\definecolor{col1}{RGB}{100,143,255}
\definecolor{col2}{RGB}{120, 94, 240}
\definecolor{col3}{RGB}{254,97,0}
\definecolor{col4}{RGB}{220, 38, 127}
\definecolor{col5}{RGB}{255, 176, 0}
\newcommand{\Z}{{\mathbb{Z}}}
\newcommand{\Q}{{\mathbb{Q}}}
\newcommand{\R}{{\mathbb{R}}}
\newcommand{\C}{{\mathbb{C}}}
\DeclareMathOperator{\SG}{{\mathbb S}}
\newcommand{\Aut}{{\operatorname{Aut}}}
\newcommand{\Out}{{\operatorname{Out}}}
\newcommand{\GL}{{\operatorname{GL}}}
\newcommand{\sign}{{\operatorname{sign}}}
\renewcommand{\Re}{\mathrm{Re}}
\newcommand{\iso}{\cong}
\newcommand{\parts}{\vdash}
\newcommand{\dd}{\mathrm{d}}
\newcommand{\wt}[1]{\widetilde #1}
\newcommand{\bigO}{\mathcal{O}}
\newcommand{\GC}{\mathcal{GC}}
\newcommand{\AGC}{\mathcal{AGC}}
\newcommand{\im}{\operatorname{im}}
\newcommand{\Gr}{\mathrm{Gr}}
\newcommand{\M}{\mathcal{M}}
\DeclareMathOperator{\rank}{{rank}}
\newtheorem{proposition}{Proposition}[section]
\newtheorem{definition}[proposition]{Definition}
\newtheorem{theorem}[proposition]{Theorem}
\newtheorem*{theorem*}{Theorem}
\newtheorem{lemma}[proposition]{Lemma}
 \newtheorem*{formula}{Formula}
\newtheorem{corollary}[proposition]{Corollary}
\theoremstyle{remark}
\title[On the Euler characteristic of the commutative graph complex]{On the Euler characteristic of the commutative graph~complex and the top weight~cohomology~of~$\mathcal M_g$}
\author{Michael Borinsky
\\ \\
with an appendix by Don Zagier
}
\address{
\parbox{\linewidth}{
Michael Borinsky\\
ETH Z\"urich --- Institute for Theoretical Studies\\
Clausiusstrasse 47, 8006 Zürich, Switzerland
}
}
\address{
\parbox{\linewidth}{
Don Zagier\\
Max-Planck-Institut f\"ur Mathematik\\
Vivatsgasse 7, 53111 Bonn, Germany\\
---and--- \\
International Centre for Theoretical Physics\\
Str. Costiera, 11, 34151 Trieste TS, Italy
}
}
\begin{document}

\begin{abstract}
We prove an asymptotic formula for the Euler characteristic of Kontsevich's commutative graph complex.  This formula implies that the total amount of commutative graph homology grows super-exponentially with the rank and, via a theorem of Chan, Galatius, and Payne, that the dimension of the top weight cohomology of the moduli space of curves, $\mathcal M_g$, grows super-exponentially with the genus $g$.
\end{abstract}

\maketitle

\section{Introduction}
\label{sec:intro}

A \emph{graph} $G$ is a one-dimensional CW complex. It has \emph{rank} $g$ if its fundamental group is a free group of rank $g$, $\pi_1(G) \iso F_g$, and we call it \emph{admissible} if it has no vertices of degree $0$, $1$, and $2$. 
A graph is \emph{even-orientable} if it has no automorphism inducing an odd permutation on its edges.

Kontsevich defined various chain complexes that are generated by families of graphs~\cite{Ko93}. 
The arguably simplest of these \emph{graph complexes}, $\mathcal{GC}_+^g$, is freely generated over $\Q$ by all isomorphism classes of connected, admissible, and even-orientable graphs of rank $g$. Each generator is oriented by fixing an ordering of the graph's edges. The $k$-chains in the subspace $C_k(\GC_+^g) \subset \GC_+^g$ are rational linear combinations of such graphs with exactly $k$ edges and the boundary maps, 
\begin{align*} \partial_k\;:\;C_k(\GC_+^g) \;\rightarrow\; C_{k-1}(\GC_+^g)\,, \quad G \mapsto \sum \nolimits_{e} \pm\, G/e\,, \end{align*}
map a graph generator to the alternating sum of its edge contractions. The sign in front of $G/e$ is the product of the parity of $e$ in the ordering of $G$'s edges and the sign of the permutation that maps $G$'s edge ordering to $G/e$'s. 
\emph{Even commutative graph homology} is the quotient 
$$H_k(\GC_+^g) \;=\; \ker \partial_k\,/\im \partial_{k+1}.$$
It has various applications, such as deformation quantization \cite{MR1480721,MR3418532}, the group cohomology of automorphism groups of free groups \cite{CoVo}, 
Drinfeld's theory of quasi-Hopf algebras~\cite{Willwacher},  and the cohomology of moduli spaces of curves \cite{CGP} (see \S\ref{sec:MG}).

The Euler characteristic of such a graph complex is the alternating 
sum of its Betti numbers,
$\chi(\GC_+^g) = \sum_k (-1)^k \dim H_k(\GC_+^g)=\sum_k (-1)^k \dim C_k(\GC_+^g)$. As the dimension of the chain space $C_k(\GC_+^g)$ is the number of isomorphism classes of connected, admissible, and even-orientable graphs of rank $g$ with $k$ edges, the computation of $\chi(\GC_+^g)$ is the combinatorial problem of counting such graphs and taking the alternating sum of their numbers over $k$.

For two sequences $a_1,a_2,\ldots$ and $b_1,b_2,\ldots$ the asymptotic notation `$a_g \sim b_g$ as $g\rightarrow \infty$' means that $\lim_{g\rightarrow \infty} a_g/b_g =1.$ The main result of this article is an asymptotic expression for $\chi(\GC_+^g)$:
\begin{theorem}
\label{thm:eulerGCeven}
The Euler characteristic, $\chi(\GC_+^g) = \sum_k (-1)^k \dim H_k(\GC_+^g)$, behaves as
\begin{align*} \chi(\GC_+^g) \;\sim \; \begin{cases} (-1)^{\frac{g}{2}} \, \sqrt{\frac{8\pi}{ g^3}}\, \left( \frac{g}{2\pi e} \right)^{g} &\text{ if $g$ is even} \\
\frac{\sqrt{2}}{g} \, \cos\left(\sqrt{ \frac{\pi g}{4} } - \frac{\pi g}{4}- \frac{\pi}{8} \right) \,e^{\sqrt{ \frac{\pi g}{4} } }\, \left(\frac{g}{2\pi e}\right)^{\frac{g}{2}} &\text{ if $g$ is odd} \end{cases} \qquad \text{as } g \rightarrow \infty\,. \end{align*}
\end{theorem}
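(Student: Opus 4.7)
The plan is to first rewrite $\chi(\GC_+^g)$ as a sign-weighted sum over isomorphism classes of pairs (graph, automorphism), then encode that weighted sum as the logarithm of an explicit generating function of zero-dimensional Feynman-integral type, and finally extract the $g\to\infty$ asymptotics by the saddle-point method.

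Step 1. Using the standard identity $\sum_{\phi\in\Aut(G)}\sign(\phi|_E) = |\Aut(G)|$ if $G$ is even-orientable and $0$ otherwise, one obtains
\begin{align*}
\chi(\GC_+^g) \;=\; \sum_G \frac{(-1)^{|E(G)|}}{|\Aut(G)|} \sum_{\phi\in\Aut(G)} \sign(\phi|_E),
\end{align*}
where $G$ ranges over isomorphism classes of connected, admissible, rank-$g$ graphs. Reorganising this as a sum over isomorphism classes of pairs $(G,\phi)$ lets one pass to the quotient orbigraph $G/\langle\phi\rangle$ decorated with monodromy data, turning the problem into one of counting a combinatorially tractable decorated object with an explicit local weight.

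Step 2. I would then express $\sum_g \chi(\GC_+^g)\, \hbar^{g-1}$ as $\log Z(\hbar)$ for an explicit formal integral $Z(\hbar)$, in the style of zero-dimensional quantum field theory. The propagator and vertex data are chosen so that (i) only vertices of valence at least three contribute (admissibility), (ii) each edge carries the sign $-1$ (producing $(-1)^{|E|}$), and (iii) the automorphism sign on edges is implemented either through an auxiliary fermionic variable or, equivalently, by replacing the Gaussian edge pairing by a Pfaffian-type density. The outcome should be an analytic expression for $Z(\hbar)$ involving elementary or Bessel-type transcendentals, from which the individual coefficients $\chi(\GC_+^g)$ can be extracted by Cauchy's formula.

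Step 3. Given such an integral representation, I would apply Laplace/saddle-point analysis in the regime $g\to\infty$. The parity split in the theorem is a clear fingerprint of a potential with two competing critical points: a real saddle dominates for even $g$ and yields the monotone $(g/2\pi e)^g$ growth (picking up the $(-1)^{g/2}$ phase from the chosen contour through the saddle), while for odd $g$ the dominant contribution comes from a complex conjugate pair of saddles whose action has imaginary part of order $\sqrt{g}$, producing the oscillatory factor $\cos\!\bigl(\sqrt{\pi g/4}-\pi g/4-\pi/8\bigr)\,e^{\sqrt{\pi g/4}}$. The precise prefactors then follow from standard Stirling- and Bessel-asymptotic expansions.

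The genuinely difficult step is Step 2: identifying the exact formal integral whose perturbative expansion reproduces the doubly signed graph sum with connectedness, admissibility and even-orientability all correctly encoded. Once this integral representation is secured, the parity dichotomy and the explicit prefactors in the statement follow by routine---if delicate---saddle-point bookkeeping.
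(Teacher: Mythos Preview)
Your Steps 1 and 2 are broadly aligned with the paper, though the actual construction is more specific than ``an auxiliary fermionic variable or Pfaffian-type density.'' The partition function is built with one integration variable $x_k$ per cycle length $k$ of the automorphism $\phi$ (an equivariant Wick theorem, Proposition~\ref{prop:wick}), and the Laplace method is applied not to extract the coefficient of $\hbar^{g-1}$ directly, but to evaluate these integrals and obtain an explicit closed-form generating function (Theorem~\ref{thm:genfun}). The output is a sum $\sum_{k,\ell}\frac{\mu(\ell)}{\ell}\Psi_k^\pm(z^\ell)$ whose dominant pieces involve $\sum_m \frac{B_{m+1}}{m(m+1)}M_k(z)^{-m}$; the Bernoulli numbers enter through the Stirling expansion of $\Gamma(M_k(z))$.

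Your Step~3 misidentifies the mechanism behind the parity split. It is \emph{not} that a real saddle dominates for even $g$ and a complex pair for odd $g$ in a single contour integral for the coefficient. Rather, after the generating function is established, one finds (Proposition~\ref{prop:GC_np1}) that $\chi(\GC_+^{n+1})\approx \xi_{n,1}^++\xi_{n,2}^+$, where $\xi_{n,k}^+$ is the $z^{-n}$-coefficient of the $k$-th Bernoulli sum. For $k=1$ one has $M_1(z)=z$, so $\xi_{n,1}^+=\mp B_{n+1}/(n(n+1))$, which \emph{vanishes identically for even $n$} because $B_{2m+1}=0$. Thus for even $g$ (odd $n$) the $k=1$ term dominates and gives the $(g/2\pi e)^g$ growth directly via Euler's formula for $B_{2m}$; for odd $g$ (even $n$) one is forced down to the subleading $k=2$ term with $M_2(z)=\tfrac12(z^2-z)$, whose coefficients are the genuinely delicate part.

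The oscillation in the odd-$g$ case does ultimately come from complex saddles, but not in $Z(\hbar)$: one rewrites $\xi_{2n,2}^+$ as a finite sum $\sum_\ell A^\ell(n+\ell-1)!/(2\ell)!$ with $A=-i\pi$ (Proposition~\ref{prop:xi22asy}), expresses this via $\Gamma$-integrals, and applies a shifted Laplace analysis (Lemmas~\ref{lmm:sumA} and~\ref{lmm:shiftGaus}) to an integral of the form $\int e^{-n(e^y-1-y)+\sqrt{An}(e^{y/2}-1)}\,dy$. So your intuition that complex critical points produce the $\cos$ and $e^{\sqrt{\pi g/4}}$ is correct, but they live in this derived auxiliary integral, not in the original partition function. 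Without first isolating the Bernoulli structure and the $k=1$ versus $k=2$ dichotomy, a direct saddle-point attack on $Z(\hbar)$ via Cauchy's formula would not reveal why the two parities behave so differently.
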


This asymptotic behavior of the Euler characteristic implies lower bounds for the total amount of even commutative graph homology, $H(\GC_+^g) = \bigoplus_k H_k(\GC_+^g):$
\begin{theorem} 
\label{thm:HGCbound}
For every positive $C < (2\pi e)^{-1}= 0.0585...,$
and all but finitely many $g \geq 2$, 
\begin{align*} \dim H(\GC_+^g) \;>\; \begin{cases} (Cg)^g & \text{ if $g$ is even} \\
(Cg)^{g/2} & \text{ if $g$ is odd} \end{cases} \end{align*}
\end{theorem}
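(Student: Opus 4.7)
My plan is to deduce Theorem~\ref{thm:HGCbound} directly from the asymptotic in Theorem~\ref{thm:eulerGCeven} via the elementary bound
\[
    \dim H(\GC_+^g) = \sum_k \dim H_k(\GC_+^g) \geq \Bigl| \sum_k (-1)^k \dim H_k(\GC_+^g) \Bigr| = |\chi(\GC_+^g)|,
\]
so the task reduces to showing that $(Cg)^g$ (respectively $(Cg)^{g/2}$) is eventually dominated by $|\chi(\GC_+^g)|$.

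For even $g$, I would factor the candidate lower bound as $(Cg)^g = (2\pi eC)^g \cdot (g/(2\pi e))^g$. Theorem~\ref{thm:eulerGCeven} gives $|\chi(\GC_+^g)| \sim \sqrt{8\pi/g^3}\,(g/(2\pi e))^g$, so the ratio $(Cg)^g / |\chi(\GC_+^g)|$ is, up to polynomial factors in $g$, of order $(2\pi eC)^g$. Since $2\pi eC < 1$ by hypothesis, this decays geometrically to $0$, giving the bound for all but finitely many even $g$.

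For odd $g$, the analogous rewriting reduces the desired inequality to
\[
    \tfrac{\sqrt{2}}{g}\,|\cos \theta_g|\,e^{\sqrt{\pi g/4}} > (2\pi eC)^{g/2}, \qquad \theta_g = \sqrt{\pi g/4} - \tfrac{\pi g}{4} - \tfrac{\pi}{8}.
\]
The right-hand side still decays geometrically, and $e^{\sqrt{\pi g/4}}$ grows super-polynomially, so the inequality holds provided $|\cos \theta_g|$ is not super-polynomially small. Controlling this oscillating factor is the main obstacle. I would handle it with a Diophantine argument: $|\cos \theta_g| < \epsilon$ forces $\sqrt{g/(4\pi)}$ to lie within $O(\epsilon)$ of a rational of denominator dividing $8$, whence squaring yields $|16g - \pi k^2| = O(\epsilon \sqrt{g})$ for some integer $k$. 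Any effective irrationality measure bound for $\pi$ (for instance Salikhov's $\mu(\pi) \leq 7.61$) then forces $|\cos \theta_g| \gg g^{-K}$ for some fixed $K$ and all but finitely many odd $g$, which comfortably dominates the geometric right-hand side. The finitely many remaining small $g$ are absorbed into the ``finitely many exceptions'' allowed by the statement.
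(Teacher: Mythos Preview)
Your proposal is correct and follows essentially the same route as the paper: the trivial bound $\dim H(\GC_+^g)\ge |\chi(\GC_+^g)|$, a direct comparison for even $g$, and for odd $g$ a Diophantine lower bound on $|\cos\theta_g|$ coming from an effective irrationality measure for $\pi$. The paper packages the odd case as Proposition~\ref{prop:cos}, proving $|\cos\theta_g|\ge g^{1/2-\mu^*}$ via the same ``square $\sqrt{\pi g/4}\approx \pi k/8$ to get $|\pi-16g/k^2|$ small'' manoeuvre you sketch; any finite bound on $\mu(\pi)$ (Mahler, Salikhov, or Zeilberger--Zudilin) suffices, since only a polynomial lower bound is needed against the geometrically decaying $(2\pi e C)^{g/2}$.
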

In the even $g$ case, the estimate above follows directly from Theorem~\ref{thm:eulerGCeven}. 
The odd $g$ case is nontrivial and relies on a bound for the \emph{irrationality measure} of $\pi$.
By Theorem~\ref{thm:HGCbound}, the number of homology classes in $\GC_+^g$ grows \emph{super-exponentially} with $g$. Because of the wildly alternating signs in Theorem~\ref{thm:eulerGCeven}, many of these classes must have odd degrees. 
Both results refute the expectation formulated in \cite[\S7.B]{Ko93} that the \emph{orbifold} and the classical Euler characteristic behave similarly for large $g$, as the former vanishes for all odd $g$ (see also \cite[\S3]{BVquantum}).

The homology $H_k(\GC_+^g)$ vanishes outside the degree range $2g \leq k \leq 3g-3$. The upper bound is due to the lack of admissible graphs of rank $g$ with more than $3g-3$ edges. 
The lower bound was proven by Willwacher, who also showed that 
the bottom degree homology $H_{2g}(\GC_+^g)$ is isomorphic to the degree $g$ subspace of the \emph{Grothendieck--Teichm\"uller Lie algebra}, $\mathfrak{grt}_1$ \cite{Willwacher}. It is a theorem of Brown that the free Lie algebra with one generator in each odd degree $\geq 3$ injects into $\mathfrak{grt}_1$ \cite{MR2993755}. 
Counting the number of generators of this free Lie algebra interpreted as a graded vector space shows that at least an exponentially growing amount of classes is concentrated in the bottom degree $2g$ of $H(\GC_+^g)$.

Khoroshkin, Willwacher, and {\v Z}ivkovi\'c proved that even graph homology, summed over all ranks and degrees larger than the bottom degree $2g$, i.e.,~$\bigoplus_{g\geq 2,k>2g} H_k(\GC_+^g)$, is infinite-dimensio\-nal \cite[Cor.~1]{MR3590540}. Theorems~\ref{thm:eulerGCeven} and \ref{thm:HGCbound} strengthen this result.  For instance, they directly imply that the dimension of $\bigoplus_{k} H^{2k+1}(\GC_+^{4g+2})$ grows super-exponentially with $g$.
Also, Theorems~\ref{thm:eulerGCeven} and \ref{thm:HGCbound} answer a related question posed after \cite[Cor.~6]{MR3590540} in the negative. 
Moreover, Theorem~\ref{thm:HGCbound} (negatively) answers Brown's question of whether a specific map into $H(\GC_+^g)$ is an isomorphism \cite[Conj.~2.5 and \S2.11]{MR4343257}, as the domain of this map is not large enough to match the super-exponentially growing dimension of the image.
Further, Theorem~\ref{thm:HGCbound} implies the same bound on the dimension of the top weight cohomology of the \emph{handlebody group} \cite{hainaut2023top}.

\subsection{Implications on the cohomology of \texorpdfstring{$\M_g$}{Mg}}
\label{sec:MG}

Due to a recent result by Chan, Galatius, and Payne ~\cite{CGP}, Theorems~\ref{thm:eulerGCeven} and~\ref{thm:HGCbound} provide new information on the cohomology of the moduli space of curves $\M_g.$
According to Deligne's theory of mixed Hodge structures, the cohomology of $\M_g$ admits a \emph{weight grading} in addition to its grading by degree.  The $j$-th weight-graded piece is the subspace $\Gr^W_{j} H^{k}(\M_g;\Q) \subset H^{k}(\M_g;\Q).$
Chan, Galatius, and Payne's result establishes an isomorphism between the \emph{top weight} (i.e.,~the $(6g-6)$-th weight-graded piece of the) cohomology of $\M_g$ and the homology of $\GC_+^g.$
Translated to our degree conventions, they proved 
$\Gr^W_{6g-6} H^k(\M_g;\Q) \iso H_{6g-6-k}(\GC^g_+)$
{\cite[Thms.~1.2 and 1.3]{CGP}}.
This result shed new light on an older problem: In 1986, Harer and Zagier~\cite{HaZa} proved a formula for the Euler characteristic of $\M_g$, $\chi(\M_g)=\sum_k (-1)^k \dim H^k(\M_g;\Q),$ which implies
an inequality of the shape
$\dim H(\M_g;\Q) > ([\text{positive constant}]\cdot g)^{2g}$
for all $g \geq 2$.
It is an open problem to explain the nature of this super-exponentially growing amount of classes. 
 The only previously known family, the \emph{tautological classes},
grows less than exponentially with $g$.
Chan, Galatius, and Payne combined their insight on the top weight cohomology of $\M_g$ with Brown and Willwacher's theorems on $\GC_+$ and $\mathfrak{grt}_1$ to prove the existence of another \emph{exponentially growing} family of classes in the next-to-top degree $H^{4g-6}(\M_g;\Q)$ \cite[Thms.~1.1 and 2.7]{CGP}.

Theorems~\ref{thm:eulerGCeven} and~\ref{thm:HGCbound}
add a positive and a negative twist to this story:
Together with Chan, Galatius, and Payne's result, Theorem~\ref{thm:HGCbound} implies that the dimension of the top weight cohomology $\Gr^W_{6g-6} H^\bullet(\M_g;\Q)$ grows super-exponentially with $g$:
\begin{theorem}
The \emph{top weight Euler characteristic} $\sum_k (-1)^k \dim \Gr^W_{6g-6} H^k(\M_g;\Q)$ exhibits the same large $g$ asymptotic behavior as $\chi(\GC_+^g)$ in Theorem~\ref{thm:eulerGCeven} and $\dim \Gr^W_{6g-6} H(\M_g;\Q)$ respects the same lower bounds as $\dim H(\GC_+^g)$ in Theorem~\ref{thm:HGCbound}.
\end{theorem}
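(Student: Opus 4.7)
The plan is to deduce both claims directly from the Chan--Galatius--Payne isomorphism $\Gr^W_{6g-6} H^k(\M_g;\Q) \simeq H_{6g-6-k}(\GC^g_+)$ recalled above, together with Theorems~\ref{thm:eulerGCeven} and \ref{thm:HGCbound}.

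First I would handle the Euler characteristic. Applying the CGP isomorphism degree by degree and reindexing by $j = 6g-6-k$, one obtains
\begin{align*}
\sum_k (-1)^k \dim \Gr^W_{6g-6} H^k(\M_g;\Q)
= \sum_k (-1)^k \dim H_{6g-6-k}(\GC^g_+)
= \sum_j (-1)^{6g-6-j} \dim H_j(\GC^g_+).
\end{align*}
Since $6g-6$ is even for every integer $g$, the sign $(-1)^{6g-6-j}$ reduces to $(-1)^j$, so the sum equals $\chi(\GC_+^g)$ on the nose. Theorem~\ref{thm:eulerGCeven} then yields the claimed asymptotic behavior.

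Next I would handle the total dimension. Summing the CGP isomorphism over all $k$ gives
\begin{align*}
\dim \Gr^W_{6g-6} H(\M_g;\Q)
= \sum_k \dim \Gr^W_{6g-6} H^k(\M_g;\Q)
= \sum_k \dim H_{6g-6-k}(\GC_+^g)
= \dim H(\GC_+^g),
\end{align*}
since reindexing $k \mapsto 6g-6-k$ is a bijection on $\Z$. The bounds from Theorem~\ref{thm:HGCbound} therefore transfer verbatim.

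There is no real obstacle here beyond correctly tracking the parity of the shift $6g-6$, which is the only potentially delicate point: if it had been odd one would pick up an overall sign $(-1)^g$ that might have interfered with the odd-$g$ asymptotics of Theorem~\ref{thm:eulerGCeven}. Because $6g-6$ is always even, no such complication arises and the theorem follows immediately.
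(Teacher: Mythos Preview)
Your proposal is correct and matches the paper's approach: the theorem is stated as an immediate consequence of the Chan--Galatius--Payne isomorphism together with Theorems~\ref{thm:eulerGCeven} and \ref{thm:HGCbound}, and the paper gives no further argument. Your explicit tracking of the parity of the shift $6g-6$ is a helpful detail that the paper leaves implicit.
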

So, potentially, Chan, Galatius, and Payne's result can explain many more classes than the exponentially growing family in degree $4g-6$.
However, the growth rate of the Euler characteristic of $\GC_+^g$, which is dominated by a $g^g$ or $g^{\frac{g}{2}}$ term, is insufficient to explain the even larger growth rate of the Euler characteristic of $\M_g$, which is dominated by a $g^{2g}$ term.
Further, as the Euler characteristic is additive over direct sums, 
 Theorem~\ref{thm:eulerGCeven}, in combination with the Chan--Galatius--Payne and Harer--Zagier results, implies that the total dimension of the rational, non-top weight cohomology of $\M_g$, $\bigoplus_{j=0}^{6g-8} \Gr_j^W H(\M_g;\Q)$, also  grows super-exponentially with $g$.

\subsection{The odd commutative graph complex}
Kontsevich defined his graph complexes in two flavors:
The \emph{odd} and the \emph{even}.\footnote{
In the original article \cite{Ko93}, the even/odd nomenclature was used in the opposite way. We agree with most recent publications that use the present convention.
}
An \emph{odd orientation} of a graph $G$ is given by an ordering of its edges together with
bases for its zeroth and first homology $H_0(G,\Z)$ and $H_1(G,\Z)$.
A given automorphism $\alpha$ of a graph induces a permutation $\alpha_E$ on $G$'s set of edges 
and automorphisms $\alpha_{H_0} : H_0(G,\Z) \rightarrow H_0(G,\Z)$, $\alpha_{H_1} : H_1(G,\Z) \rightarrow H_1(G,\Z)$.
A graph is \emph{odd-orientable} if it has no automorphism $\alpha$ 
for which $\sign(\alpha_E) \cdot \det(\alpha_{H_0}) \cdot \det(\alpha_{H_1}) = -1$.
The \emph{odd commutative graph complex}, $\GC_-^g$, is defined analogously to the even graph complex described above.
Here, generators are graded by the number of vertices instead of the edge number.
So, connected, odd-oriented graphs of genus $g$ with $k$ vertices span the space of $k$-chains $C_k(\GC_-^g)$.
 The boundary maps, $\partial_k: C_k(\GC_-^g) \rightarrow C_{k-1}(\GC_-^g)$ are again defined on generators by $G \mapsto \sum \nolimits_{e} \pm G/e$, where  the sign in front of $G/e$ is the parity of $e$ in the edge ordering times the sign of the permutation that translates from $G$'s edge ordering to $G/e$'s and the determinants of the canonical isomorphisms that map the bases of $H_0(G,\Z)$ and $H_1(G,\Z)$ 
to the chosen bases of $H_0(G/e,\Z)$ and $H_1(G/e,\Z)$.
Odd commutative graph homology $H_k(\GC_-^g)$ has support in the degree range $1 \leq k \leq 2g-2$,
as admissible graphs at rank $g$ have at most $2g-2$ vertices.

We will prove that the Euler characteristic of the odd commutative complex, $\chi(\GC_-^g) = \sum_k (-1)^k \dim H_k(\GC_-^g)$, behaves similarly to the even one up to a sign:
\begin{theorem}
\label{thm:eulerGCodd}
$\chi(\GC_-^g) \sim - \chi(\GC_+^g)$ as $g \rightarrow \infty$.
Moreover, $\dim H(\GC^g_-)$ respects the same lower bounds as $\dim H(\GC^g_+)$ in Theorem~\ref{thm:HGCbound}.
\end{theorem}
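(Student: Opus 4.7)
The plan is to adapt the combinatorial and asymptotic machinery behind Theorem~\ref{thm:eulerGCeven} to the odd complex; the only new ingredient is the orientation character $\det(\alpha_{H_1})$.

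Applying Burnside to the $1$-dimensional orientation representation of $\Aut(G)$, both orientability indicators become sign-character averages over $\Aut(G)$. Since $\det(\alpha_{H_0})=1$ for a connected graph, one has $[G \text{ odd-orient.}] = \tfrac{1}{|\Aut(G)|}\sum_{\alpha} \sign(\alpha_E)\det(\alpha_{H_1})$, and rewriting the vertex grading via $v(G)=e(G)-g+1$ yields
\begin{align*}
\chi(\GC_-^g) \;=\; (-1)^{g+1} \sum_{[G]} \frac{(-1)^{e(G)}}{|\Aut(G)|} \sum_{\alpha \in \Aut(G)} \sign(\alpha_E)\,\det(\alpha_{H_1}),
\end{align*}
which is structurally identical to the Burnside expression for $\chi(\GC_+^g)$, except for the explicit prefactor $(-1)^{g+1}$ and the insertion of $\det(\alpha_{H_1})$ inside the inner sum.

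By multiplicativity of determinants in the cellular exact sequence $0 \to H_1 \to C_1 \to C_0 \to H_0 \to 0$ of a connected graph, $\det(\alpha_{H_1})$ equals the ratio of $\det(\alpha|_{C_1})$ and $\det(\alpha|_{C_0})$, and is therefore expressible purely in terms of $\sign(\alpha_E)$, $\sign(\alpha_V)$, and the number of edges whose chosen orientation is reversed by $\alpha$. Substituting this reduces the odd Euler characteristic to a modification of the same weighted graph sum whose asymptotics are analyzed in the proof of Theorem~\ref{thm:eulerGCeven}.

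One now feeds both sums into the generating-function framework used for Theorem~\ref{thm:eulerGCeven}. The task is to verify that the insertion of $\det(\alpha_{H_1})$, combined with the explicit $(-1)^{g+1}$, multiplies the dominant asymptotic by $-1$: for even $g$, where the $g^g$-order behavior is driven by the identity-automorphism contribution $\sum_{[G]} (-1)^{e(G)}/|\Aut(G)|$, the matching reduces to sign bookkeeping; for odd $g$, the oscillatory $g^{g/2}$ behavior arises from specific low-order automorphisms, and one has to check the sign agreement on each dominant conjugacy class. The latter is the main technical obstacle, as it requires tracking how $\det(\alpha_{H_1})$ acts on the specific saddle contributions that produce the oscillatory asymptotic in Theorem~\ref{thm:eulerGCeven}. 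With the asymptotic $\chi(\GC_-^g) \sim -\chi(\GC_+^g)$ established, the lower bound on $\dim H(\GC_-^g)$ follows verbatim from the proof of Theorem~\ref{thm:HGCbound}, the odd-$g$ bound again relying on the irrationality-measure estimate for $\pi$.
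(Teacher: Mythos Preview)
Your starting point is correct and matches the paper: the cellular exact sequence identity you quote is exactly Lemma~\ref{lmm:OddOrient}, and combined with Burnside it gives the odd-orientability indicator as an average of $\sign(\alpha)\sign(\alpha_V)$ (equivalently $\sign(\alpha_E)\det(\alpha_{H_0})\det(\alpha_{H_1})$). So the combinatorial reduction is right.

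But from there the proposal stops being a proof. You do not actually establish any generating function for $\chi(\GC_-^g)$, and the analytic step you propose --- identifying the ``identity-automorphism contribution'' as the source of the $g^g$ term and ``specific low-order automorphisms'' as the source of the oscillatory $g^{g/2}$ term, then checking signs class by class --- is neither carried out nor the route the paper takes. The paper never isolates dominant conjugacy classes. Instead, the sign change you are trying to track is absorbed much earlier, at the level of the partition-function machinery: the action $\mathcal S_n^-$ and the measure $\mu_n^-$ are set up in parallel with $\mathcal S_n^+$, $\mu_n^+$ so that the whole asymptotic evaluation (Sections~\ref{sec:partition}--\ref{sec:matching}) runs for both cases simultaneously and produces the unified formula of Theorem~\ref{thm:genfun}. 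Once that formula is in hand, the Bernoulli-number part of $\Psi_k^\pm$ in~\eqref{eq:PsiDef} carries an explicit $\mp$, so the dominant coefficients satisfy $\xi^-_{n,k}=-\xi^+_{n,k}$ by definition; see~\eqref{eq:xidef}. The asymptotic equivalence $\chi(\GC_-^g)\sim -\chi(\GC_+^g)$ is then immediate from Proposition~\ref{prop:GC_np1} together with Propositions~\ref{prop:oddxi} and~\ref{prop:xi22asy}, with no saddle-point sign bookkeeping required.

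In short: you have correctly identified the combinatorial input (Lemma~\ref{lmm:OddOrient}), but the actual argument is structural rather than class-by-class, and your proposal leaves the decisive step --- either your version or the paper's --- undone.
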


The odd commutative graph complex was studied extensively due to its role in knot theory and finite type invariants (see, e.g.,~\cite{bar1995vassiliev}). Many classes are known in the top-degree $2g-2$ due to a relation to Chern--Simons theory. It is conjectured that odd commutative graph cohomology in this degree forms a specific commutative algebra \cite{vogel2011algebraic}. 

Aside from showing that the total amount of odd graph homology grows rapidly with $g$, Theorem~\ref{thm:eulerGCodd} explains an observation in \cite{MR3303240} on the similarity of the Euler characteristics of $\GC_+^g$ and $\GC_-^g$: Asymptotically, they are equal up to a sign.

In many articles, $\GC_+$ is denoted as $\GC_0$, or it appears as the, up to degree-shifts equivalent, $\GC_2$. Similarly, $\GC_-$ corresponds to $\GC_1$ in the conventional notation. Here, we use the $\pm$-notation to avoid repetition, as our statements on $\GC_+$ and $\GC_-$ often only differ by signs.

\subsection{Generating functions for commutative graph complex Euler characteristics}
\label{sec:introgenfun}
We will prove Theorems~\ref{thm:eulerGCeven} and \ref{thm:eulerGCodd} after establishing \emph{generating functions} for  
$\chi(\GC_+^g)$ and $\chi(\GC_-^g)$. 

Let $\mu$ denote the \emph{Möbius function}, defined recursively by
$\mu(1) = 1$ and $0=\sum_{d|m} \mu(d)$ for all integers $m\geq 2$.
For any integer $k \geq 1$, we write $M_k(z) = \frac{1}{k} \sum_{d|k} \mu(d) z^{k/d}$, $L_k(z) = \log( k M_k(z)/ z^k )$
and $\delta_{2|k}=(1+(-1)^k)/2$. 
The \emph{Bernoulli numbers}, $B_m$, are defined by 
$ {x}/({e^x-1}) = \sum_{m=0}^\infty {B_m}\frac{x^m}{m!}$.

Each member of the following family of power series is meant to be expanded in terms of $1/z$,
\begin{align} \label{eq:PsiDef} \Psi_k^\pm(z) \;=\; \pm \left( (1-L_k(z))\, M_k(z) \;-\; \frac{z^k}{k} \;+\; \frac{\delta_{2|k}}{2k} \right) \;-\;\frac12 \,L_k(z) \;\mp\; \sum_{m =1}^\infty \frac{B_{m+1}}{m(m+1)}\, \frac{1}{M_k(z)^{m}}\,. \end{align}

We write $\lceil q \rceil$ for the smallest integer $\geq q$.
\begin{theorem}
\label{thm:genfun}
For all $k \geq 1$, we have  $\Psi_k^\pm(z) \in z^{-\lceil k/6 \rceil}\Q[[\frac{1}{z}]]$ and the identity in $\Q[[\frac{1}{z}]]$ holds,
$$
\sum_{g \geq 2} \,\chi(\mathcal{GC}_\pm^g)\, z^{1-g}
\;=\;\sum_{k,\ell \geq 1}\, \frac{\mu(\ell)}{\ell} \,\Psi^\pm_k(z^\ell)\,.
$$
\end{theorem}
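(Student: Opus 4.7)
The plan is to express $\chi(\GC_\pm^g)$ as a weighted count over pairs $(G,\sigma)$ with $\sigma\in\Aut(G)$, reorganize by the cycle type of $\sigma$, and extract the stated double sum via plethystic/Möbius inversion. First, I use the identity $\frac{1}{|\Aut G|}\sum_{\alpha\in\Aut G}\sign(\alpha_E)=[G\text{ is even-orientable}]$, and its odd-case analogue with the extra factor $\det(\alpha_{H_0})\det(\alpha_{H_1})$, to rewrite $\chi(\GC_\pm^g)$ as a signed sum over isomorphism classes of connected admissible graphs of rank $g$. Because $(-1)^{e(G)}z^{v(G)-e(G)}$ is multiplicative under disjoint union, the exponential formula for species yields
\begin{align*}
\exp\!\Big(\sum_{g\geq 2}\chi(\GC_\pm^g)\,z^{1-g}\Big)=\sum_{[G]\text{ adm.}}\frac{(-1)^{e(G)}z^{v(G)-e(G)}}{|\Aut G|}\sum_{\alpha\in\Aut G}w_\pm(\alpha),
\end{align*}
where the sum is now over possibly disconnected admissible graphs and $w_\pm(\alpha)$ is $\sign(\alpha_E)$, optionally multiplied by the determinants on $H_0, H_1$.

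The right-hand side is then unlabeled via Burnside's lemma into a sum over pairs $(\sigma,G)$, with $\sigma\in\Sym_n$ and $G$ a $\sigma$-invariant labeled graph on $[n]$, divided by $n!$. For fixed $\sigma$, the action on the set of potential edges decomposes into orbits whose sizes depend only on $\sigma$'s cycle type; a $\sigma$-invariant graph is a choice of edge multiplicity per orbit, and all weights---including $\sign(\sigma_E)$, the admissibility constraint (local at each vertex orbit), and in the odd case the $H_0, H_1$ determinants---factor over these orbits. Summing the per-orbit multiplicity series then produces, for each cycle length $k$ of $\sigma$, a $\log\Gamma$-type factor whose argument is $M_k(z)=\tfrac{1}{k}\sum_{d|k}\mu(d)z^{k/d}$: this primitive-necklace expression arises because counting orbits of $\sigma_E$ on edges incident to a single $k$-cycle involves exactly such Möbius-weighted sums. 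Stirling's asymptotic series
\begin{align*}
\log\Gamma(x+1)=\Big(x+\tfrac12\Big)\log x-x+\tfrac12\log(2\pi)+\sum_{m\geq 1}\frac{B_{m+1}}{m(m+1)\,x^m}
\end{align*}
applied at $x=M_k(z)$ then produces the Bernoulli tail of $\Psi_k^\pm$ in \eqref{eq:PsiDef}, with $L_k(z)=\log(kM_k(z)/z^k)$ absorbing the remaining logarithmic corrections. Finally, taking $\log$ of both sides to undo the exponential formula and applying the standard Möbius inversion that separates cyclic contributions into their aperiodic parts yields the double sum $\sum_{k,\ell\geq 1}\frac{\mu(\ell)}{\ell}\Psi_k^\pm(z^\ell)$. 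The bound $\Psi_k^\pm(z)\in z^{-\lceil k/6\rceil}\Q[[1/z]]$ follows from the admissibility estimate $v\leq 2e/3$, which forces the minimal rank of a subgraph supported on cycles of length $k$ to grow at least like $k/6$.

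The main obstacle is the algebraic identification in the per-$k$ contribution. Correctly tracking the signs $\sign(\sigma_E)$ across the edge-orbit multiplicity sum, together with the admissibility condition and (in the odd case) the nonlocal factors $\det(\alpha_{H_0})\det(\alpha_{H_1})$, and then recognizing the resulting local generating function as precisely the Stirling-regularized $\log\Gamma(M_k(z)+1)$ combination of \eqref{eq:PsiDef}, is the most delicate step. The odd case is additionally subtle because $\det(\alpha_{H_1})$ depends not only on how $\sigma$ permutes the vertex orbits but also on the homology classes of the edge cycles themselves, requiring a further analysis of how each edge-orbit contributes to $H_1$.
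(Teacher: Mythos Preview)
Your first displayed equation is not correct as written. After the projection $\frac{1}{|\Aut G|}\sum_{\alpha}w_\pm(\alpha)$ picks out the orientable graphs, the right-hand side equals $\sum_{n\geq 0}\chi_n^\pm z^{-n}$; but by Proposition~\ref{prop:disc} this is $\prod_{g\geq 2}(1-z^{1-g})^{-\chi(\GC_\pm^g)}$, i.e.\ a \emph{plethystic} exponential, not the ordinary $\exp\bigl(\sum_g\chi(\GC_\pm^g)z^{1-g}\bigr)$. The discrepancy is exactly the terms $z^{m(1-g)}/m$ for $m\geq 2$, and this is what the outer sum $\sum_\ell\mu(\ell)/\ell$ in the theorem actually inverts. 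So that Möbius sum does not arise from ``separating cyclic contributions into aperiodic parts'' inside each $k$-factor; it comes from undoing the plethystic log at the very end. Your outline conflates two distinct Möbius-type steps.

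More fundamentally, the step you yourself flag as ``the main obstacle'' is the entire content of the proof, and the paper does not approach it the way you sketch. The paper never manipulates the automorphism sum combinatorially into a $\log\Gamma$ factor. Instead it builds a family of convergent $n$-dimensional integrals $I_n^\pm(z,D)$ whose large-$z$ asymptotics provably encode the $\chi_k^\pm$ via an equivariant Wick theorem (Theorem~\ref{thm:integral}), then applies the linear change of variables $y_k=\sum_\ell x_{k\ell}/\ell$ to factor $I_n^\pm$ into a product of one-dimensional integrals; it is the \emph{inverse} of this substitution, $x_k=\sum_\ell\mu(\ell)y_{k\ell}/\ell$, that produces $M_k(z)$. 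Each one-dimensional factor is then recognized, after a contour shift, as $\Gamma(M_k(z))/\bigl(\sqrt{2\pi}\,M_k(z)^{M_k(z)-1/2}e^{-M_k(z)}\bigr)$, whose genuine asymptotic expansion is Stirling's series (Lemma~\ref{lmm:stirling}). The Bernoulli tail of $\Psi_k^\pm$ therefore appears by matching two asymptotic expansions of the \emph{same analytic object}, which is what makes substituting into the divergent Stirling series legitimate. Your proposal to ``sum per-orbit multiplicity series and recognize $\log\Gamma(M_k(z)+1)$'' gives no mechanism by which a divergent asymptotic series could emerge from a convergent combinatorial sum, does not explain how the admissibility constraint (handled in the paper through the function $V$ of Eq.~\eqref{eq:defV}) enters, and in the odd case does not use the key simplification of Lemma~\ref{lmm:OddOrient}, which replaces the nonlocal $\det(\alpha_{H_0})\det(\alpha_{H_1})$ by the local $\sign(\alpha)\sign(\alpha_V)$.
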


Generating functions for the Euler characteristic of $\chi(\mathcal{GC}_\pm^g)$ were previously obtained by Willwacher and \v{Z}ivkovi\'{c} \cite{MR3303240}. Their formulas do not seem amenable to an asymptotic analysis.  Tsopm\'{e}n\'{e} and Turchin extended this formula to hold for graph complexes where the graphs are allowed to have \emph{leaves (also called legs or hairs)} \cite{MR3806571}.
Chan, Faber, Galatius, and Payne \cite{chan2019s_n} proved a formula for the $\SG_n$-equivariant top weight Euler characteristic of the moduli space of curves with $n$ marked points, $\M_{g,n}$.
This formula, which also does not seem amenable to an asymptotic analysis, was conjectured in 2008 in an unpublished letter by Zagier based on computations by Faber. This letter, which also explains the relationship between Theorem~\ref{thm:genfun} and the formula in \cite{chan2019s_n}, is adapted to the present context in Appendix~\ref{sec:appendix}. 

\subsection{Kontsevich's other graph complexes}
In addition to the commutative graph complex,
Kontsevich defined the \emph{associative} 
and the \emph{Lie} graph complex~\cite{Ko93}. 
The former is generated by oriented 
\emph{ribbon} graphs, i.e.,~graphs with a cyclic ordering attached 
to each vertex. 
Getzler and Kapranov obtained a generating function for the Euler characteristic of this graph complex \cite[Theorem~9.18]{getzler1998modular} that is remarkably similar to the formula in Theorem~\ref{thm:genfun}.
Let $\chi(\AGC^g)$ denote the Euler characteristic of the associative graph complex. 
Applying the methods developed in the present article to Getzler and Kapranov's result (see \S\ref{sec:geka}) gives
\begin{theorem}
\label{thm:eulerAGC}
$\chi(\AGC^g) \sim - \chi(\GC_+^g)$ as $g \rightarrow \infty$ and associative graph homology $\dim H(\AGC^g)$ respects the same lower bounds as $\dim H(\GC^g_+)$ in Theorem~\ref{thm:HGCbound}.
\end{theorem}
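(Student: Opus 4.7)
The plan is to parallel the proofs of Theorems~\ref{thm:eulerGCeven} and~\ref{thm:HGCbound} as closely as possible, with the Getzler--Kapranov generating function formula for $\chi(\AGC^g)$ (\cite[Thm.~9.18]{getzler1998modular}, to be revisited in Section~\ref{sec:geka}) playing the role of Theorem~\ref{thm:genfun}. First I would rewrite that formula in the same $1/z$-expansion shape $\sum_{g \geq 2} \chi(\AGC^g) z^{1-g} = \sum_{k,\ell \geq 1} \frac{\mu(\ell)}{\ell} \Psi_k^{\AGC}(z^\ell)$, with explicit series $\Psi_k^{\AGC}(z)$ analogous to the $\Psi_k^{\pm}(z)$ of~(\ref{eq:PsiDef}). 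The paper emphasises that the two formulas are ``remarkably similar,'' so I expect the $k=\ell=1$ term to again supply the dominant asymptotics, with all higher $k$ or $\ell$ contributing at strictly smaller order, exactly as happens for $\GC_\pm$.

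Second, I would apply to $\Psi_1^{\AGC}$ the same asymptotic machinery that the paper applies to $\Psi_1^{\pm}$, namely a Borel-resummation or saddle-point extraction of the coefficient $[z^{1-g}]$, tracking the leading contributions from the Bernoulli tail. If the resulting leading contribution coincides with that of $-\Psi_1^{+}$ up to terms of strictly smaller order, then coefficient extraction immediately yields $\chi(\AGC^g) \sim -\chi(\GC_+^g)$ as $g \to \infty$. The main obstacle, I expect, will be the bookkeeping required to massage the Getzler--Kapranov expression — phrased in the language of modular operads and involving the Euler characteristics of moduli spaces of stable curves — into a form in which the parallel with $\Psi_k^{\pm}$ is manifest and each combinatorial factor can be matched against the corresponding factor in Theorem~\ref{thm:genfun}. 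Getting the signs and the $\delta_{2|k}$ corrections right is likely to be the subtlest part of the translation.

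Third, for the lower bound on $\dim H(\AGC^g)$, I would combine the elementary inequality $\dim H(\AGC^g) \geq |\chi(\AGC^g)|$ with the asymptotic equivalence just established. In the even-$g$ case this directly gives $\dim H(\AGC^g) > (Cg)^g$ for every $C < (2\pi e)^{-1}$ and all but finitely many even $g$, precisely as in the even part of Theorem~\ref{thm:HGCbound}. For odd $g$, where the cosine factor in the asymptotic for $\chi(\GC_+^g)$ could in principle force too much cancellation, I would invoke verbatim the irrationality-measure bound on $\pi$ used in the odd-$g$ half of Theorem~\ref{thm:HGCbound} to guarantee $\dim H(\AGC^g) > (Cg)^{g/2}$ for all but finitely many odd $g$. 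Once the asymptotic equivalence $\chi(\AGC^g) \sim -\chi(\GC_+^g)$ is in hand, this last step should transfer with no essential change.
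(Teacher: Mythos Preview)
Your third step (the lower-bound transfer via $|\chi|$ and the irrationality-measure argument) is fine and matches the paper exactly. The first two steps, however, contain a real misconception.

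You write that ``the $k=\ell=1$ term [will] again supply the dominant asymptotics, with all higher $k$ or $\ell$ contributing at strictly smaller order, exactly as happens for $\GC_\pm$.'' This is not what happens for $\GC_\pm$. Since $M_1(z)=z$ and the odd Bernoulli numbers vanish, the Bernoulli tail of $\Psi_1^\pm$ contributes only odd powers of $1/z$; hence $\xi^\pm_{n,1}=0$ whenever $n$ is even (Proposition~\ref{prop:oddxi}). For odd $g$ (i.e.\ even $n=g-1$) the entire leading contribution comes from the $k=2$ term $\xi^\pm_{n,2}$, analysed in Proposition~\ref{prop:xi22asy}. Your plan to compare only $\Psi_1^{\AGC}$ with $\Psi_1^{+}$ would therefore establish the asymptotic equivalence for even $g$ but say nothing about odd $g$ --- precisely the case where the cosine factor and the irrationality argument live.

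The actual content of the proof is therefore a $k=2$ computation, and here the Getzler--Kapranov formula genuinely differs from Theorem~\ref{thm:genfun}: one has $\widetilde M_k(z)=\frac{1}{k}\sum_{d|k}\phi(d)z^{k/d}$ in place of $M_k(z)=\frac{1}{k}\sum_{d|k}\mu(d)z^{k/d}$. For $k=1$ these agree, but for $k=2$ one gets $\widetilde M_2(z)=\frac12(z^2+z)$ versus $M_2(z)=\frac12(z^2-z)$, since $\phi(2)=1$ while $\mu(2)=-1$. In the analogue of Proposition~\ref{prop:xi22asy} this replaces the expansion of $1/(1-x)^m$ by that of $1/(1+x)^m$; the key observation is that only the \emph{even} binomial coefficients survive in the final sum, so the sign flip is immaterial and the leading asymptotic is unchanged. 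That is the whole argument --- no modular-operad bookkeeping is needed, because Theorem~\ref{thm:GK} already presents the formula in the same shape as Theorem~\ref{thm:genfun}.
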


The Lie graph complex computes the rational group cohomology of $\Out(F_g)$, the outer automorphism group of the free group,  and the cohomology of the moduli space of graphs. Vogtmann and the author proved that the associated Euler characteristic behaves like $\chi(\Out(F_g)) \sim -e^{-1/4} (g/e)^g/(g\log g)^2$ as $g\rightarrow \infty$~\cite{MR4638715},
which is strikingly different from the oscillatory behavior exhibited by the other graph complexes.
The results of the present paper hence complete the analysis of the Euler characteristics of Kontsevich's original graph complexes. 
\subsection{Outline}
Section~\ref{sec:dioph} starts with a brief digression into the diophantine properties of the asymptotic expression from Theorem~\ref{thm:eulerGCeven}, which implies Theorem~\ref{thm:HGCbound}.

Sections~\ref{sec:disc}--\ref{sec:matching} are devoted to the proof of Theorem~\ref{thm:genfun}.  Appendix~\ref{sec:appendix}, contributed by Don Zagier, features an alternative proof for the even, $\GC_+$, case of Theorem~\ref{thm:genfun}, which relies on Theorem~1.1 of \cite{chan2019s_n}.  In Sections~\ref{sec:disc}--\ref{sec:genfunset}, we develop the necessary enumerative combinatorial tools to capture the value of the Euler characteristics. The methods of these sections are closely related to techniques used by Vogtmann and the author in \cite{MR4638715}.  Section~\ref{sec:partition} will introduce quantum field theory-inspired \emph{partition functions} to define fully analytic generating functions for the graph complex Euler characteristics $\chi(\GC_\pm^g)$.  Families of specific integrals will define these partition functions.  The technology is inspired by Getzler and Kapranov's work on modular operads \cite{getzler1998modular}. Here, however, we avoid using operads and  construct explicitly \emph{convergent} families of integrals suitable for manipulation via analytic tools.  In Section~\ref{sec:variabletrafo}, we use this additional flexibility to manipulate the partition functions and simplify them via a variable transformation. Eventually, in Section~\ref{sec:matching}, we can evaluate these simplified integrals by relating them to the asymptotic expansion of the $\Gamma$ function and prove Theorem~\ref{thm:genfun}.

In Section~\ref{sec:asymp}, we prove Theorems~\ref{thm:eulerGCeven}, \ref{thm:eulerGCodd}, and \ref{thm:eulerAGC}  via an asymptotic analysis of the generating function from Theorem~\ref{thm:genfun}. 
The key technical step (Lemma~\ref{lmm:sumA} and Lemma~\ref{lmm:shiftGaus}) is to express a highly alternating sum as an integral and evaluate this integral in the desired limit.

\subsection*{Acknowledgements}
I owe many thanks to Karen Vogtmann for numerous explanations relating to graph homology.  I thank Francis Brown and Thomas Willwacher for helpful discussions and Don Zagier for contributing his 2008 letter as Appendix~\ref{sec:appendix}.  This research was supported by Dr.\ Max Rössler, the Walter Haefner Foundation, and the ETH Zürich Foundation.

\section{The oscillating asymptotic behavior of \texorpdfstring{$\chi(\GC_\pm^g)$}{chi(GCg)} and the irrationality of \texorpdfstring{$\pi$}{Pi}}
\label{sec:dioph}

The magnitude of the Euler characteristic, $\chi(\GC_+^g) = \sum_k (-1)^k \dim H_k(\GC_+^g)$, gives a lower bound for the total dimension of the homology $\sum_k \dim H_k(\GC_+^g)$ (and for $\GC_-^g$ and $\AGC^g$ analogously). 
In the even $g$ case, Theorem~\ref{thm:eulerGCeven} implies
the even $g$ case of Theorem~\ref{thm:HGCbound}:
It follows from Theorem~\ref{thm:eulerGCeven} that for any positive constant $C <\sqrt{8 \pi}$
the inequality
$ |\chi(\GC_\pm^g)| > C g^{-3/2} (g/(2\pi e))^g $
holds for all but finitely many even $g \geq 2$.
Theorem~\ref{thm:HGCbound} follows as for all positive constants $C'<1$,
the inequality $C g^{-3/2} > (C')^g$ holds for all but finitely many $g \geq 2$.

The odd $g$ case is more complicated, as the cosine in the statement of Theorem~\ref{thm:eulerGCeven} can become arbitrarily small.
However, known bounds on the measure of the irrationality of $\pi$ 
imply that the cosine cannot become so small that it cancels the super-exponential growth rate from the $g^{g/2}$ term.
The following paragraphs will explain this mechanism.

According to a theorem of Mahler~\cite{MR0054660}, there is a number $\mu$, the \emph{irrationality measure of $\pi$}, such that the inequality
$ \left|\pi - {p}/{q}\right| > {q^{-\mu^*}}$ holds for all numbers $\mu^* > \mu$ and all but finitely many pairs of integers $p,q$.
In fact, we may assume that $\mu \leq 7.103...$ \cite{MR4170705}.
We can use this property of $\pi$ to prove the following lower bound on the cosine in the statement of Theorem~\ref{thm:eulerGCeven}:
\begin{proposition}
\label{prop:cos}
Let $\mu^*$ be strictly larger than the irrationality measure $\mu$ of $\pi$, then
$$
\left|
\cos\left(\sqrt{ \frac{\pi g}{4} } \; -\; \frac{\pi g}{4}\;-\; \frac{\pi}{8} \right)
\right|
\;>\;
g^{\frac12-\mu^*} \text{ for all but finitely many integers $g\geq 2$}\,.
$$
\end{proposition}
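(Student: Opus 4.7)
The plan is to convert closeness of $\theta_g := \sqrt{\pi g/4} - \pi g/4 - \pi/8$ to a half-odd multiple of $\pi$ into a statement about rational approximations of $\pi$ itself, to which Mahler's bound on $\mu$ can be applied. Using the elementary inequality $|\cos \theta| \geq (2/\pi)|\theta - (k+\tfrac12)\pi|$, valid whenever $|\theta - (k+\tfrac12)\pi| \leq \pi/2$, I would pick $k_g\in\Z$ minimizing $|\theta_g - (k+\tfrac12)\pi|$ and set $N_g := 2g + 8 k_g + 5$. Then a direct rearrangement gives
\begin{align*}
\theta_g - (k_g + \tfrac{1}{2}) \pi = \sqrt{\pi g/4} - \frac{\pi N_g}{8},
\end{align*}
and the minimization forces $|N_g - 4\sqrt{g/\pi}| \leq 4$, so $N_g$ is a positive integer tending to infinity with $g$.

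The crucial step is to rationalize the right-hand side via the identity $a - b = (a^2 - b^2)/(a+b)$:
\begin{align*}
\sqrt{\pi g/4} - \frac{\pi N_g}{8} = \frac{\pi\,(16g - \pi N_g^2)}{64\left(\sqrt{\pi g/4} + \pi N_g/8\right)},
\end{align*}
whose denominator is $\Theta(\sqrt{g})$ by the bound on $|N_g - 4\sqrt{g/\pi}|$. This converts an approximation problem involving $\sqrt{\pi}$, for which the irrationality measure of $\pi$ gives no direct information, into the approximation of $\pi$ by the rational $16g/N_g^2$, which is exactly what Mahler's theorem controls.

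Finally, I would pick $\mu^{**} \in (\mu, \mu^*)$. Mahler's theorem yields $|q\pi - p| > q^{1-\mu^{**}}$ for all but finitely many pairs of positive integers $p, q$. Taking $p = 16g$ and $q = N_g^2 \sim 16g/\pi$, both of which tend to infinity with $g$, gives $|16g - \pi N_g^2| \geq C\, g^{1-\mu^{**}}$ for some constant $C > 0$. Dividing by the $\Theta(\sqrt{g})$ denominator and multiplying by $2/\pi$ yields $|\cos \theta_g| \geq C'\, g^{1/2 - \mu^{**}}$. Since $\mu^{**} < \mu^*$, this exceeds $g^{1/2 - \mu^*}$ for all but finitely many $g$. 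The main obstacle, namely the passage from the irrationality of $\sqrt{\pi}$ implicit in $\theta_g$ to that of $\pi$ itself which is what $\mu$ governs, is precisely what the squaring identity handles, at the cost of only an extra $\sqrt{g}$ factor in the denominator.
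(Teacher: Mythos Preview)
Your proof is correct and follows essentially the same strategy as the paper: both reduce the problem to bounding $|\sqrt{\pi g/4} - \pi N/8|$ from below for a suitable integer $N$, and both pass from this to a rational approximation $16g/N^2$ of $\pi$ via the identity $a-b=(a^2-b^2)/(a+b)$ (equivalently, by squaring), which is exactly where Mahler's bound applies. The only cosmetic difference is that the paper first takes $N$ to be the nearest integer to $(8/\pi)\sqrt{\pi g/4}$ and then does a case split on $N \bmod 8$ to detect when this corresponds to a zero of the cosine, whereas you build the residue constraint $N_g\equiv 2g+5\pmod 8$ into the choice from the start by minimizing the distance of $\theta_g$ to $(k+\tfrac12)\pi$; your packaging avoids the case analysis but the arithmetic is the same.
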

\begin{proof}
For any given $g$, let $k_g$ be the unique integer
that minimizes the magnitude of $\mathcal R_g$ in
$$
\sqrt{
\frac{
\pi g
}{4}
}
\;=\;
\frac{\pi k_g}{8}
\;+\;\mathcal R_g\,.
$$
It follows that $|\mathcal R_g| < \frac{\pi}{16}$.
If $m=k_g-2g-1$ is not an integer of the form $8n+4$ for some $n\in \Z$,
then $|\cos\left(\sqrt{ \frac{\pi g}{4} } - \frac{\pi g}{4}- \frac{\pi}{8} \right)| =|\cos(\frac{\pi m}{8}+\mathcal R_g)|=|\sin(\pi \frac{m-4}{8}+\mathcal R_g)|\geq \sin(\frac{\pi}{16})$.
So, the statement is only nontrivial if $m=8n+4$ for some $n\in \Z$, which we will assume.
Using this with the inequality $|\sin(x)|\geq |x|(1-\frac{|x|}{\pi})$,
valid for all $|x| \leq \pi$, gives 
$|\cos\left(\sqrt{ \frac{\pi g}{4} } - \frac{\pi g}{4}- \frac{\pi}{8} \right)| = |\sin\left( \mathcal R_g\right)|\geq |\mathcal R_g| (1-\frac{|\mathcal R_g|}{\pi}) \geq \frac{15}{16} |\mathcal R_g|$.
It remains to prove a lower bound on $|\mathcal R_g|$.
The bounds on the irrationality measure of $\pi$ 
discussed above imply  
$ |\pi - 16 {g}/{k_g^2}| > {k_g^{-2\mu}} $
 for all but finitely many $g$.
By definition of $\mathcal R_g$, 
$ | \pi - 16 {g}/{k_g^2}|= | 16 {\mathcal R_g}/k_g + 8^2 {\mathcal R_g^2}/({\pi k_g^2}) | \leq 20 {|\mathcal R_g|}/{ k_g}. $
Hence,  
$ | \mathcal R_g| > \frac{1}{20} {k_g^{1-2\mu}} $ for almost all $g$.
Using 
$ k_g > \frac{8}{\pi} \left( \sqrt{\frac{\pi g}{4}}-\frac12 \right) $
and 
$\mu^* > \mu$ 
gives the statement.
\end{proof}

\section{Connected and disconnected graphs}
\label{sec:disc}

All graphs in this article are admissible. So, we will drop this adjective from now on. 
Let $\mathbf{G}_+$ ($\mathbf{G}_-$) be the set of isomorphism classes of even(-odd)-orientable graphs 
and $\mathbf{cG}_+$ ($\mathbf{cG}_-$) be the subsets of such graphs that are connected.
As explained in the introduction, the elements of $\mathbf{cG}_\pm$ generate the complexes $\GC_\pm^g$, whose Euler characteristics are signed sums over these elements:
\begin{proposition}
\label{prop:chiGC}
With $|E_G|$ and $|V_G|$ the number of edges and vertices of $G$,
\begin{align*} \chi(\GC_+^g) &\;=\; \sum_{\substack{G \in \mathbf{cG}_+\\\rank(G) = g}} (-1)^{|E_G|} & \chi(\GC_-^g) &\;=\; \sum_{\substack{G \in \mathbf{cG}_-\\\rank(G) = g}} (-1)^{|V_G|}\,, \end{align*}
where we sum over all connected graphs with fixed rank and the respective orientability.
\end{proposition}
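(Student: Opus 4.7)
The plan is to invoke the Euler--Poincaré principle, $\chi(\GC_\pm^g) = \sum_k (-1)^k \dim H_k(\GC_\pm^g) = \sum_k (-1)^k \dim C_k(\GC_\pm^g)$, and then to identify the dimension of each chain space with a count of graphs of prescribed edge or vertex number. Everything else is a straightforward unpacking of the definitions of $\GC_\pm^g$.

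The core step for $\GC_+^g$ is to show that $\dim C_k(\GC_+^g)$ equals the number of isomorphism classes of connected, admissible, rank-$g$, even-orientable graphs with exactly $k$ edges. A generator of $C_k(\GC_+^g)$ is such an isomorphism class equipped with an edge ordering, subject to the rule that orderings related by a permutation $\pi$ are identified with the sign $\sign(\pi)$. Any automorphism $\alpha$ of a graph $G$ acts on the two-element set of orientations by $\sign(\alpha_E)$. If $G$ is not even-orientable, some $\alpha$ has $\sign(\alpha_E) = -1$, which forces the generator to equal its own negative and hence vanish in $C_k(\GC_+^g)$. If $G$ is even-orientable, every automorphism preserves the orientation class, so $G$ contributes a one-dimensional summand. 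Substituting this count into the Euler--Poincaré identity produces $\sum_{G \in \mathbf{cG}_+, \rank(G)=g} (-1)^{|E_G|}$.

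The argument for $\GC_-^g$ is parallel. A generator of $C_k(\GC_-^g)$ is a connected, admissible, rank-$g$ graph on $k$ vertices equipped with an odd orientation, i.e., an edge ordering together with chosen bases of $H_0(G,\Z)$ and $H_1(G,\Z)$, modulo the automorphism action, which carries the sign $\sign(\alpha_E) \cdot \det(\alpha_{H_0}) \cdot \det(\alpha_{H_1})$. Odd-orientability is exactly the condition that this sign is always $+1$, so once again odd-orientable graphs contribute a one-dimensional summand and all other graphs contribute zero. Because $\GC_-$ is graded by vertex count rather than edge count, the Euler--Poincaré formula produces $(-1)^{|V_G|}$ in the signed sum.

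No substantial obstacle arises: the proposition is a direct translation of the definitions, combined with the standard identity equating the Euler characteristic of a bounded complex of finite-dimensional $\Q$-vector spaces with the alternating sum of its chain-space dimensions. The only mild subtlety is to recognize that the $\pm$-orientability conditions defining $\mathbf{cG}_\pm$ are precisely the conditions that single out the nonzero generators of $C_\bullet(\GC_\pm^g)$, so that isomorphism classes of these graphs form a basis rather than merely a spanning set.
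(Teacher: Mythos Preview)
Your proposal is correct and matches the paper's treatment: the paper gives no explicit proof of this proposition, regarding it as immediate from the definitions and the Euler--Poincar\'e identity already spelled out in the introduction. Your write-up supplies exactly the unpacking the paper leaves implicit, namely that the $\pm$-orientability conditions are precisely what make each orientable graph contribute a one-dimensional summand to the chain space while non-orientable graphs vanish.
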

Only connected graphs generate the graph complexes $\GC_+^g$ and $\GC_-^g$, but it is easier to evaluate expressions like the two above if we sum over all, possibly disconnected, graphs. Further, it is convenient to index such graphs by their Euler characteristic and define numbers for all $n \geq 0$,
\begin{align} \label{eq:chidef} \chi^{+}_n &\;=\; \sum_{\substack{G \in \mathbf{G}_+ \\ \chi(G) = -n}} (-1)^{|E_G|} & \chi^{-}_n &\;=\; \sum_{\substack{G \in \mathbf{G}_- \\ \chi(G) = -n}} (-1)^{|V_G|}\,, \end{align}
where we sum over all graphs with fixed Euler characteristic $\chi(G)=|E_G| -|V_G|$.
Admissibility of a graph implies that $2|E_G| \geq 3|V_G|$. 
Hence, $\chi(G) < 0$ if $G$ is non-empty, and there are only finitely many graphs $G$ with specific $\chi(G)=-n$. So, the sequences above are well-defined, and $\chi^\pm_0 =1$ due to the empty graph for which $\chi(G)=0$.

The following statement translates between the connected and disconnected case.
\begin{proposition}
\label{prop:disc}
In the ring of power series $\Q[[\frac{1}{z}]]$, 
\begin{align*} \sum_{n \geq 0}\; \chi^\pm_n \;z^{-n} \;=\; \prod_{g \geq 2} \;\left(\frac{1}{1-z^{1-g}}\right)^{\chi(\GC_\pm^g)}\,. \end{align*}
\end{proposition}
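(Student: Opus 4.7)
The plan is to rewrite both sides as a common product
\[
\prod_{[G']\,\in\,\mathbf{cG}_\pm}\bigl(1 - z^{\chi(G')}\bigr)^{-\sigma_\pm(G')}
\]
indexed by isomorphism classes of connected orientable graphs, where I abbreviate $\sigma_+(G') = (-1)^{|E_{G'}|}$ and $\sigma_-(G') = (-1)^{|V_{G'}|}$. For the right-hand side this is straightforward: since $\chi(G') = 1 - \rank(G')$ for connected $G'$, regrouping factors by rank and invoking Proposition~\ref{prop:chiGC} turns $\prod_{g\geq 2}(1 - z^{1-g})^{-\chi(\GC_\pm^g)}$ into the displayed product.

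The substantive step will be to match the left-hand side. Every isomorphism class $G \in \mathbf{G}_\pm$ is uniquely described by its multiset of connected components, say $G = \bigsqcup_i k_i\,G_i'$ with the $G_i'$ pairwise non-isomorphic and connected, and both $\chi(G) = \sum_i k_i\chi(G_i')$ and $\sigma_\pm(G) = \prod_i \sigma_\pm(G_i')^{k_i}$ are additive/multiplicative across components. Without any orientability constraint, summing over all nonnegative multiplicities would immediately produce $\prod_{[G']}\sum_{k\geq 0}\sigma_\pm(G')^k\, z^{k\chi(G')}$. The remaining task is to determine exactly which multisets yield an orientable $G$, and to verify that the allowed values of each $k$ truncate the corresponding geometric sum into the target factor.

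The key lemma I would prove is: with $G = \bigsqcup_i k_i\, G_i'$ as above, $\Aut(G) = \prod_i \Aut(G_i')\wr S_{k_i}$, and $G$ is orientable (in the relevant $\pm$-sense) if and only if each $G_i'$ is and $k_i \leq 1$ whenever $\sigma_\pm(G_i') = -1$. The within-component part of the orientation-sign is controlled by orientability of each $G_i'$, so the analysis reduces, by multiplicativity of signs and determinants, to the contribution of a single transposition swapping two copies of $G_i'$. In the even case this swap is a product of $|E_{G_i'}|$ edge-transpositions, with sign $(-1)^{|E_{G_i'}|} = \sigma_+(G_i')$. In the odd case, the same swap additionally exchanges two basis vectors of $H_0(G)$ (contributing $-1$) and $\rank(G_i')$ pairs of basis vectors of $H_1(G)$ (contributing $(-1)^{\rank G_i'}$); using $\chi(G_i') = 1 - \rank(G_i')$, the combined sign is $(-1)^{|E_{G_i'}|+1+\rank G_i'} = (-1)^{|V_{G_i'}|} = \sigma_-(G_i')$. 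An arbitrary $\sigma \in S_{k_i}$ is a product of $k_i - c(\sigma)$ transpositions (where $c(\sigma)$ is its number of cycles), so its orientation-sign is $\sigma_\pm(G_i')^{k_i - c(\sigma)}$, forced to be $+1$ for every $\sigma$ exactly when $\sigma_\pm(G_i') = +1$ or $k_i \leq 1$.

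Given the lemma, the left-hand side factors as $\prod_{[G']} \Phi_{G'}(z)$, where $\Phi_{G'}(z) = \sum_{k\geq 0} z^{k\chi(G')} = (1-z^{\chi(G')})^{-1}$ when $\sigma_\pm(G') = +1$, and $\Phi_{G'}(z) = 1 - z^{\chi(G')}$ when $\sigma_\pm(G') = -1$; both collapse to $(1-z^{\chi(G')})^{-\sigma_\pm(G')}$, matching the right-hand side. I expect the main obstacle to be the sign bookkeeping in the odd case, where three signs (the edge permutation and two determinants) must be tracked simultaneously for a single component swap. The crucial identity $|E_{G'}| + \rank(G') \equiv |V_{G'}| - 1 \pmod 2$, forced by $\chi(G') = 1 - \rank(G')$, is what makes the three contributions coalesce into the single parity $\sigma_-(G')$ and thereby aligns the odd case with the cleaner even one.
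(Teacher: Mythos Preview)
Your proposal is correct and follows essentially the same approach as the paper: both decompose a disconnected graph into its multiset of connected components, determine precisely when swapping two identical components reverses the orientation (namely when $\sigma_\pm(G')=-1$), and then package the allowed multiplicities into the factors $(1-z^{\chi(G')})^{-\sigma_\pm(G')}$. Your treatment is slightly more explicit about the wreath-product structure of $\Aut(G)$ and about general permutations in $S_{k_i}$, but the argument and the key parity computation in the odd case match the paper's proof.
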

The proof is a variant of the one for Theorem~2.5 in \cite{MR4638715}:
\begin{proof}
Each disconnected even/odd-orientable graph can be written as a unique disjoint union of connected even/odd-orientable graphs. However, not every such disjoint union is necessarily an even/odd-orientable graph.
If a disconnected graph contains two copies of the same connected graph, it has an automorphism that interchanges these components. This automorphism may flip the graph's orientation, making it non-orientable.
For even orientability, this happens if a disconnected graph, $G'$, contains two copies of the same graph, $G$, with an odd number of edges. The automorphism that switches the two components induces an odd permutation on the edge set of the whole graph, which is thereby not even-orientable.
For the odd-orientability, the same situation arises if a disconnected graph, $G'$, contains two copies of a graph $G$ with the sum $|E_G| + \dim H_0(G,\Z) + \dim H_1(G,\Z)$ being an odd number, as then, with $\alpha$ the automorphism of $G'$ that switches the two copies of $G$, we have $\sign(\alpha_E)\det (\alpha_{H_0}) \det(\alpha_{H_1})=-1$.

Let $\deg^+(G) = |E_G|$ and $\deg^-(G)= |V_G|$ and note that 
$\deg^-(G) = |E_G| + \chi(G) = |E_G| + \dim H_0(G,\Z) - \dim H_1(G,\Z)$ has the same parity as $|E_G| + \dim H_0(G,\Z) + \dim H_1(G,\Z)$. 
It follows from the last paragraph's discussion that
each member of $\mathbf{G}_\pm$ is a sequence of connected graphs in which graphs $G$ with even $\deg^\pm(G)$ may appear any number of times and 
graphs with odd $\deg^\pm(G)$ may appear at most~once.~~So, the follow\-ing product expands to a sum with one term for each $G'\in \mathbf{G}_\pm$ weighted by $(-1)^{\deg^\pm(G')} z^{\chi(G')}$:
\begin{align*} \prod_{\substack{G \in \mathbf{cG}_\pm\\\deg^\pm(G) \text{ is even}}} \left(1\;+\;z^{\chi(G)} \;+\; z^{2\chi(G)} \;+\; \cdots\right) \cdot \prod_{\substack{G \in \mathbf{cG}_\pm\\\deg^\pm(G) \text{ is odd}}} \left(1\;-\;z^{\chi(G)}\right)\,. \end{align*}
Also, use Proposition~\ref{prop:chiGC}, $\chi(G) = 1-\rank(G)$ if $G$ is connected, and $\sum_{n\geq 0} x^n = \frac{1}{1-x}$.
\end{proof}

\section{Half-edge labeled graphs}
\label{sec:half-edge}

A \emph{set partition} of a finite set $S$ is a set $P=\{B_1,\ldots,B_r\}$ of non-empty and mutually disjoint subsets of $S$ such that $B_1\cup \ldots \cup B_r = S$. The elements $B_i \in P$ are called \emph{blocks}. A bijection $\alpha: S \rightarrow S$ fixes the partition if there is an \emph{induced permutation of the blocks} $\alpha_P \in \SG_r$ such that $\alpha(B_i) = B_{\alpha_P(i)}$ for all $i$. 
For a given $\alpha$ that fixes $P$, the induced permutation $\alpha_P$ on the blocks is unique.
We use the convention that the empty set has one (empty) set partition, which is fixed by the unique bijection between empty sets.

For given $s \geq 0$ and $\alpha \in \SG_{2s}$, let $\mathcal E(\alpha)$ be the set of set partitions of $[2s]$ that are fixed by $\alpha$ and have $s$ blocks of size $2$.
Given $s,r \geq 0$ and $\alpha \in \SG_{2s}$, let $\mathcal V_{r}(\alpha)$ be the set of set partitions of $[2s]$ that are fixed by $\alpha$ and have exactly $r$ blocks of size $\geq 3$.
Further, for given $E \in \mathcal E(\alpha)$ and $V \in \mathcal V_r(\alpha)$,
we write $\alpha_E$ and $\alpha_V$ for the permutations that $\alpha$ induces on the blocks of $E$ and $V$.

This terminology allows us to write the number of 
isomorphism classes of even/odd-orientable graphs with a fixed number of edges and vertices as signed sums over set partitions.
\begin{theorem}
\label{thm:graph_partition}
For all $s,r \geq 0$, we have
\begin{align*} \left|\{ G \in \mathbf{G}_+: |E_G|=s,|V_G|=r\}\right| &\;=\; \frac{1}{(2s)!}\, \sum_{\alpha \in \SG_{2s}} \sum_{(E,V) \in \mathcal E(\alpha) \times \mathcal V_{r}(\alpha)} \sign(\alpha_E)\,, \text{ and } \\
\left|\{ G \in \mathbf{G}_-: |E_G|=s,|V_G|=r\} \right| &\;=\; \frac{1}{(2s)!}\, \sum_{\alpha \in \SG_{2s}} \sum_{(E,V) \in \mathcal E(\alpha) \times \mathcal V_{r}(\alpha)} \sign(\alpha) \cdot \sign(\alpha_V)\,. \end{align*}
\end{theorem}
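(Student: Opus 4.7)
The plan is to label the half-edges of a graph and recast the counting problem via a weighted Burnside lemma. A graph with $s$ edges and $r$ vertices, together with a bijection between its half-edges and $[2s]$, is the same datum as a pair $(E,V) \in \mathcal{E}(\id) \times \mathcal{V}_r(\id)$: the pairing $E$ records which half-edges form edges, and the partition $V$ records which half-edges meet each vertex (admissibility forcing block sizes $\geq 3$). The group $\SG_{2s}$ acts on such pairs by relabelling, its orbits are precisely isomorphism classes of admissible graphs, and the stabilizer of $(E,V)$ is naturally $\Aut(G)$, with $\alpha \mapsto (\alpha_E, \alpha_V)$ giving the induced actions on edges and vertices.

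Swapping the order of summation in either right-hand side, for any character $\epsilon \colon \Aut(G) \to \{\pm 1\}$ the sum $\sum_{\alpha \in \Aut(G)} \epsilon(\alpha)$ equals $|\Aut(G)|$ when $\epsilon$ is trivial and $0$ otherwise; combined with the orbit-stabilizer relation $|\text{orbit}| \cdot |\Aut(G)| = (2s)!$ this gives
\[
\sum_{\alpha \in \SG_{2s}} \sum_{(E,V) \in \mathcal{E}(\alpha) \times \mathcal{V}_r(\alpha)} \epsilon(\alpha)
= (2s)! \cdot \#\{G : |E_G|=s,\,|V_G|=r,\text{ and } \epsilon \text{ trivial on } \Aut(G)\}.
\]
The theorem therefore reduces to identifying $\epsilon_+(\alpha) := \sign(\alpha_E)$ with the even-orientability character (which is immediate from the definition of $\mathbf{G}_+$) and $\epsilon_-(\alpha) := \sign(\alpha) \cdot \sign(\alpha_V)$ with the odd-orientability character $\sign(\alpha_E)\cdot\det(\alpha_{H_0})\cdot\det(\alpha_{H_1})$ on $\Aut(G) \subset \SG_{2s}$.

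The main obstacle is thus the determinant identity $\sign(\alpha)\cdot\sign(\alpha_V) = \sign(\alpha_E)\cdot\det(\alpha_{H_0})\cdot\det(\alpha_{H_1})$ on $\Aut(G)$. I would prove it by passing to the cellular chain complex $C_1 = \Z^E \to C_0 = \Z^V$. After choosing an orientation on each edge, $\alpha$ acts on $C_0$ with determinant $\sign(\alpha_V)$, while on $C_1$ the determinant is $\sign(\alpha_E)$ times $(-1)$ to the total number of orientation reversals collected while going once around each cycle of $\alpha_E$. A short cycle-structure count in the wreath product $\Z/2 \wr \SG_s$, which is the stabilizer of $E$ in $\SG_{2s}$, identifies this extra factor with $\sign(\alpha)$, so that $\det(\alpha|_{C_1}) = \sign(\alpha)\cdot\sign(\alpha_E)$. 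Splitting the chain complex into the two short exact sequences $0 \to H_1(G;\Z) \to C_1 \to B_0 \to 0$ and $0 \to B_0 \to C_0 \to H_0(G;\Z) \to 0$, multiplicativity of determinants together with $\det(\alpha|_{B_0})^2 = 1$ yields $\det(\alpha|_{C_1}) \cdot \det(\alpha|_{C_0}) = \det(\alpha_{H_0}) \cdot \det(\alpha_{H_1})$, and substitution gives the required identity. The subtle point — and the main obstacle — is the extra $\sign(\alpha)$ factor in $\det(\alpha|_{C_1})$, which encodes the flipping of edge orientations by half-edge swaps that are invisible to $\alpha_E$ alone.
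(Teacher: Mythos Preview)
Your argument is correct and follows the same overall architecture as the paper: identify pairs $(E,V)$ with half-edge labeled graphs, swap the order of summation, use orbit--stabilizer plus the vanishing/non-vanishing of a character sum over $\Aut(G)$, and reduce the odd case to the determinant identity $\sign(\alpha)\cdot\sign(\alpha_V)=\sign(\alpha_E)\cdot\det(\alpha_{H_0})\cdot\det(\alpha_{H_1})$. The one genuine difference is how you prove that identity. The paper passes to the barycentric subdivision $\widetilde G$: there the edges (which are the half-edges of $G$) carry a \emph{canonical} orientation toward the new midpoint vertices, so no orientation choice is needed and one reads off $\det(\alpha_{\Q\widetilde E})=\sign(\alpha)$ and $\det(\alpha_{\Q\widetilde V})=\sign(\alpha_E)\cdot\sign(\alpha_V)$ directly, then runs the same short-exact-sequence determinant argument on $\widetilde G$. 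Your route works on the cellular chain complex of $G$ itself and extracts the extra factor $\sign(\alpha)$ from the wreath-product cycle count in $\Z/2\wr\SG_s$; this is a nice and slightly more hands-on computation, and your identification of the ``subtle point'' is exactly right. Both approaches buy the same identity; the paper's trades the wreath-product bookkeeping for the observation that subdivision makes orientations canonical, while yours stays on the original graph at the cost of tracking edge flips explicitly.
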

    The proof of this theorem will occupy the remainder of this section. 
    The first step is to interpret the Cartesian product $\mathcal E(\alpha) \times \mathcal V_{r}(\alpha)$ in the sums above as the set of \emph{half-edge labeled graphs} with $r$ vertices that have $\alpha$ as an automorphism. 
    We do so in the following definition.

    \begin{definition}
    \label{def:labgraph}
    Given a set $S$, an $S$-\emph{labeled graph} is a pair $\Gamma=(E,V)$ of two set partitions $E,V$ of $S$ such that all blocks of $E$ have size $2$ and all blocks of $V$ have size $\geq 3$.
    The blocks of $E$ and $V$ are the \emph{edges} 
    and \emph{vertices} of $\Gamma$, respectively.
    An automorphism of an $S$-labeled graph is a bijection $S\rightarrow S$, which fixes the partitions $E$ and $V$.
    These bijections form a group $\Aut(\Gamma)$.
    \end{definition}

    It is easy to see that giving a graph with $s$ edges together with a labeling of its edges' end-points (or half-edges) with integers from the set $[2s]=\{1,\ldots,2s\}$ is equivalent to giving a $[2s]$-labeled graph. The following lemma quantifies the number of $[2s]$-labeled graphs we can construct for a given isomorphism class of graphs.
    \begin{lemma}
    \label{lmm:orbit}
    Each isomorphism class of graphs with $s$ edges can be represented by exactly $(2s)!/|\Aut(\Gamma)|$ $[2s]$-labeled graphs where $\Gamma$ can be any such latter graph.
    \end{lemma}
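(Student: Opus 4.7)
The plan is to realize this as a standard orbit–stabilizer count for the natural action of $\SG_{2s}$ on the set of $[2s]$-labeled graphs.

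First I would set up the action: the symmetric group $\SG_{2s}$ acts on pairs of set partitions $(E,V)$ of $[2s]$ by $\alpha \cdot (E,V) = (\alpha(E), \alpha(V))$, where $\alpha$ sends a block $B$ to $\alpha(B)$. Since $\alpha$ is a bijection, it preserves block sizes, so this restricts to an action on the set of $[2s]$-labeled graphs in the sense of Definition~\ref{def:labgraph}. By the definition of $\Aut(\Gamma)$ as the group of bijections $[2s]\to[2s]$ fixing both partitions $E$ and $V$, the stabilizer of $\Gamma=(E,V)$ under this action is exactly $\Aut(\Gamma)$.

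The second step is to identify orbits with isomorphism classes of abstract graphs with $s$ edges. Given a $[2s]$-labeled graph $(E,V)$, one obtains an abstract graph by forgetting the labels: the blocks of $E$ are the edges, the blocks of $V$ are the vertices, and the two half-edges of an edge $\{i,j\}\in E$ are attached to the vertex blocks containing $i$ and $j$. Conversely, given any abstract graph $G$ with $s$ edges, a choice of bijection from its set of $2s$ half-edges to $[2s]$ produces a $[2s]$-labeled graph. Two such choices yield $[2s]$-labeled graphs lying in the same $\SG_{2s}$-orbit (compose with the relabeling), and more generally, two $[2s]$-labeled graphs lie in the same orbit if and only if the resulting abstract graphs are isomorphic, since any graph isomorphism lifts to a bijection of half-edges. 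Hence orbits are exactly isomorphism classes of graphs with $s$ edges.

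Finally, I would apply the orbit–stabilizer theorem: for any $\Gamma=(E,V)$ in the orbit corresponding to a given isomorphism class, the orbit has size
\[
\frac{|\SG_{2s}|}{|\Aut(\Gamma)|} = \frac{(2s)!}{|\Aut(\Gamma)|},
\]
which is exactly the claimed count of $[2s]$-labeled representatives. Note that $|\Aut(\Gamma)|$ is well-defined on the orbit (it depends only on the conjugacy class of the stabilizer), so any representative $\Gamma$ may be chosen.

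The only real subtlety is verifying cleanly that two $[2s]$-labelings yield isomorphic abstract graphs if and only if they are $\SG_{2s}$-related; once the notion of an abstract graph is matched with an equivalence class of $[2s]$-labeled graphs under relabeling, the rest is immediate from orbit–stabilizer.
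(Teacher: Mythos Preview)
Your proposal is correct and follows essentially the same approach as the paper: the paper's proof is a terse orbit--stabilizer argument for the $\SG_{2s}$-action on labelings, identifying $\Aut(\Gamma)$ as the stabilizer and the orbit as the set of labeled representatives of the isomorphism class. You simply spell out in more detail the correspondence between orbits and isomorphism classes, which the paper leaves implicit.
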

    \begin{proof}
    Let $\mathrm{lab}(G)$ be the set of $[2s]$-labeled representatives of the isomorphism class $G$. The symmetric group $\SG_{2s}$ acts transitively on $\mathrm{lab}(G)$ by permuting the labels.
    By Definition~\ref{def:labgraph}, the stabilizer of an element $\Gamma \in \mathrm{lab}(G)$ is the group $\Aut(\Gamma)$.
    Hence, $|\SG_{2m}| = |\mathrm{lab}(G)| |\Aut(\Gamma)|$.
    \end{proof}

    The sign factors in the statement of Theorem~\ref{thm:graph_partition} make the non-orientable graphs cancel out. In the even-orientability case, this mechanism is straightforward:
    \begin{lemma}
    \label{lmm:evenorient}
    For an $[2s]$-labeled graph $\Gamma$, the sum 
    $\sum_{\alpha \in \Aut(\Gamma)} \sign(\alpha_E)$
    evaluates to $|\Aut(\Gamma)|$ if $\Gamma$ 
    is even-orientable and to $0$ otherwise.
    \end{lemma}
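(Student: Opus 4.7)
The plan is to recognize the sum as a standard character-sum over the finite group $\Aut(\Gamma)$. The key step is to promote the bare assignment $\alpha \mapsto \sign(\alpha_E)$ to a \emph{group homomorphism} $\chi_E : \Aut(\Gamma) \to \{\pm 1\}$. Once this is established, the result follows immediately from the usual orthogonality: for a finite group $H$ and a homomorphism $\chi : H \to \{\pm 1\}$, one has $\sum_{h \in H} \chi(h) = |H|$ when $\chi$ is trivial, and $\sum_{h\in H}\chi(h)=0$ otherwise, since in the nontrivial case $\ker\chi$ has index $2$ in $H$.

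To see that $\chi_E$ is a homomorphism, note first that by the uniqueness clause already recorded in the preamble to Definition~\ref{def:labgraph}, for each $\alpha \in \Aut(\Gamma)$ the induced permutation $\alpha_E$ of the blocks of $E$ is uniquely determined by the relation $\alpha(B)=\alpha_E(B)$ for every block $B\in E$. Applying this relation twice yields $(\alpha\beta)(B) = \alpha(\beta_E(B)) = \alpha_E(\beta_E(B))$, so the uniqueness forces $(\alpha\beta)_E = \alpha_E\circ \beta_E$; composing with the sign character of $\Sym(E)$ gives the desired homomorphism $\chi_E$.

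It remains only to match the dichotomy of orthogonality with the definition of even-orientability. By definition, $\Gamma$ (equivalently, its underlying unlabeled graph) is even-orientable precisely when no automorphism induces an odd permutation on the edge set, i.e.\ precisely when $\chi_E$ is the trivial character; hence the sum equals $|\Aut(\Gamma)|$. If $\Gamma$ is not even-orientable, then $\chi_E$ is surjective onto $\{\pm 1\}$, and the character sum vanishes.

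There is no real obstacle here; the only substantive point is the functoriality $(\alpha\beta)_E = \alpha_E\beta_E$, which is essentially bookkeeping once one has the uniqueness of the induced block permutation.
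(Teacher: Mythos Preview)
Your proof is correct and follows essentially the same approach as the paper: both observe that $\alpha \mapsto \sign(\alpha_E)$ is a group homomorphism $\Aut(\Gamma) \to \{\pm 1\}$ and then invoke the standard dichotomy for the sum of a $\{\pm 1\}$-valued character. You supply slightly more detail on the functoriality $(\alpha\beta)_E = \alpha_E\beta_E$, whereas the paper simply asserts the homomorphism property and writes out the vanishing as $|\ker\xi| - |\ker\xi| = 0$.
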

    \begin{proof}
    The graph $\Gamma$ is even-orientable if it has no automorphism $\alpha$ for which $\sign(\alpha_E) =-1$. In that case, $\sum_{\alpha \in \Aut(\Gamma)} \sign(\alpha_E) = |\Aut(\Gamma)|$.
    The map $\xi:\Aut(\Gamma) \rightarrow \GL_1(\Z)$ given by $\xi:\alpha \mapsto \sign(\alpha_E)$ is a group homomorphism. If $\Gamma$ is not even-orientable, then $\xi$ is surjective and 
    $\sum_{\alpha \in \Aut(\Gamma)} \sign(\alpha_E) = \sum_{\alpha \in \xi^{-1}(1)}1 - \sum_{\alpha \in \xi^{-1}(-1)}1 = |\ker (\xi)| - |\ker(\xi)|=0. $
    \end{proof}

    For the odd-orientation we need an additional lemma,
    which for given $[2s]$-labeled graph $\Gamma$ 
    and automorphism $\alpha \in \Aut(\Gamma) \subset \SG_{2s}$
    relates the signs of the induced permutations $\alpha_E$ and 
    $\alpha_V$ on the edge and vertex set of $\Gamma$ 
    with the determinants of the automorphisms 
    that $\alpha$ induces on $\Gamma$'s homology, $\alpha_{H_k}:H_k(\Gamma;\Z) \rightarrow H_k(\Gamma;\Z)$ with $k\in \{0,1\}$.
    \begin{lemma}
    \label{lmm:OddOrient}
    For a pair $(\Gamma,\alpha)$ with the induced permutations/automorphisms as above, we have
    $$\sign(\alpha) \cdot \sign(\alpha_V) \;=\; \sign(\alpha_E) \cdot \det(\alpha_{H_0}) \cdot \det(\alpha_{H_1})\,.$$
    \end{lemma}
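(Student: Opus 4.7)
The plan is to compare the action of $\alpha$ on the cellular chain complex of $\Gamma$ with its action on homology. Orient each edge of $\Gamma$ arbitrarily and consider $C_0 = \Z V$ and $C_1 = \Z E$ with the boundary $\partial : C_1 \to C_0$; all three carry a natural $\Aut(\Gamma)$-action, making the four-term exact sequence
\[
0 \longrightarrow H_1(\Gamma;\Z) \longrightarrow C_1 \xrightarrow{\partial} C_0 \longrightarrow H_0(\Gamma;\Z) \longrightarrow 0
\]
$\alpha$-equivariant. Tensoring with $\Q$ and applying the multiplicativity of the determinant across a four-term equivariant exact sequence of finite-dimensional vector spaces yields
\[
\det(\alpha_{C_1}) \cdot \det(\alpha_{H_0}) = \det(\alpha_{C_0}) \cdot \det(\alpha_{H_1}).
\]
Since $\alpha$ acts on $C_0$ by permuting the vertex basis, $\det(\alpha_{C_0}) = \sign(\alpha_V)$, and the whole problem reduces to computing $\det(\alpha_{C_1})$ in terms of $\sign(\alpha)$ and $\sign(\alpha_E)$.

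The core of the argument, which I expect to be the main obstacle, is the claim $\det(\alpha_{C_1}) = \sign(\alpha) \cdot \sign(\alpha_E)$. I would establish it by decomposing the action orbit-by-orbit with respect to $\alpha_E$. A length-$k$ cycle $e_0 \to e_1 \to \cdots \to e_{k-1}$ of $\alpha_E$ is covered by $2k$ half-edges which, under $\alpha$, form either two $k$-cycles (\emph{Case A}) or one $2k$-cycle (\emph{Case B}); these are the only possibilities, since every $\alpha$-cycle on half-edges projects to a cycle of $\alpha_E$. In Case A the chosen edge orientations are preserved after one pass around the edge cycle, so $\alpha|_{C_1}$ on the $k$-dimensional invariant subspace spanned by $e_0,\ldots,e_{k-1}$ is a plain cyclic permutation with determinant $(-1)^{k-1}$; in Case B the orientation is reversed after one pass, introducing an extra $-1$ and giving determinant $(-1)^{k}$. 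Meanwhile, restricted to the orbit, $\sign(\alpha)$ equals $((-1)^{k-1})^2 = +1$ in Case A and $(-1)^{2k-1} = -1$ in Case B, while $\sign(\alpha_E)$ equals $(-1)^{k-1}$ in both cases. In either case the identity $\det(\alpha_{C_1})|_{\mathrm{orbit}} = \sign(\alpha)|_{\mathrm{orbit}} \cdot \sign(\alpha_E)|_{\mathrm{orbit}}$ holds, and taking the product over all orbits yields the global formula.

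Substituting $\det(\alpha_{C_0}) = \sign(\alpha_V)$ and $\det(\alpha_{C_1}) = \sign(\alpha)\sign(\alpha_E)$ into the exact-sequence identity, and using that $\det(\alpha_{H_0}), \det(\alpha_{H_1}) \in \{\pm 1\}$ because $\alpha$ acts by automorphisms on the finitely generated free abelian groups $H_0(\Gamma;\Z)$ and $H_1(\Gamma;\Z)$, one can rearrange factors freely to obtain the claimed identity $\sign(\alpha)\sign(\alpha_V) = \sign(\alpha_E) \det(\alpha_{H_0}) \det(\alpha_{H_1})$. After the orbit-wise analysis, this final step is routine sign bookkeeping.
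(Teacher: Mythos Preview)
Your proof is correct. Both arguments rest on the same determinant identity coming from the equivariant four-term exact sequence $0\to H_1\to C_1\to C_0\to H_0\to 0$, but you and the paper handle the computation of $\det(\alpha_{C_1})$ differently. The paper passes to the barycentric subdivision $\widetilde G$: there the $1$-cells are the half-edges of $\Gamma$ and carry a \emph{canonical} orientation (each half-edge points toward its midpoint vertex), so the induced action of $\alpha$ on $\Q\widetilde E$ is an honest permutation of basis vectors with determinant $\sign(\alpha)$, no case analysis required; the price is that $\Q\widetilde V$ now splits as $\Q V\oplus\Q E$, whence $\det(\alpha_{\Q\widetilde V})=\sign(\alpha_V)\sign(\alpha_E)$. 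You instead keep the original cellular complex, accept that $\alpha$ acts on $C_1=\Q E$ as a \emph{signed} permutation (because your edge orientations are arbitrary), and extract $\det(\alpha_{C_1})=\sign(\alpha)\sign(\alpha_E)$ by an orbit-by-orbit computation distinguishing whether the half-edge cover of an $\alpha_E$-cycle splits or not. Your approach is more hands-on and avoids introducing the auxiliary subdivided graph; the paper's trick of forcing a canonical orientation trades that case analysis for a slightly larger chain complex. Either way the final rearrangement is the same.
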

    \begin{proof}
    Let $\wt G$ be the barycentric subdivision of $\Gamma$.
    That means $\wt G$ is the graph, homeomorphic to $\Gamma$, that we construct from $\Gamma$ by adding a new two-valent vertex in the middle of each edge of $\Gamma$'s topological realization. The edges $\wt E$ of $\wt G$ correspond to the half-edges of $\Gamma$ and $\wt G$'s vertex set, $\wt V$, corresponds to the disjoint union of $\Gamma$'s edges and vertices. The graph $\wt G$ is a simplicial complex with a canonical orientation on its edges: we can point each edge towards the adjacent two-valent vertex. Hence, we have a canonical boundary map, $\partial: \Q \wt E \rightarrow \Q \wt V$, between the chain spaces $\Q \wt E$ and $\Q \wt V$. The vector spaces $\Q \wt E$ and $\Q \wt V$ come with canonical non-degenerate bilinear forms given, e.g., by $\langle e, e \rangle = 1$ and $\langle e,e'\rangle =0$ for generators $e\neq e' \in \wt E$.
    So, we can decompose the chain spaces using the orthogonal complement, giving canonical isomorphisms $\Q \wt E \rightarrow \ker \partial \oplus (\ker \partial)^{\perp}$ and $\Q \wt V \rightarrow (\im \partial) \oplus (\im \partial)^\perp$.
    We get a chain of canonical isomorphisms,
    $$H_1(\wt G;\Q) \oplus \Q \wt V \rightarrow H_1(\wt G;\Q) \oplus (\im \partial) \oplus (\im \partial)^\perp
    \rightarrow 
    \ker \partial \oplus (\ker \partial)^{\perp} \oplus H_0(\wt G;\Q)
    \rightarrow
    \Q \wt E \oplus H_0(\wt G;\Q),$$
    where we also used the identification of $\ker \partial$ with $H_1(\wt G;\Q)$ and the natural isomorphism $(\im \partial)^\perp \rightarrow H_0(\wt G;\Q).$
    Every $\alpha \in \Aut(\Gamma)$ induces an automorphism of $\wt G$ and thereby automorphisms
    on any of the above vector spaces, which we denote with the respective space in the subscript, e.g.,~$\alpha_{\Q \wt V} : \Q \wt V \rightarrow \Q \wt V$.
    Due to the chain of isomorphisms above, we have
    $\det (\alpha_{H_1(\wt G;\Q) \oplus \Q \wt V}) = \det( \alpha_{\Q \wt E \oplus H_0(\wt G;\Q)}).$
    The determinants factor over direct sums, so 
    $$\det (\alpha_{H_1(\wt G;\Q)}) \cdot \det(\alpha_{\Q \wt V}) \;=\; \det( \alpha_{\Q \wt E} )\cdot \det(\alpha_{ H_0(\wt G;\Q)})\,.$$
    Each automorphism fixes $\wt G$'s edges' orientation, so $\alpha_{\Q \wt E}$ 
    acts by permuting the basis vectors of $\Q \wt E$ corresponding to half-edges of $\Gamma$. Hence, $\det(\alpha_{\Q \wt E}) = \sign(\alpha)$. Also,
    $\alpha_{\Q \wt V}$ permutes the vertices of $\wt G$ corresponding to edges and vertices of $\Gamma$, so $\det(\alpha_{\Q \wt V}) = \sign(\alpha_E) \cdot \sign(\alpha_V)$. 
    Further, the automorphisms $\alpha_{H_0(\wt G;\Q)}$ and $\alpha_{H_1(\wt G;\Q)}$ can be identified with  
    automorphisms of $H_0(\Gamma;\Z)$ and $H_1(\Gamma;\Z)$, respectively. The statement follows from $\sign(\alpha_V)^2 = 1$.
    \end{proof}
    \begin{corollary}
    \label{cor:oddorient}
    For an $[2s]$-labeled graph $\Gamma$, the sum 
    $\sum_{\alpha \in \Aut(\Gamma)} \sign(\alpha)\cdot \sign(\alpha_V)$
    evaluates to $|\Aut(\Gamma)|$ if $\Gamma$ 
    is odd-orientable and to $0$ otherwise.
    \end{corollary}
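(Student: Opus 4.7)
The plan is to reduce Corollary~\ref{cor:oddorient} to Lemma~\ref{lmm:OddOrient} via the same homomorphism-to-$\{\pm 1\}$ trick used in the proof of Lemma~\ref{lmm:evenorient}. The key observation is that Lemma~\ref{lmm:OddOrient} rewrites the integrand $\sign(\alpha)\sign(\alpha_V)$ purely in terms of quantities that appear in the definition of odd-orientability, so the argument becomes almost mechanical.

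First, I would apply Lemma~\ref{lmm:OddOrient} termwise to rewrite
\begin{equation*}
\sum_{\alpha \in \Aut(\Gamma)} \sign(\alpha)\sign(\alpha_V)
\;=\;
\sum_{\alpha \in \Aut(\Gamma)} \sign(\alpha_E)\det(\alpha_{H_0})\det(\alpha_{H_1}).
\end{equation*}
Now define a map $\xi:\Aut(\Gamma) \rightarrow \GL_1(\Z)=\{\pm 1\}$ by
$\xi(\alpha)=\sign(\alpha_E)\det(\alpha_{H_0})\det(\alpha_{H_1})$. Since each of the three factors is itself a group homomorphism from $\Aut(\Gamma)$ into $\{\pm 1\}$ (signs of induced permutations and determinants of induced linear maps are multiplicative under composition), $\xi$ is a group homomorphism.

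If $\Gamma$ is odd-orientable, then by definition $\xi(\alpha)=+1$ for every $\alpha \in \Aut(\Gamma)$, and the sum collapses to $|\Aut(\Gamma)|$. If $\Gamma$ is not odd-orientable, there exists some $\alpha_0$ with $\xi(\alpha_0)=-1$, so $\xi$ is surjective. Then $\Aut(\Gamma)$ splits into two cosets of equal size $|\ker\xi|$, on which $\xi$ takes the values $+1$ and $-1$ respectively, and the sum becomes $|\ker\xi| - |\ker\xi| = 0$.

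I do not foresee a significant obstacle: all the work is already contained in Lemma~\ref{lmm:OddOrient}. The only care needed is to verify that $\det(\alpha_{H_0})$ and $\det(\alpha_{H_1})$ are genuinely multiplicative under composition in $\Aut(\Gamma)$, which is immediate from functoriality of homology (the assignment $\alpha \mapsto \alpha_{H_k}$ is a homomorphism of groups to $\GL(H_k(\Gamma;\Z))$, and $\det$ is then multiplicative). With that noted, the corollary drops out as a direct parallel to Lemma~\ref{lmm:evenorient}.
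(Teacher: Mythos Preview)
Your proposal is correct and follows exactly the paper's own argument: the paper's proof simply states that after applying Lemma~\ref{lmm:OddOrient}, one proceeds analogously to Lemma~\ref{lmm:evenorient}, which is precisely the homomorphism-to-$\{\pm 1\}$ trick you spell out.
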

    \begin{proof}
    After using Lemma~\ref{lmm:OddOrient}, the proof works analogously to the one of Lemma~\ref{lmm:evenorient}.
    \end{proof}

    \begin{proof}[Proof of Theorem~\ref{thm:graph_partition}]
    For given $s,r \geq 0$, let $\mathcal L_{s,r}$ be
    the set of $[2s]$-labeled graphs with $r$ vertices
    and $\mathcal L_{s,r}^{\mathrm{unlab}} \subset \mathcal L_{s,r}$ a
    subset that contains one representative for each isomorphism class of graphs in $\mathcal L_{s,r}$.
    By Definition~\ref{def:labgraph} and Lemma~\ref{lmm:orbit},
    $$
    \sum_{\alpha \in \SG_{2s}}
    \sum_{(E,V) \in \mathcal E(\alpha) \times \mathcal V_{r}(\alpha)}
    \sign(\alpha_E)
    =
    \sum_{\Gamma \in \mathcal L_{s,r}} 
    \sum_{\alpha \in \Aut(\Gamma)}
    \sign(\alpha_E)
    =
    \sum_{\Gamma \in \mathcal L_{s,r}^{\mathrm{unlab}}} 
    \frac{(2s)!}{|\Aut(\Gamma)|}
    \sum_{\alpha \in \Aut(\Gamma)}
    \sign(\alpha_E).
    $$
    The first equation in Theorem~\ref{thm:graph_partition} follows after using Lemma~\ref{lmm:evenorient}.
    The second equation is proven analogously using Corollary~\ref{cor:oddorient}.
    \end{proof}

    \section{Generating functions for set partitions}
    \label{sec:genfunset}

    We will use generating functions  to count the number of set partitions in $\mathcal E(\alpha)$ and $\mathcal V_r(\alpha)$ for any $\alpha \in \SG_{2s}$ and to get further analytic control over the expressions in Theorem~\ref{thm:graph_partition}.
For a permutation $\alpha \in \SG_{n}$, let $c_k(\alpha)$ be the number of $k$-cycles of $\alpha$ and $c(\alpha)$ its total number of cycles, $c(\alpha) = \sum_k c_k(\alpha)$.  To such an $\alpha$, we can associate the \emph{cycle index monomial}  in the variables $x_1,x_2,\ldots$ denoted as $x^\alpha = \prod_{k=1}^n x_k^{c_k(\alpha)}$.  Recall that for a permutation $\alpha \in \SG_{2s}$ and a set partition $P$ of $[2s]$ that is fixed by $\alpha$; we write $\alpha_P$ for the permutation induced on the blocks of $P$.  We will use power series in infinitely many variables. As a start, we define
\begin{align} \label{eq:defV} V(x_1,x_2,\ldots) \;=\; \exp\Biggl(\,\sum_{k \geq 1} \frac{x_{k}}{k} \Biggr) \,-\,1 \,-\, x_{1} \,-\,\frac{x_{1}^2}{2} \,-\,\frac{x_{2}}{2} \; = \; \frac{x_1^3}{6}\, +\, \frac{x_1 x_2}{2} \,+\, \frac{x_3}{3} \,+\, \frac{x_1^4}{24}\, + \ldots \end{align}

\begin{proposition}
\label{prop:genfun}
We have the following identities of power series in $\eta,\lambda,\phi$ and $x_1,x_2,\ldots$:
\begin{align*} \sum_{s = 0}^\infty \sum_{\alpha \in \SG_{2s}} \frac{x^\alpha}{(2s)!} \sum_{E \in \mathcal E(\alpha)} \eta^{c(\alpha_E)} &\;=\; \exp\Biggl( \eta \sum_{k \geq 1} \frac{x_k^2+x_{2k}}{2k} \Biggr) \text{ and } \\
\sum_{s,r \geq 0} \sum_{\alpha \in \SG_{2s}} \frac{x^\alpha \lambda^r}{(2s)!} \sum_{V \in \mathcal V_r(\alpha)} \phi^{c(\alpha_V)} &\;=\; \exp\Biggl( \phi \sum_{k \geq 1} \frac{\lambda^k}{k} \, V(x_{k},x_{2k},\ldots) \Biggr)\,. \end{align*}
\end{proposition}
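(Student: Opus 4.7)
My plan is to recognize both identities as instances of the exponential formula for cycle-indexed species. Given any pair $(\alpha, P)$ with $P$ a set partition of $[n]$ fixed by $\alpha\in\SG_n$, the orbits of $\alpha_P$ partition $[n]$ into $\alpha$-invariant subsets $O_1,\ldots,O_m$, and the restricted data $(\alpha|_{O_i}, P|_{O_i})$ defines a \emph{connected} sub-structure in which the induced block-permutation is a single cycle. All relevant weights are multiplicative over this decomposition: $x^\alpha$ factors because cycles are disjoint; $\eta^{c(\alpha_E)}$ and $\phi^{c(\alpha_V)}$ factor because each orbit of $\alpha_P$ contributes exactly one cycle; and $\lambda^r$ factors as $\prod_i \lambda^{k_i}$ where $k_i$ is the number of blocks in the $i$-th orbit. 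Consequently, each identity reduces to an exponential of its connected generating function, and the task becomes to compute the latter.

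For the first identity, a connected piece of length $k$ is a pair $(\alpha, E)$ on $[2k]$ with $E$ a perfect matching of $k$ blocks and $\alpha_E$ a single $k$-cycle. I would label the blocks cyclically as $B_0,\ldots,B_{k-1}$ with $\alpha(B_i)=B_{i+1\bmod k}$, introducing an overcount of $k$ per $(\alpha,E)$. The number of such ordered sequences of disjoint pairs is $(2k)!/2^k$; given one, $\alpha$ is specified by its restrictions $\alpha|_{B_i}\colon B_i\to B_{i+1\bmod k}$, each with two choices. The key observation is that the cycle structure of $\alpha$ on the orbit is obtained from that of $\pi:=\alpha^k|_{B_0}\in\SG_2$ by multiplying every cycle length by $k$, so $x^\alpha=\prod_j x_{kj}^{c_j(\pi)}$; and for each fixed $\pi$, freely choosing $\alpha|_{B_0},\ldots,\alpha|_{B_{k-2}}$ and then solving for $\alpha|_{B_{k-1}}$ yields $2^{k-1}$ valid $\alpha$'s. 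Assembling these factors I would arrive at the connected contribution
$$\frac{\eta}{(2k)!}\cdot\frac{1}{k}\cdot\frac{(2k)!}{2^k}\cdot 2^{k-1}(x_k^2+x_{2k})=\eta\,\frac{x_k^2+x_{2k}}{2k},$$
which recovers the first identity after exponentiation.

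For the second identity, I would run the same argument with blocks of common size $d\geq 3$, the $k$ blocks contributing an extra factor $\lambda^k$. The ordered-tuple count becomes $(kd)!/(d!)^k$, each restriction $\alpha|_{B_i}$ has $d!$ choices, and fixing $\pi=\alpha^k|_{B_0}\in\SG_d$ leaves $(d!)^{k-1}$ valid $\alpha$'s with cycle index $\prod_j x_{kj}^{c_j(\pi)}$. The contribution at length $k$ and block size $d$ is then $\frac{\phi\lambda^k}{k\,d!}\sum_{\pi\in\SG_d}\prod_j x_{kj}^{c_j(\pi)}$; summing over $d\geq 3$ and invoking the classical identity $\sum_{d\geq 0}\frac{1}{d!}\sum_{\pi\in\SG_d}\prod_j x_j^{c_j(\pi)}=\exp\bigl(\sum_{j\geq 1} x_j/j\bigr)$, together with the subtractions $1,\,x_1,\,(x_1^2+x_2)/2$ for $d=0,1,2$ that match~\eqref{eq:defV}, produces exactly $\frac{\phi\lambda^k}{k}\,V(x_k,x_{2k},\ldots)$. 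Summing over $k$ and exponentiating finishes the proof.

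The main obstacle will be the careful symmetry bookkeeping in the connected step: the factor $1/k$ from the cyclic overcount of block labelings, the multinomial from ordering the blocks, the $(d!)^{k-1}$ degrees of freedom in $\alpha$ once $\alpha^k|_{B_0}$ is fixed, and in particular the ``inflation'' of $\pi$'s cycle lengths by $k$ when passing to $\alpha$ on the orbit. Once these combinatorial factors are correctly aligned, both identities follow mechanically from the exponential formula and the standard cycle-index generating function of $\SG_d$.
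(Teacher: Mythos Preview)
Your proposal is correct and follows essentially the same route as the paper. The paper likewise decomposes each pair $(\alpha,P)$ along the orbits of $\alpha_P$, isolates the ``connected'' contribution where $\alpha_P$ is a single cycle, proves precisely your inflation statement (their Lemma~\ref{lmm:cyc}: a surjection $\mathrm{Cyc}(P)\to\SG_d$ via $\alpha\mapsto\alpha^r|_{B_1}$ with fibers of size $(r-1)!\,(d!)^{r-1}$, which is your $(d!)^{k-1}$ after you have already fixed the cyclic ordering and divided by $k$), and then applies the exponential formula; the organizational difference is only that you order the blocks cyclically up front whereas the paper packages the same count into Corollary~\ref{cor:cyc}.
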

To prove this proposition, we first need the following lemma on the number of permutations that fix a given set partition while acting transitively on its set of blocks.
It is a variant of a classic statement going back to P{\'o}lya.
It can also be interpreted as a statement on relations of 
character polynomials of symmetric group representations  
under the \emph{wreath} product (see, e.g.,~\cite[Chapter~4.3]{BLL}).
\begin{lemma}
\label{lmm:cyc}
For a set partition $P$ of $S$ into $r$ blocks each of size $d$, let $\mathrm{Cyc} (P)$ be the set of permutations of $S$ that fix $P$ and induce a cyclic permutation on its blocks. There is a surjection $f: \mathrm{Cyc}(P) \rightarrow \SG_d$.
For every pair $\alpha,\beta$
with $\beta \in \SG_d$ and $\alpha \in f^{-1}(\beta)$,
$\alpha$ and $\beta$ have the same number of cycles 
such that $\alpha$ has a $(k\cdot r)$-cycle for each
$k$-cycle of $\beta$. Moreover, $|f^{-1}(\beta)| = (r-1)! (d!)^{r-1}.$
\end{lemma}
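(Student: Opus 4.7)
The plan is to construct $f$ explicitly and then read off both claims from the construction. Fix the block $B_1 \in P$ together with a bijection $\psi : B_1 \to [d]$. Given $\alpha \in \Cyc(P)$, the induced permutation $\alpha_P$ is an $r$-cycle on the blocks of $P$, so $\alpha^r$ stabilizes each block; in particular $\alpha^r|_{B_1}$ is a permutation of $B_1$. Define
\[
f(\alpha) \;=\; \psi \circ \bigl(\alpha^r|_{B_1}\bigr) \circ \psi^{-1} \;\in\; \SG_d.
\]
Surjectivity is clear since, given any $\beta \in \SG_d$, we will exhibit preimages below.

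The cycle structure claim follows from a standard orbit-counting argument for the cyclic group generated by $\alpha_P$ acting on $P$. Since $\alpha_P$ is a single $r$-cycle, every $\alpha$-orbit meets each block the same number of times, so every cycle of $\alpha$ has length divisible by $r$. More precisely, for any $x \in B_1$, the $\alpha$-cycle through $x$ closes up at step $kr$, where $k$ is the length of the $\alpha^r|_{B_1}$-cycle of $x$; so the map sending an $\alpha$-cycle to the corresponding $\alpha^r|_{B_1}$-cycle it meets in $B_1$ is a bijection that turns $kr$-cycles of $\alpha$ into $k$-cycles of $\alpha^r|_{B_1}$, hence of $\beta = f(\alpha)$. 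In particular, $c(\alpha) = c(\beta)$.

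For the fiber count, fix $\beta \in \SG_d$ and enumerate the $\alpha \in f^{-1}(\beta)$ in three steps. First, choose the $r$-cycle $\alpha_P \in \SG_r$; this is a cyclic ordering of the $r$ blocks starting from $B_1$, giving $(r-1)!$ choices. Write this ordering as $B_1 \to B_{i_2} \to \cdots \to B_{i_r} \to B_1$. Second, choose the restrictions $\alpha|_{B_1}, \alpha|_{B_{i_2}}, \ldots, \alpha|_{B_{i_{r-1}}}$, each a bijection between two specified blocks of size $d$; this yields $(d!)^{r-1}$ choices and may be made freely. Third, the last restriction $\alpha|_{B_{i_r}} : B_{i_r} \to B_1$ is forced by the requirement
\[
\alpha|_{B_{i_r}} \circ \alpha|_{B_{i_{r-1}}} \circ \cdots \circ \alpha|_{B_1} \;=\; \psi^{-1} \circ \beta \circ \psi,
\]
which uniquely determines it. Multiplying gives $|f^{-1}(\beta)| = (r-1)!\,(d!)^{r-1}$.

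The only subtle point is the cycle-correspondence step: one must verify carefully that every $\alpha$-orbit does indeed have length exactly a multiple of $r$ and that restricting to a single block gives a well-defined bijection on the level of cycles. This is straightforward from the fact that $\alpha_P$ is an $r$-cycle, but it is the content-bearing step; the fiber-count is then a routine bookkeeping exercise.
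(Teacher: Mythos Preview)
Your proof is correct and follows essentially the same approach as the paper's own proof: both define $f$ via the restriction $\alpha^r|_{B_1}$, deduce the cycle correspondence from the fact that $\alpha_P$ is an $r$-cycle, and count fibers by freely choosing the cyclic ordering of blocks together with $r-1$ of the block-to-block bijections, with the last bijection then forced. Your treatment of the cycle-correspondence step is slightly more explicit than the paper's, but there is no substantive difference.
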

\begin{proof}
Without loss of generality, we may assume that $S$ is well ordered and $P=\{B_1,\ldots,B_r\}$ such that $B_1$ contains the smallest element of $S$.
We will construct a bijection between $\mathrm{Cyc}(P)$ and the set of all tuples $(\alpha_P,\beta, \gamma_1,\ldots,\gamma_{r-1})$ where $\alpha_P$ is a cyclic permutation in $\SG_r$ and $(\beta,\gamma_1,\ldots,\gamma_{r-1}) \in \left(\SG_d\right)^r$.
As $\alpha \in \mathrm{Cyc}(P)$ induces a cyclic permutation $\alpha_P$ on the blocks of $P$, its $r$-th power, $\alpha^r$, fixes each block. Restricting $\alpha^r$ to $B_1$ gives a permutation of $B_1$'s elements, which we can identify with an element $\beta\in\SG_d$.
As $\alpha$ permutes the $r$ blocks of $P$ cyclically, the $k$-cycles of $\beta$ are in bijection with the $k \cdot r$-cycles of $\alpha$.
Moreover, $\alpha$ gives bijections $B_{t(i-1)} \rightarrow B_{t(i)}$ where $t(i) = \alpha_P^{i}(1)$ for $i=1,\ldots,r-1$. By identifying each block with $[d]$ in the unique order-preserving way, we get permutations $\gamma_1,\ldots,\gamma_{r-1}$.
The construction is invertible. The last statement follows as there are $(r-1)!$ cyclic permutations in $\SG_r$.
\end{proof}

\begin{corollary}
\label{cor:cyc}
Let $\Pi_d(S)$ be the set of set partitions of $S$ such that
each $P\in \Pi_d(S)$ only has blocks of size $d$.
If $S$ has cardinality $r\cdot d$, then 
\begin{align*} \sum_{P \in \Pi_d(S)} \sum_{\alpha \in \mathrm{Cyc}(P)} x^\alpha \;=\; \frac{(r\cdot d)!}{r \cdot (d!)} \, \sum_{\beta \in \SG_d} x^{[r\cdot \beta]}\,, \end{align*}
where $x^{[r \cdot \beta]}$ means replacing each instance of $x_k$ in $x^\beta$ with $x_{r\cdot k}$.
\end{corollary}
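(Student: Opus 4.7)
The plan is to obtain the identity as a direct consequence of Lemma~\ref{lmm:cyc} combined with the standard count of the number of set partitions into equal-sized blocks. The key observation is that the inner sum $\sum_{\alpha \in \mathrm{Cyc}(P)} x^\alpha$ does not actually depend on the set partition $P$, only on the cardinalities $r$ and $d$, which decouples the two summations.

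First, I would fix a set partition $P \in \Pi_d(S)$. Lemma~\ref{lmm:cyc} gives a surjection $f : \mathrm{Cyc}(P) \to \SG_d$ with the properties that (i) every $\alpha \in f^{-1}(\beta)$ has cycle type obtained from $\beta$'s by multiplying every cycle length by $r$, so that $x^\alpha = x^{[r\cdot\beta]}$ for all $\alpha \in f^{-1}(\beta)$, and (ii) $|f^{-1}(\beta)| = (r-1)!\,(d!)^{r-1}$. Partitioning $\mathrm{Cyc}(P)$ over the fibers of $f$ therefore yields
\begin{align*}
\sum_{\alpha \in \mathrm{Cyc}(P)} x^\alpha
= \sum_{\beta \in \SG_d} |f^{-1}(\beta)| \, x^{[r\cdot\beta]}
= (r-1)!\,(d!)^{r-1} \sum_{\beta \in \SG_d} x^{[r\cdot\beta]},
\end{align*}
and crucially the right-hand side does not depend on the chosen $P$.

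Second, I would use the classical count $|\Pi_d(S)| = \frac{(rd)!}{r!\,(d!)^r}$, which follows from ordering the $rd$ elements of $S$ in $(rd)!$ ways, then dividing by the $(d!)^r$ orderings within each of the $r$ blocks and the $r!$ orderings of the blocks themselves. Multiplying this with the previous display gives
\begin{align*}
\sum_{P \in \Pi_d(S)} \sum_{\alpha \in \mathrm{Cyc}(P)} x^\alpha
= \frac{(rd)!}{r!\,(d!)^r} \cdot (r-1)!\,(d!)^{r-1} \sum_{\beta \in \SG_d} x^{[r\cdot\beta]}
= \frac{(r d)!}{r \cdot d!} \sum_{\beta \in \SG_d} x^{[r\cdot\beta]},
\end{align*}
as required. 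No genuine obstacle arises; the entire corollary is a bookkeeping consequence of Lemma~\ref{lmm:cyc} once one notes that the fibers of $f$ all have the same size and that the cycle-index weight is constant along each fiber.
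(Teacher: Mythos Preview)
Your proof is correct and follows essentially the same approach as the paper's own proof, which tersely says to use the surjection from Lemma~\ref{lmm:cyc} to resolve the inner sum and the standard count $|\Pi_d(S)| = (r\cdot d)!/((d!)^r r!)$. Your version simply spells out the two steps explicitly.
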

\begin{proof}
Use the surjection from Lemma~\ref{lmm:cyc} to resolve the sum over $\mathrm{Cyc}(P)$ and the fact that the number of such set partitions of $S$ into $r$ blocks of size $d$ is $|\Pi_d(S)| = (r \cdot d)!/(d!)^r/(r!)$.
\end{proof}

In the proof of Proposition~\ref{prop:genfun}, we need to apply this corollary with the \emph{exponential formula}, which is a standard statement (see, e.g., \cite[\S 5.1]{stanley1997enumerative2}):
\begin{lemma}
\label{lmm:exp}
Let $\Pi^k(S)$ be the set of all set partitions of $S$ into exactly $k$ blocks. 
Given a function $f:\Z_{>0} \rightarrow R$ into some ring $R$,
we define a new function $h:\Z_{\geq 0} \rightarrow R$ such that $h(0)=1$ and for a non-empty finite set $S$,
$$
h(|S|) \;=\; \sum_{k \geq 1} \sum_{\{B_1,\ldots,B_k\} \in \Pi^k(S)} \prod_{i=1}^k f(|B_i|)\,,
$$
then, providing convergence of all involved sums, we have the identity
\begin{align*} \sum_{n = 0}^\infty \frac{h(n)}{n!} \;=\; \exp\left( \sum_{k =1}^\infty\frac{f(k)}{k!} \right)\,. \end{align*}
\end{lemma}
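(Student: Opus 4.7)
The plan is to prove the identity by expanding the right-hand side as a power series in the values $f(k)/k!$, collecting terms according to the total size $n = k_1 + \cdots + k_m$, and matching coefficients against the combinatorial definition of $h(n)$ rewritten in terms of \emph{ordered} partitions of $[n]$.

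First I would expand
\begin{align*}
\exp\!\left(\sum_{k=1}^\infty \frac{f(k)}{k!}\right)
= \sum_{m=0}^\infty \frac{1}{m!} \left(\sum_{k=1}^\infty \frac{f(k)}{k!}\right)^{\!m}
= \sum_{m=0}^\infty \frac{1}{m!} \sum_{k_1,\ldots,k_m \geq 1} \prod_{i=1}^m \frac{f(k_i)}{k_i!},
\end{align*}
which is legitimate because the convergence hypothesis of the lemma permits rearrangement of absolutely convergent sums. I would then regroup the right-hand side by the value of $n = k_1 + \cdots + k_m$, interpreting the $m=0$ term as contributing $1$ to the $n=0$ summand. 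So the goal reduces to the coefficient-wise identity
\begin{align*}
\frac{h(n)}{n!}
= \sum_{m \geq 0} \frac{1}{m!} \sum_{\substack{k_1,\ldots,k_m \geq 1 \\ k_1+\cdots+k_m = n}} \prod_{i=1}^m \frac{f(k_i)}{k_i!} \qquad (n \geq 0),
\end{align*}
where for $n=0$ only the $m=0$ term contributes and both sides equal $1$.

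For $n \geq 1$, I would rewrite the defining sum for $h(n)$ by passing from \emph{unordered} set partitions $\{B_1,\ldots,B_k\} \in \Pi^k([n])$ to \emph{ordered} tuples $(B_1,\ldots,B_k)$ of pairwise disjoint non-empty subsets of $[n]$ whose union is $[n]$. Each unordered partition with $k$ blocks corresponds to $k!$ such ordered tuples, and for fixed block-size profile $(d_1,\ldots,d_k)$ with $d_1+\cdots+d_k=n$ the number of such ordered tuples is the multinomial coefficient $n!/(d_1!\cdots d_k!)$. This yields
\begin{align*}
h(n) = \sum_{k \geq 1} \frac{1}{k!} \sum_{\substack{d_1,\ldots,d_k \geq 1 \\ d_1+\cdots+d_k = n}} \frac{n!}{d_1!\cdots d_k!}\, f(d_1)\cdots f(d_k),
\end{align*}
which is exactly $n!$ times the coefficient identified on the right-hand side above (with $m=k$). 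Summing over $n \geq 0$ and interchanging sums (again justified by convergence) gives the claim.

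There is no serious obstacle: the argument is purely bookkeeping. The only point that warrants care is the bookkeeping of the $n=0$ case and the interchange of the two infinite sums, both of which are handled by the explicit convergence assumption. The substantive content is the passage from set partitions to ordered set compositions, which converts the $1/k!$ present in the exponential series into the correct symmetry factor for \emph{unordered} partitions.
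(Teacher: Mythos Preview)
Your proof is correct and takes essentially the same approach as the paper: both expand the exponential and match coefficients via multinomial counting of set partitions. The only cosmetic difference is that the paper indexes partitions by the multiplicity vector $(m_1,m_2,\ldots)$ of block sizes (using the count $n!/\prod_k m_k!(k!)^{m_k}$), whereas you pass through ordered compositions $(d_1,\ldots,d_k)$ and divide by $k!$; these are equivalent parametrizations of the same identity.
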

\begin{proof}
For $n \geq 1$ 
and integers $m_1,m_2,\ldots,m_n \geq 0$ such that 
$\sum_k k m_k = n$, there are precisely $n!/(\prod_k m_k! (k!)^{m_k})$
partitions of $[n]$ into $m_1$ blocks of size $1$, $m_2$ blocks of size $2$ and so on.
Hence,
\begin{gather*} h(n) \;=\; \sum_{\substack{ m_1,\ldots,m_n \geq 0\\ \sum_k k m_k = n}} \frac{n!}\, {\prod_k m_k! (k!)^{m_k}} \prod_{k=1}^n f(k)^{m_k}\,. \qedhere \end{gather*}
\end{proof}

\begin{proof}[Proof of Proposition~\ref{prop:genfun}]
Let $EA_{s}$ be the set of all pairs $(E,\alpha)$ of a permutation $\alpha \in \SG_{2s}$ and an element $E \in \mathcal E(\alpha)$.
Every permutation can be uniquely decomposed into its cycle representation. So analogously, each pair $(E,\alpha)\in EA_s$ can be decomposed into a set partition $\{B_1,\ldots,B_k\}$ of $[2s]$ together with a set of pairs $\{(E_1,\alpha^{(1)}),\ldots,(E_k,\alpha^{(k)})\}$ one for each block $B_i$, where $E_i$ is a set partition of $B_i$ into blocks of size $2$, $\alpha^{(i)}$ is a permutation of $B_i$'s elements that fixes the set partition $E_i$ and induces a cyclic permutation on its blocks.

We want to translate this last paragraph into a formula.
For $(E,\alpha) \in EA_s$, we write
$\alpha_E$ for the permutation that $\alpha$ induces on the blocks of $E$ and $c(\alpha_E)$ for the number of its cycles. Hence,
\begin{align*} \sum_{(E,\alpha) \in EA_s} x^\alpha \, \eta^{c(\alpha_E)} \;=\; \sum_{k \geq 0} \; \eta^k \sum_{\{B_1,\ldots,B_k\} \in \Pi^k([2s])} \, \prod_{i=1}^k \Biggl(\; \sum_{\wt E\subset \Pi_2(B_i)} \sum_{\wt \alpha \in \mathrm{Cyc}(\wt E)} x^{\wt \alpha} \Biggr)\,. \end{align*}
As $\SG_2$ only has two elements, $(1)(2)$, and $(12)$, we learn from Corollary~\ref{cor:cyc} that
\begin{align*} \sum_{\wt E\subset \Pi_2(B_i)} \sum_{\wt \alpha \in \mathrm{Cyc}(\wt E)} x^{\wt \alpha} \;=\; \begin{cases} \frac{(2m)!}{m/2 \cdot 2!} ( x_{m}^2 + x_{2m}) &\text{ for even $|B_i| = 2m$ } \\
0 &\text{ for odd $|B_i|$ } \end{cases} \end{align*}
Applying Lemma~\ref{lmm:exp} gives the first identity for 
Proposition~\ref{prop:genfun}, as convergence is evident in the power series topology.

Let $VA_{s}$ be the set of all pairs $(V,\alpha)$ of a permutation $\alpha \in \SG_{2s}$ and a set partition $V \in \mathcal V_r(\alpha)$ for some $r \geq 0$.
For any $(V,\alpha) \in VA_s$, we write $r(V)$ for the number of blocks of $V$ and $c(\alpha_V)$ for the number of cycles of the induced permutation $\alpha_V$. By  the argument from above, we get
\begin{align} \label{eq:exp} \sum_{(V,\alpha) \in VA_{s}} \lambda^{r(V)}\, x^\alpha\, \phi^{c(\alpha_V)} \;=\; \sum_{k \geq 0} \phi^k \sum_{\{B_1,\ldots,B_k\} \in \Pi^k([2s])} \prod_{i=1}^k \Biggl( \sum_{\substack{ d \geq 3}} \sum_{\wt V \subset \Pi_{d}(B_i)} \lambda^{r(\wt V)} \sum_{\wt \alpha \in \mathrm{Cyc}(\wt V)} x^{\wt \alpha} \Biggr). \end{align}
Corollary~\ref{cor:cyc} implies that 
\begin{align} \label{eq:argexp} \frac{1}{|B_i|!} \, \sum_{\substack{ d \geq 3}} \sum_{\wt V \subset \Pi_{d}(B_i)} \lambda^{r(\wt V)} \sum_{\wt \alpha \in \mathrm{Cyc}(\wt V)} x^{\wt \alpha} \;=\; \sum_{\substack{ d \geq 3, r\geq 1 \\ \text{s.t. } d \cdot r = |B_i|}} \lambda^{r} \, \frac{1}{r \cdot (d!)} \sum_{\beta \in \SG_d} x^{[r\cdot \beta]}\,. \end{align}
For $d \geq 3$ and integers $c_1,\ldots,c_d \geq 0$
such that $c_1 + 2c_2+ 3c_3+\ldots = d$,
there are precisely $d!/(\prod_{k} k^{c_k} c_k!)$ permutations
with exactly $c_1$ 1-cycles, $c_2$ 2-cycles, and so on. Hence,
$$
\sum_{d \geq 3}
\,
\frac{1}{d!}
\,
\sum_{\beta \in \SG_d}
\,
x^{[r\cdot \beta]}
\;=\;
\sum_{\substack{c_1,c_2,\ldots \geq 0\\\text{s.t. } \sum_k k c_k \geq 3}}
\prod_{k=1}^d \frac{x_{r\cdot k}^{c_k}}{k^{c_k} c_k!}
\;=\;
\exp\Biggl(\sum_{\ell \geq 1} \frac{x_{\ell \cdot r}}{\ell} \Biggr)
\;-\;1
\;-\;x_r \;-\; \frac{x_r^2}{2} \;-\; \frac{x_{2r}}{2}\,.
$$
Using this together with Eqs.~\eqref{eq:exp}, \eqref{eq:argexp},
and Lemma~\ref{lmm:exp} gives the statement.
\end{proof}

We have now finished the purely combinatorial part of the argument 
that will lead to the proof of Theorem~\ref{thm:genfun}. The following three sections will relate the mainly discrete objects discussed above with analytic ones. Thanks to the additional flexibility provided by the analytic tools, we can prove the relatively simple formula in Theorem~\ref{thm:genfun}.

\section{Partition functions for graph complex Euler characteristics}
\label{sec:partition}

In this section, we define families of integrals whose asymptotic expansions in a specific limit encode the numbers $\chi_n^\pm$ from Eq.~\eqref{eq:chidef}. 
For fixed $n \geq 1$, we define the following functions in $z,x_1,\ldots,x_n$. By analogy to quantum mechanical systems, we call them \emph{action} functions:
\begin{align} \begin{aligned} \label{eq:action} \mathcal S^+_n(z,x_1,\ldots,x_n) &\;=\; \phantom{-}\;\sum_{k=1}^n \frac{z^k}{k} \Biggl( \exp\Biggl(i \sum_{\ell = 1}^{\lfloor n/k\rfloor} \frac{x_{k\ell}}{\ell} \Biggr) \;-\; 1 \;-\; i x_k \Biggr) \\
\mathcal S^-_n(z,x_1,\ldots,x_n) &\;=\; -\;\sum_{k=1}^n \frac{z^k}{k} \Biggl( \exp\Biggl(\phantom{i} \sum_{\ell = 1}^{\lfloor n/k\rfloor} \frac{x_{k\ell}}{\ell} \Biggr) \;-\; 1 \;-\; \phantom{i} x_k \Biggr), \end{aligned} \end{align}
where $\lfloor q \rfloor$ is the largest integer $\leq q$.
Each such action function gives a \emph{partition function} via the following integral expression that depends on $z\in\R_{>0}$ and a bounded subset $D \subset \R^n$:
\begin{align} \label{eq:partitionfunc} I^\pm_n(z,D) \;=\; \left( \prod_{k=1}^n \frac{ \exp\left(\pm \frac{\delta_{2|k}}{2k} \right) }{\sqrt{2 \pi k / z^k} } \right) \, \int_{D} \, \exp\left( \mathcal S^\pm_n(z,x_1,\ldots,x_n) \right) \, \dd x_1 \cdots \dd x_n\,. \end{align}
These integrals exist  because $D$ is bounded, and the integrand is an entire function in $x_1,\ldots,x_n$.

We will use the big-$\bigO$ notation to formulate asymptotic statements.
Let $h:\R_{>0} \rightarrow \R_{>0}$ be a positive function defined on positive reals. 
The notation $`\bigO(h(z))$ as $z\rightarrow \infty$' denotes the set of all functions $f(z)$, for which $\limsup_{z\rightarrow \infty} |f(z)|/h(z) < \infty$.
The notation `$f(z) = g(z) + \bigO(h(z)) \text{ as } z \rightarrow \infty$' means that $f(z)-g(z) \in \bigO(h(z))$ as $z\rightarrow \infty$.

Let $B_n(\varepsilon) = \{ (x_1,\ldots,x_n) \in \R^n:\sum_{k=1}^n |x_k|/(k\varepsilon^{k}) \leq 1 \}$ and recall the definition of the sequences $\chi^\pm_n$ in Eq.~\eqref{eq:chidef}.
The main result of this section is that for a specific set of integration domains, the large $z$ asymptotic expansion of the integrals $I^\pm_n(z,D)$ 
encodes the integers $\chi^\pm_n$.

\begin{theorem}
\label{thm:integral}
For fixed $n \geq 1$, and any $z$-dependent family, $D(z)$, of subsets of $\R^n$,  which fulfills
$B_n(z^{-\frac{5}{12}}/8) \subset D(z) \subset B_n(z^{-\frac{5}{12}}/2),$ the integral $I^\pm_n(z,D(z))$ behaves asymptotically as
\begin{align*} I^\pm_n(z,D(z)) \;=\; \sum_{k=0}^{\lfloor n/12 \rfloor} \chi^\pm_k ~z^{-k} \;+\;\bigO(z^{-\frac{n+1}{12}}) \text{ as } z \rightarrow \infty\,. \end{align*}
\end{theorem}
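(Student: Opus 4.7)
The plan is to interpret $I^\pm_n(z, D(z))$ as a formal Gaussian integral whose Feynman-diagram expansion in $1/z$ matches, via the combinatorial machinery of Sections~\ref{sec:half-edge}--\ref{sec:genfunset}, the partial sum $\sum_{k=0}^{\lfloor n/12\rfloor} \chi^\pm_k z^{-k}$ up to a truncation error governed by the specific domain $D(z)$.

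First, rescale $x_k = \sqrt{k/z^k}\, y_k$, turning the prefactor into the standard Gaussian measure $\prod_k e^{-y_k^2/2}\,\mathrm{d}y_k/\sqrt{2\pi}$. Under this rescaling, $D(z) \supset B_n(z^{-5/12}/8)$ contains a large box $|y_k| \le \sqrt{k}\cdot 8^{-k} z^{k/12}$, whose Gaussian complement has super-exponentially small measure. Then split $\mathcal S^\pm_n = \mathcal Q + \mathcal P^\pm$ with $\mathcal Q = -\sum_k z^k x_k^2/(2k)$ the diagonal Gaussian piece (the same in both sign cases, since in $\mathcal S^+_n$ the $i$'s square to $-1$ on the quadratic term), and $\mathcal P^\pm$ absorbing the linear tail $u_k - x_k$, the off-diagonal cross-terms from $u_k^2$, and the valence-$\ge 3$ interactions $u_k^p/p!$. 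Expanding $e^{\mathcal P^\pm}$ as a Taylor series and integrating term by term against the Gaussian measure, Wick's theorem presents each moment as a sum over perfect matchings of half-edges, i.e.\ as a sum over $[2s]$-labeled graphs in the sense of Definition~\ref{def:labgraph}.

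The central step is to match this Feynman-diagram sum with the generating function of Proposition~\ref{prop:genfun}. The valence-$\ge 3$ part of $\mathcal P^\pm$ Wick-contracts to the vertex generator $\exp\!\bigl(\sum_k (z^k/k)\,V(x_k, x_{2k},\ldots)\bigr)$; the Wick propagator $\langle x_k^2\rangle = k/z^k$ together with the off-diagonal cross-quadratics and the linear tail $u_k - x_k$ reproduces the edge generator $\exp\!\bigl(\sum_k (x_k^2 + x_{2k})/(2k)\bigr)$; and the prefactor $\exp(\pm\delta_{2|k}/(2k))$ precisely cancels the one-dimensional Gaussian contribution $e^{\pm 1/(4k)}$ from integrating out the leading linear-in-$y_{2k}$ term of $\mathcal P^\pm$. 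The signs assemble correctly: the $i$'s of $\mathcal S^+_n$ combine with the cycle-index structure of Proposition~\ref{prop:genfun} to give $\sign(\alpha_E)$, while for the $-$ case Lemma~\ref{lmm:OddOrient} converts the $-$ signs of $\mathcal S^-_n$ into $\sign(\alpha)\sign(\alpha_V)$. Grouping by the Euler characteristic $-k$ of the underlying graph and applying Theorem~\ref{thm:graph_partition} extracts $\chi^\pm_k$ as the coefficient of $z^{-k}$.

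For the error analysis, the bound $|x_k| \le k\cdot 2^{-k}\, z^{-5k/12}$ inside $D(z)$ shows that each additional factor of $\mathcal P^\pm$ in the Taylor expansion contributes at least $z^{-1/12}$. Truncating the Feynman sum at all diagrams whose underlying graph has $-\chi(G) \le \lfloor n/12\rfloor$ leaves a remainder of order $z^{-(n+1)/12}$; combined with the super-exponentially small tail contribution from outside $D(z)$, this proves the claimed asymptotic. The admissibility bound $|E_G| \le 3(-\chi(G))$ ensures that the cutoff $\ell \le \lfloor n/k\rfloor$ in $u_k$ does not discard any graph in the retained range. The principal obstacle is the combinatorial bookkeeping in the central step --- aligning the signs, the automorphism factors $1/|\Aut(\Gamma)|$ from Lemma~\ref{lmm:orbit}, the multi-index structure produced by the $\ell$-sums in $u_k$, and the subtle role of the prefactor $\exp(\pm\delta_{2|k}/(2k))$ --- so that the Feynman sum coincides with the generating function of Proposition~\ref{prop:genfun} on the nose rather than up to some spurious correction.
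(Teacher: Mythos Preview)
Your overall strategy is the paper's: split the action into a Gaussian piece and an interaction, expand the interaction, and recognize the Wick sum as the graph generating function of Proposition~\ref{prop:genfun} and Theorem~\ref{thm:graph_partition}. But your error analysis contains a genuine gap. You put the entire linear tail $u_k - x_k = \sum_{\ell\ge 2} x_{k\ell}/\ell$ into $\mathcal P^\pm$ and then assert that ``each additional factor of $\mathcal P^\pm$ contributes at least $z^{-1/12}$'' on $D(z)$. This is false: the $\ell=2$ part of the tail contributes $\frac{z^k}{k}\cdot \frac{x_{2k}}{2}$, and with $|x_{2k}|$ of order $z^{-5k/6}$ on $D(z)$ this term is of order $z^{k/6}$, which \emph{grows} with $z$. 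So $\mathcal P^\pm$ is not small on $D(z)$, the Taylor truncation of $e^{\mathcal P^\pm}$ is not controlled in the way you claim, and the remainder estimate does not follow.

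The paper avoids this by not leaving that piece in the perturbation. It subtracts the $y_2/2$ term in the definition of $V$ (Eq.~\eqref{eq:defV}) and, correspondingly, packages the dominant linear tail together with the diagonal quadratic into the \emph{shifted} Gaussian measure $\mu_n^\pm(z)$ of Eq.~\eqref{eq:defmu}; the constant produced by completing the square is precisely what the prefactor $\exp(\pm\delta_{2|k}/(2k))$ absorbs. After this repackaging the interaction is $\sum_k \frac{z^k}{k} V(ix_k,ix_{2k},\ldots)$, which genuinely starts at total degree~$3$ and is uniformly $O(z^{-1/4})$ on $D(z)$. The rigorous truncation bound is then Lemma~\ref{lmm:Vsubs}, and the passage between $D(z)$ and $\R^n$ (needed because Wick's theorem, Proposition~\ref{prop:wick}, integrates over all of $\R^n$) is controlled by Lemma~\ref{lmm:ball} and Corollary~\ref{cor:ballfullint}. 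Your sketch acknowledges the tail estimate but does not separate these two steps, and by using the pointwise bound on $|x_k|$ (valid only on $D(z)$) together with Wick contractions (valid only on $\R^n$) in the same breath, it leaves the logical order unclear. To repair your argument, shift the Gaussian as the paper does so that $\mathcal P^\pm$ contains only degree-$\ge 3$ terms; then your power-counting and truncation become correct.
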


The proof of this theorem will occupy the remainder of this section. We will ensure that, for large $z$, the integrals are well-approximated by specific Gaussian integrals. After establishing bounds on this approximation's accuracy, we can compute the $z\rightarrow \infty$ asymptotic expansion via slight perturbations of the original Gaussian integral. In the finite-dimensional case, this strategy is called \emph{Laplace  method}. Here, a significant technical challenge is the dimension of the integrals, which increases with $n$. 

For any $z \in \R_{>0}$, we define the following complex-valued Gaussian measures on $\R^n$,
\begin{align} \label{eq:defmu} \mu_n^\pm(z) \;=\; \left( \prod_{k=1}^n \frac{ \exp \left( - \frac{z^k}{2k} \left(x_k- \rho^\pm \cdot \delta_{2|k}\cdot z^{-\frac{k}{2}} \right)^2 \right) }{\sqrt{2 \pi k / z^{k} }} \right)\, \dd x_1 \cdots \dd x_n\,, \end{align}
where $\rho^+ = i$ and $\rho^-= -1$ and we recall that $\delta_{2|k} = (1+(-1)^k)/2$.

The following proposition can be interpreted as an equivariant version of \emph{Wick}'s theorem, a standard statement in quantum field theory. 
Getzler and Kapranov obtained similar statements in a formal power series context (see,~\cite[\S8.16]{getzler1998modular}).
\begin{proposition}
\label{prop:wick}
Fix $z > 0$ and integers $n,s$ such that $n \geq 1$ and $s \geq 0$.  
If $n \geq 2s+1$ and 
 $\alpha \in \SG_{2s+1}$, then
$ \int_{\R^{n}} (i x)^\alpha \mu_{n}^+(z) = \int_{\R^{n}} x^\alpha \mu_{n}^-(z) = 0.$
If  $n \geq 2s$ and 
$\alpha \in \SG_{2s}$, then
\begin{align*} \int_{\R^{n}} (i x)^\alpha\, \mu_{n}^+(z) &\;=\; z^{-s} \sum_{E \in \mathcal E(\alpha)} (-1)^{c(\alpha_E)} \text{ and } \\
\int_{\R^{n}} x^\alpha\, \mu_{n}^-(z) &\;=\; z^{-s}\, \sign(\alpha)\, |\mathcal E(\alpha)|\,, \end{align*}
where $(ix)^\alpha$ is the monomial $x^\alpha$ with each instance of $x_k$ replaced by $ix_k$. 
\end{proposition}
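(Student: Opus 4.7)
The plan is to evaluate both integrals directly by exploiting that $\mu_n^\pm$ is a product measure in the coordinates $x_1, \ldots, x_n$. Each factor $\dd\mu_k^\pm$ is a one-dimensional Gaussian with variance $\sigma_k^2 = k/z^k$ and (generally complex) mean $m_k = \rho^\pm \delta_{2|k} z^{-k/2}$. I will apply the standard moment formula
\[ \int X^c \, \dd\mu = \sum_{p=0}^{\lfloor c/2 \rfloor} \frac{c!}{2^p p! (c-2p)!} \sigma^{2p} m^{c-2p}, \]
which is valid for all complex $m$ by analytic continuation (the integrand has absolutely integrable real part, and shifting the real contour by $i\,\Im(m)$ is justified via Cauchy's theorem). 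Combined with the factorisation $x^\alpha = \prod_k x_k^{c_k(\alpha)}$, this reduces both integrals to a product over $k$ of sums indexed by nonnegative integers $(p_k, q_k)$ with $2 p_k + q_k = c_k(\alpha)$, where $q_k = 0$ for odd $k$ (because $m_k = 0$ there). The $z$-powers in each term collapse uniformly to $z^{-\sum_k k c_k(\alpha)/2} = z^{-s}$.

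Next I would identify the resulting combinatorial sum with $|\mathcal E(\alpha)|$ or its signed variant. Restricting to the $c_k(\alpha)$ cycles of $\alpha$ of length $k$, every $\alpha$-invariant perfect matching on their union is obtained by (i) grouping the $c_k(\alpha)$ cycles into $p_k$ pairs and $q_k$ singletons (the latter only for even $k$), and (ii) choosing an $\alpha$-invariant matching on each block: a pair of length-$k$ cycles admits exactly $k$ invariant matchings (parameterised by the cyclic offset between the two cycles), and a singleton even cycle admits the unique ``antipodal'' matching. This yields
\[ |\mathcal E(\alpha)| = \prod_k \sum_{\substack{p_k, q_k \geq 0 \\ 2 p_k + q_k = c_k(\alpha)}} \frac{c_k(\alpha)!}{2^{p_k} p_k! q_k!}\, k^{p_k}, \]
with $q_k = 0$ for odd $k$. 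A direct check also shows that $\alpha$ permutes the pairs of each block as a single cycle, so $c(\alpha_E) = \sum_k (p_k + q_k)$.

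With this enumeration in hand the signs match mechanically. For $\mu_n^-$, each factor $m_k^{q_k}$ with $k$ even contributes $(-1)^{q_k}$; since $q_k \equiv c_k(\alpha) \pmod 2$ and $\sign(\alpha) = (-1)^{\sum_{k \text{ even}} c_k(\alpha)}$, the accumulated sign is $\sign(\alpha)$, which gives the second identity. For $\mu_n^+$, the overall $i^{c(\alpha)}$ coming from $(ix)^\alpha$ combines with the $i^{q_k}$ inside $m_k^{q_k}$ to produce $(-1)^{p_k + q_k}$ per cycle length, reproducing $(-1)^{c(\alpha_E)}$. In the odd-degree case $\alpha \in \SG_{2s+1}$, the identity $\sum_k k c_k(\alpha) = 2s+1$ forces $c_k(\alpha)$ to be odd for at least one odd $k$; for such $k$ the mean $m_k$ vanishes, and the corresponding one-dimensional moment $\int x_k^{c_k} \, \dd\mu_k^\pm$ is zero, annihilating the full integral.

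The main hurdle I expect is the combinatorial enumeration in step two --- specifically, verifying that each pair of equal-length cycles produces exactly $k$ invariant matchings, that each singleton even cycle produces a unique one, and that each such block contributes precisely one cycle to $\alpha_E$. This is the same wreath-product bookkeeping that underlies Lemma~\ref{lmm:cyc} and Corollary~\ref{cor:cyc}, and essentially amounts to an equivariant repackaging of the Isserlis/Wick expansion applied factor by factor.
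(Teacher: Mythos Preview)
Your argument is correct and takes a genuinely different route from the paper. The paper computes the full moment generating function $\int \exp(i\sum_k z^{k/2} x_k y_k/k)\,\mu_n^\pm$, evaluates it in closed form as $\exp(\mp\sum_k (y_k^2 \pm y_{2k})/2k)$, and then matches coefficients against the cycle-index expansion from Proposition~\ref{prop:genfun}; the combinatorics of $\alpha$-invariant matchings is thus outsourced entirely to that earlier generating-function identity. You instead exploit the product structure of $\mu_n^\pm$ and the one-variable Gaussian moment formula directly, then enumerate $\mathcal E(\alpha)$ by hand via the pair/singleton decomposition of cycles. Your approach is more self-contained and makes the equivariant Wick structure explicit (in particular the identification $c(\alpha_E)=\sum_k(p_k+q_k)$ is transparent), at the cost of redoing combinatorics that Proposition~\ref{prop:genfun} already packages. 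The paper's route is shorter precisely because that proposition is already available; yours would stand on its own without it.
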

\begin{proof}
We will first prove that for any $(y_1,\ldots,y_n) \in \R^n$, we have
\begin{align} \begin{aligned} \label{eq:fpm} f^+&\;:=\; \int_{\R^{n}} \exp\left( i \sum_{k=1}^n z^{\frac{k}{2}} \frac{x_k y_k}{k} \right) \, \mu_{n}^+(z) \;=\; \exp\left(- \sum_{k=1}^n \frac{y_k^2+ y_{2k}}{2k} \right) \\
f^-&\;:=\; \int_{\R^{n}} \exp\left(\phantom{i} \sum_{k=1}^n z^{\frac{k}{2}} \frac{x_k y_k}{k} \right) \, \mu_{n}^-(z) \;=\; \exp\left(\phantom{-}\sum_{k=1}^n \frac{y_k^2- y_{2k}}{2k} \right)\,, \end{aligned} \end{align}
where we agree that $y_k = 0$ whenever $k > n$.
From the definition of $\mu^\pm_n$ in Eq.~\eqref{eq:defmu}, we get
\begin{gather*} f^\pm\;=\; \prod_{k=1}^n \left( \int_{-\infty}^\infty \frac{ \exp \left( A_k^\pm \right) }{\sqrt{2 \pi k / z^{k} }} \dd x_k \right)\,, \quad \text{where } \\
\begin{aligned} A_k^+ &=\; - \frac{z^k}{2k} \left(x_k- i\delta_{2|k} \;z^{-\frac{k}{2}} \right)^2 +i z^{\frac{k}{2}} \frac{x_k y_k}{k} \;=\; - \frac{z^k}{2k} \left(x_k-i\delta_{2|k} \;z^{-\frac{k}{2}} - i y_k z^{-\frac{k}{2}} \right)^2 - \frac{y_k^2}{2k} - \frac{\delta_{2|k} y_k}{k}\,, \\
A_k^- &=\; - \frac{z^k}{2k} \left(x_k+\phantom{i}\delta_{2|k} \;z^{-\frac{k}{2}} \right)^2 +\phantom{i} z^{\frac{k}{2}}\frac{x_k y_k}{k} \;=\; - \frac{z^k}{2k} \left(x_k+\phantom{i}\delta_{2|k} \;z^{-\frac{k}{2}} - \phantom{i} y_k z^{-\frac{k}{2}} \right)^2 + \frac{y_k^2}{2k} - \frac{\delta_{2|k} y_k}{k} \,. \end{aligned} \end{gather*}
Eq.~\eqref{eq:fpm} follows from the formula
$\int_{-\infty}^\infty e^{-a (x+c)^2} \dd x = \sqrt{\pi/a}$,
which holds for all $c\in \C$ and $a > 0$. 

If $c_1,\ldots,c_m \geq 0$ with $\sum_k k c_k = m$, 
there are $m!/(\prod_{k\geq 1} k^{c_k} c_k!)$ permutations with exactly $c_1$ $1$-cycles, $c_2$ $2$-cycles, and so on. Recall our convention that $y_k =0$ for $k >n$, we find
\begin{align*} \sum_{m = 0}^\infty z^{\frac{m}{2}}\sum_{\alpha \in \SG_m} \frac{ (ix)^\alpha y^\alpha }{m!} \;=\; \sum_{m = 0}^\infty z^{\frac{m}{2}}\sum_{\substack{c_1,\ldots, c_m \geq 0\\ c_1+2c_2+\ldots=m}} \prod_{k\geq 1} \frac{ (ix_k y_k)^{c_k} }{k^{c_k} c_k! } \;=\; \exp\left( i\sum_{k=1}^n z^{\frac{k}{2}} \frac{x_k y_k}{k} \right). \end{align*}
Using this together with Eq.~\eqref{eq:fpm} and Proposition~\ref{prop:genfun}, we find that
\begin{align*} \int_{\R^{n}} \left( \sum_{m = 0}^\infty z^{\frac{m}{2}}\sum_{\alpha \in \SG_m} \frac{ (ix)^\alpha y^\alpha }{m!} \right)\, \mu_{n}^+(z) \;=\; \sum_{s = 0}^\infty \sum_{\alpha \in \SG_{2s}} \frac{y^\alpha}{(2s)!} \sum_{E \in \mathcal E(\alpha)} (-1)^{c(\alpha_E)}\,. \end{align*}
The integrand on the left-hand side is an entire function in the $y$-variables. 
So, both sides agree coefficient-wise in $y_1,\ldots,y_n$.
Also, each coefficient in front of $y^\alpha$ only depends on the cycle type of $\alpha$.
The first stated equation follows. For the second one, observe that  due to 
Proposition~\ref{prop:genfun},
\begin{align*} \exp\left( \sum_{k = 1}^n \frac{y_k^2-y_{2k}}{2k} \right) \;=\; \sum_{s = 0}^\infty \sum_{\alpha \in \SG_{2s}} (-1)^{\sum_{k} c_{2k}(\alpha)}\, \frac{y^\alpha}{(2s)!}\, |\mathcal E(\alpha)|\;=\; \sum_{s = 0}^\infty \sum_{\alpha \in \SG_{2s}} \sign(\alpha)\, \frac{y^\alpha}{(2s)!}\, |\mathcal E(\alpha)|\,, \end{align*}
where $c_{2k}(\alpha)$ is the number of $2k$-cycles of $\alpha$.
\end{proof}

The following corollary tells us how to extract the numbers of graphs out of Gaussian integrals.
\begin{corollary}
\label{cor:firstgraphintegral}
Fix $z > 0$ and integers $n,s,r$ such~that~$n \geq 1$ and $s \geq 0$.  
For all $n \geq 2s$, 
\begin{align*} \int_{\R^n} \Biggl( \sum_{\alpha \in \SG_{2s}} \frac{(ix)^\alpha}{(2s)!} |\mathcal V_r(\alpha) | \Biggr)\, \mu_n^+(z) &\;=\; (-1)^s z^{-s}\, \left|\{ G \in \mathbf{G}_+: |E_G|=s,|V_G|=r\}\right|\,, \text{ and } \\
\int_{\R^n} \Biggl( \sum_{\alpha \in \SG_{2s}} \frac{x^\alpha}{(2s)!} \sum_{V \in \mathcal V_r(\alpha)} (-1)^{c(\alpha_V)} \Biggr)\, \mu_n^-(z) &\;=\; (-1)^r z^{-s}\, \left|\{ G \in \mathbf{G}_-: |E_G|=s,|V_G|=r\} \right|\,. \end{align*}

\end{corollary}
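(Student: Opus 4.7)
The plan is to combine Proposition~\ref{prop:wick} with Theorem~\ref{thm:graph_partition} via the elementary identity that relates cycle counts to signs of permutations: for any $\pi \in \SG_m$, we have $\sign(\pi) = (-1)^{m - c(\pi)}$, hence $(-1)^{c(\pi)} = (-1)^m \sign(\pi)$. This conversion is what bridges the cycle-count weights produced by the Gaussian integrals to the sign weights appearing in Theorem~\ref{thm:graph_partition}.

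For the even case, I would start from the linearity of the integral and apply Proposition~\ref{prop:wick} term by term (legally because $n \geq 2s$), giving
\begin{align*}
\int_{\R^n}\!\left( \sum_{\alpha \in \SG_{2s}}\frac{(ix)^\alpha}{(2s)!}|\mathcal V_r(\alpha)|\right)\mu_n^+(z) = z^{-s}\sum_{\alpha \in \SG_{2s}}\frac{|\mathcal V_r(\alpha)|}{(2s)!}\sum_{E\in\mathcal E(\alpha)}(-1)^{c(\alpha_E)}.
\end{align*}
Since $\alpha_E$ is a permutation of the $s$ blocks of $E$, the sign identity gives $(-1)^{c(\alpha_E)} = (-1)^s \sign(\alpha_E)$. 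Pulling out the factor $(-1)^s$ and rearranging the double sum into a sum over $\alpha$ and pairs $(E,V) \in \mathcal E(\alpha) \times \mathcal V_r(\alpha)$ yields exactly the expression for $|\{G\in \mathbf G_+ : |E_G|=s, |V_G|=r\}|$ from the first equation of Theorem~\ref{thm:graph_partition}, which completes the even case.

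For the odd case the strategy is symmetric. Applying Proposition~\ref{prop:wick} to each monomial $x^\alpha$ produces
\begin{align*}
\int_{\R^n}\!\left( \sum_{\alpha \in \SG_{2s}}\frac{x^\alpha}{(2s)!}\sum_{V\in\mathcal V_r(\alpha)}(-1)^{c(\alpha_V)}\right)\mu_n^-(z) = z^{-s}\sum_{\alpha \in \SG_{2s}}\frac{\sign(\alpha)\,|\mathcal E(\alpha)|}{(2s)!}\sum_{V\in\mathcal V_r(\alpha)}(-1)^{c(\alpha_V)}.
\end{align*}
Now $\alpha_V$ is a permutation of the $r$ blocks of $V$, so $(-1)^{c(\alpha_V)} = (-1)^r \sign(\alpha_V)$. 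Extracting $(-1)^r$ and recombining into the double sum indexed by $(E,V) \in \mathcal E(\alpha) \times \mathcal V_r(\alpha)$ matches the second equation of Theorem~\ref{thm:graph_partition}, yielding $(-1)^r z^{-s}$ times the number of odd-orientable graphs with $s$ edges and $r$ vertices.

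No real obstacle is expected; the only subtlety is bookkeeping the parity conversion $(-1)^{c(\cdot)} \leftrightarrow \sign(\cdot)$ separately on the edge side (for $+$) and the vertex side (for $-$), which accounts for the asymmetric placement of the signs $(-1)^s$ versus $(-1)^r$ in the two stated identities. The hypothesis $n \geq 2s$ is needed precisely so that Proposition~\ref{prop:wick} applies to every $x^\alpha$ arising in the sum.
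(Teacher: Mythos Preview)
Your proposal is correct and follows essentially the same approach as the paper: apply Proposition~\ref{prop:wick} termwise, use the identity $(-1)^{c(\pi)} = (-1)^m\sign(\pi)$ for $\pi\in\SG_m$ on $\alpha_E$ (with $m=s$) and on $\alpha_V$ (with $m=r$), and then invoke Theorem~\ref{thm:graph_partition}. The paper's proof is just a terser version of exactly this argument.
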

\begin{proof}
Using Proposition~\ref{prop:wick}, we find that the left-hand sides are equal to 
\begin{align*} \sum_{\alpha \in \SG_{2s}} \frac{1}{(2s)!} |\mathcal V_r(\alpha) | \sum_{E \in \mathcal E(\alpha)} (-1)^{c(\alpha_E)}, \text{ and } \sum_{\alpha \in \SG_{2s}} \frac{1}{(2s)!}\, \sign(\alpha)\, |\mathcal E(\alpha)| \sum_{V \in \mathcal V_r(\alpha)} (-1)^{c(\alpha_V)} \text{ respectively.} \end{align*}
The statement follows from Theorem~\ref{thm:graph_partition}
because, for any permutation $\alpha \in \SG_n$, the 
sign and the parity of its number of cycles are related by
$(-1)^{c(\alpha)} = (-1)^n\, \sign(\alpha)$.
\end{proof}

To apply these methods to the integral expressions in Eq.~\eqref{eq:partitionfunc},
we need bounds on the accuracy of expansions of their integrands.
The following three technical statements, which give inequalities required for the proof of Theorem~\ref{thm:integral},
provide these bounds.
\begin{lemma}
\label{lmm:Vsubs}
If $R \geq 0$, $z \geq 2^{12}=4096$, $\phi=\pm 1$, $|\lambda| \leq z$ and $|x_k| \leq z^{-\frac{5}{12}k}$ for all $k\geq 1$, then
\begin{align*} \Biggl| \exp\Biggl( \phi \sum_{k \geq 1} \frac{\lambda^k}{k} V(x_{k},x_{2k},\ldots) \Biggr)- \sum_{\substack{s,r \geq 0\\ 2s < R}} \sum_{\alpha \in \SG_{2s}} \frac{x^\alpha \lambda^r}{(2s)!} \sum_{V \in \mathcal V_r(\alpha)} \phi^{c(\alpha_V)} \Biggr| \;\leq \; \left(\frac87 \right)^2 \left( \frac{2^{12}}{z} \right)^{R/12}\,. \end{align*}
\end{lemma}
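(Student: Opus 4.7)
The plan is to recognize the subtracted finite sum in the statement as the truncation, up to weighted degree $<R$ in the $x$-variables (with $\deg(x_k)=k$), of the series expansion supplied by the second identity of Proposition~\ref{prop:genfun}. After cancellation, the difference equals the tail
$$T_{\geq R} \;:=\; \sum_{2s \geq R}\ \sum_{\substack{r \geq 0,\ \alpha \in \SG_{2s}\\ V \in \mathcal V_r(\alpha)}} \frac{x^\alpha\, \lambda^r\, \phi^{c(\alpha_V)}}{(2s)!},$$
so the task reduces to bounding $|T_{\geq R}|$. Each $\alpha \in \SG_{2s}$ satisfies $\sum_k k\,c_k(\alpha) = 2s$, so $x^\alpha$ is weighted-homogeneous of degree $2s$; this homogeneity is what produces the geometric decay.

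To extract it, I introduce a parameter $t \geq 1$ and pass to the majorant obtained by substituting $x_k \mapsto t^k|x_k|$, $\lambda \mapsto |\lambda|$, and $\phi \mapsto 1$. Since $t^{2s-R} \geq 1$ for $2s \geq R$ and the majorizing series has non-negative coefficients, a second application of Proposition~\ref{prop:genfun} gives
$$|T_{\geq R}| \;\leq\; t^{-R} \exp\!\Bigl(\sum_{k \geq 1} \tfrac{|\lambda|^k}{k}\, V(t^k |x_k|, t^{2k} |x_{2k}|, \ldots)\Bigr).$$
Choosing $t := z^{1/12}/2$ makes the prefactor equal to $(2^{12}/z)^{R/12}$, matching the claimed rate; what remains is to bound the exponential by an absolute constant.

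For this, set $a := t\cdot z^{-5/12} = z^{-1/3}/2$, so that the hypothesis $|x_k|\leq z^{-5k/12}$ forces $t^k|x_k| \leq a^k$. A direct computation from \eqref{eq:defV} yields the identity $V(v,v^2,v^3,\ldots)=v^3/(1-v)$ for $|v|<1$, and combined with the non-negativity of $V$'s coefficients this produces $V(t^k|x_k|,t^{2k}|x_{2k}|,\ldots)\leq a^{3k}/(1-a^k)$. The exponent $5/12$ is calibrated precisely so that $|\lambda|^k a^{3k} \leq (z\,a^3)^k = (1/8)^k$; bounding $1-a^k \geq 1-a$ and summing the logarithm series yields
$$\sum_{k\geq 1}\frac{|\lambda|^k}{k}\,V(t^k|x_k|,\ldots) \;\leq\; \frac{1}{1-a}\sum_{k\geq 1}\frac{(1/8)^k}{k}\;=\;\frac{\log(8/7)}{1-a}.$$
Since $z\geq 2^{12}$ gives $a\leq 2^{-5}$, we have $1/(1-a)\leq 32/31<2$, and exponentiation produces the factor $(8/7)^{32/31}\leq(8/7)^2$ as announced. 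The only delicate point is the dual role of the exponent $5/12$ in the hypothesis: it must be large enough that $z\,a^3<1$ (so the $k$-sum converges uniformly in $z$), yet small enough that $t = z^{1/12}/2$ still delivers the asserted decay rate $t^{-R} = (2^{12}/z)^{R/12}$.
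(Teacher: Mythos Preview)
Your proof is correct and follows essentially the same route as the paper's own argument: both identify the tail via Proposition~\ref{prop:genfun}, majorize by replacing $x_k,\lambda,\phi$ with non-negative quantities, extract the factor $(2^{12}/z)^{R/12}$, re-sum the remainder via a second application of Proposition~\ref{prop:genfun}, and close with the identity $V(v,v^2,\ldots)=v^3/(1-v)$. Your introduction of the auxiliary parameter $t=z^{1/12}/2$ is a clean repackaging of the paper's direct inequality $z^{-s/6}\leq (2^{12}/z)^{R/12}2^{-2s}$; where the paper invokes the constraint $r\leq 2s/3$ explicitly before re-summing with $\lambda=1$, you leave $|\lambda|$ in place and let the same ``$3$'' emerge from the $v^3$ factor, which yields a marginally sharper constant $(8/7)^{32/31}$ that you then round up to the paper's $(8/7)^2$.
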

\begin{proof}
By Proposition~\ref{prop:genfun}, the expression inside the absolute function on the left-hand side is 
\begin{gather*} \mathcal R_{R} \;:=\; \sum_{\substack{s,r \geq 0\\ 2s \geq R}} \sum_{\alpha \in \SG_{2s}} \frac{x^\alpha \lambda^r}{(2s)!} \sum_{V \in \mathcal V_r(\alpha)} \phi^{c(\alpha_V)}\,, \intertext{which we can estimate using $|\phi|=1$ and the required bounds on $\lambda$ and $x_1,x_2,\ldots$} |\mathcal R_{R}| \;\leq\; \sum_{\substack{s,r \geq 0\\ 2s \geq R}} \sum_{\alpha \in \SG_{2s}} \frac{z^{-\frac{5}{6}s}\, z^r}{(2s)!} \, |V_r(\alpha)|\,. \end{gather*}
Recall that $\mathcal V_r(\alpha)$ only contains 
partitions of $[2s]$ into $r$ blocks, each of size $\geq 3$.
Thus, the sum only has support if $r \leq 2s/3$. Hence, in the sum
$z^{-\frac56 s + r} \leq z^{-\frac56 s + \frac23 s} = z^{-\frac16 s}$.
Moreover, it follows from $2s \geq R$ and $z \geq 2^{12}$ 
that $z^{-\frac16 s} \leq \left({2^{12}/z}\right)^{R/12} 2^{-2 s}$. Therefore,
\begin{gather*} |\mathcal R_{R}| \;\leq \; \left(\frac{2^{12}}{z}\right)^{R/12} \sum_{\substack{s,r \geq 0\\ 2s \geq R}} \sum_{\alpha \in \SG_{2s}} \frac{2^{-2s} }{(2s)!}\, |V_r(\alpha)|\,. \intertext{After dropping the requirement that $2s \geq R$ in the sum, we may apply Proposition~\ref{prop:genfun} to get} |\mathcal R_{R}| \;\leq\; \left(\frac{2^{12}}{z}\right)^{{R}/{12}} \exp\Biggl( |\phi| \sum_{k \geq 1} \frac{1}{k} V\left(2^{-k},2^{-2k},\ldots \right) \Biggr) \;=\; \left(\frac{2^{12}}{z}\right)^{{R}/{12}} \exp\Biggl( \sum_{k \geq 1} \frac{1}{k} \frac{2^{-3k}}{1-2^{-k}} \Biggr)\,, \end{gather*}
where we used Eq.~\eqref{eq:defV}. The statement
follows as 
$ \sum_{k \geq 1} \frac{1}{k} \frac{2^{-3k}}{1-2^{-k}} \leq \frac{1}{1-2^{-1}} \sum_{k \geq 1} \frac{2^{-3k}}{k} = 2 \log\frac87 $.
\end{proof}

Recall that for $\varepsilon > 0$, we defined $B_n(\varepsilon) = \{ (x_1,\ldots,x_n) \in \R^n:\sum_{k=1}^n |x_k|/(k\varepsilon^{k}) \leq 1 \}$.
\begin{lemma}
\label{lmm:ball}
For each $n \geq 0$,
there is a constant $C_n > 0$ 
such that for all $z,\varepsilon \in \R_{>0}$, $s \in \Z_{\geq 0}$, and $\alpha \in \SG_{2s}$, which fulfill $\varepsilon^2 z \geq 1$ and $s \leq n/2$, the following inequality holds
\begin{align*} \int_{\R^n \setminus B_n(\varepsilon )} \left| x^\alpha\, \mu^\pm(z) \right| \;\leq\; C_n\, z^{-s}\, e^{-\varepsilon \sqrt{z} }\,. \end{align*}
\end{lemma}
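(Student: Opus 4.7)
The plan is to rescale the variables so that the integrals become standard (perturbed) Gaussians, then decompose the tail region $\R^n \setminus B_n(\varepsilon)$ coordinate-wise and apply classical Gaussian tail estimates on each piece.

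Concretely, I would substitute $y_k = z^{k/2} x_k$ for each $k = 1, \dots, n$. Since $\sum_k k \cdot c_k(\alpha) = 2s$, the monomial transforms as $x^\alpha = z^{-s} y^\alpha$, producing the desired prefactor $z^{-s}$. The Jacobian $\prod_k z^{-k/2}$ combines with the normalizations $1/\sqrt{2\pi k/z^k}$ from Eq.~\eqref{eq:defmu} to give the standard factors $1/\sqrt{2\pi k}$. A direct computation, using $(i\delta_{2|k})^2 = -\delta_{2|k}^2 = -\delta_{2|k}$, shows that $|\mu_n^+(z)|$ pulls back to $\prod_k \frac{\exp(-y_k^2/(2k) + \delta_{2|k}/(2k))}{\sqrt{2\pi k}}\, dy_k$, while $|\mu_n^-(z)|$ pulls back to $\prod_k \frac{\exp(-(y_k + \delta_{2|k})^2/(2k))}{\sqrt{2\pi k}}\, dy_k$. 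In both cases, the measure is a standard Gaussian with shifts of size $\leq 1$, multiplied by a factor bounded uniformly in $z$. The region $B_n(\varepsilon)$ transforms into $B_n(\delta)$ with $\delta := \varepsilon\sqrt{z} \geq 1$.

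It therefore suffices to prove, with $\tilde\mu_n$ denoting the rescaled measure, that $\int_{\R^n \setminus B_n(\delta)} |y^\alpha| \, \tilde\mu_n(y)\,dy \leq C_n e^{-\delta}$ for all $\delta \geq 1$ and all $\alpha \in \SG_{2s}$ with $s \leq n/2$. For this I would use the union bound $\R^n \setminus B_n(\delta) \subseteq \bigcup_{k=1}^n A_k$, where $A_k = \{|y_k| \geq k\delta^k/n\}$: if all $|y_k|/(k\delta^k)$ were strictly less than $1/n$, their sum would be strictly less than $1$. Since $\tilde\mu_n$ factors as a product over $k$, the integral over each $A_k$ splits. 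The $n-1$ factors with index $j \neq k$ are standard Gaussian moments of $|y_j|^{c_j(\alpha)}$ with $c_j(\alpha) \leq 2s \leq n$, each bounded by a constant depending only on $n$.

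The remaining one-dimensional integral is $\int_{|y_k| \geq k\delta^k/n} |y_k|^{c_k(\alpha)} e^{-(y_k - \beta_k)^2/(2k)}\, dy_k$, to which I would apply a Gaussian tail estimate: splitting the exponent as $(y_k-\beta_k)^2/(2k) = (y_k-\beta_k)^2/(4k) + (y_k-\beta_k)^2/(4k)$, pulling out the tail value of the first term, and bounding the second factor by a Gaussian moment, one obtains a bound of the form $C_n e^{-k\delta^{2k}/(Cn^2)}$ once $\delta \geq 1$ (so that the threshold exceeds the shift). The one genuine calculation—and the main place to be careful about uniformity—is verifying that $e^{-k\delta^{2k}/(Cn^2)} \leq C_n' e^{-\delta}$ uniformly in $k \geq 1$ and $\delta \geq 1$. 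The worst case is $k=1$, where the function $\delta \mapsto \delta - \delta^2/(Cn^2)$ is bounded above on $[1,\infty)$ (its maximum scales like $n^2$); for $k \geq 2$ the superexponential growth of $\delta^{2k}$ in $\delta$ makes the bound even easier. Summing the $n$ contributions from the union bound then yields the claimed estimate. I do not foresee a conceptual obstacle; the main care is bookkeeping to ensure all implicit constants depend only on $n$ and not on $s, \alpha, z$, or $\varepsilon$.
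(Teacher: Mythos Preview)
Your approach is correct and genuinely different from the paper's. You rescale to unit variance, apply a union bound $\R^n\setminus B_n(\delta)\subset\bigcup_k\{|y_k|\ge k\delta^k/n\}$, and then estimate each slab by a one-dimensional Gaussian tail. The paper instead never decomposes the region: it absorbs the monomial into an exponential via $|x_k|^{c_k}\le c_k!\,e^{kc_k}z^{-kc_k/2}\exp(z^{k/2}|x_k|/k)$, so that $|x^\alpha|\le (2s)!(e^2/z)^s\exp\bigl(\sum_k z^{k/2}|x_k|/k\bigr)$, then uses the defining inequality of the complement, $\sum_k z^{k/2}|x_k|/k>\varepsilon\sqrt z$ (this is where $\varepsilon^2 z\ge1$ enters), multiplies it by a free parameter $\gamma$, and inserts the resulting $e^{-\gamma\varepsilon\sqrt z}$ under the integral sign. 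After that the integral can be extended back to all of $\R^n$ and evaluated as a product of shifted one-dimensional Gaussians; choosing $\gamma=1$ gives $C_n=n!(2e^6)^n$ explicitly. The paper's trick is slicker in that the decay $e^{-\varepsilon\sqrt z}$ drops out directly from the linear constraint with no case analysis, whereas your route requires the extra step of checking $k\delta^{2k}/(Cn^2)\ge\delta-\log C_n'$ uniformly in $k$ and $\delta$.

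One small point to tighten: your parenthetical ``once $\delta\ge1$ (so that the threshold exceeds the shift)'' is not quite right, since for $\mu^-$ the shift in even coordinates has magnitude $1$ while the threshold $k\delta^k/n$ can be below $1$ when $\delta$ is close to $1$ and $n$ is large. This is harmless---for $\delta$ in any bounded range the whole integral is dominated by the full Gaussian moment, a constant in $n$---but the write-up should make that split explicit.
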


Here, the vertical bars mean that we take the modulus of the 
complex-valued integrand. For instance with $\mu^\pm(z) = f^\pm(z,x_1,\ldots,x_n) \dd x_1 \cdots \dd x_n$, $|\mu^\pm(z)| = |f^\pm(z,x_1,\ldots,x_n)| \dd x_1 \cdots \dd x_n$.

\begin{proof}
We start by proving three inequalities. For all integers $n \geq k \geq 1$, $(x_1,\ldots,x_n)\in \R^n$, $z\in \R_{>0}$,
and a complex number $\rho^\pm\in \C$ of modulus $1$,
it follows directly that 
\begin{gather} \label{eq:ineqxi} \left| \exp\left( -\; \frac{z^k}{2k} \left(x_k\;-\; \rho^\pm \cdot \delta_{2|k} \cdot z^{-\frac{k}{2}} \right)^2 \right) \right| \;\leq\; \exp\Biggl( -\; \frac{z^k}{2k} x_k^2 \;+\;\frac{z^{\frac{k}{2}}}{k} |x_k| \;+\; \frac{1}{2k} \Biggr)\,. \end{gather}
For any given non-negative integer $s$ with $2s \leq n$ and $\alpha \in \SG_{2s}$, we have,
\begin{align} \label{eq:xalphaineq} |x^\alpha| \;\leq\; (2s)!\, \left(\frac{e^2}{z}\right)^{s}\, \exp\left(\sum_{k=1}^n \frac{z^{\frac{k}{2}} |x_k|}{k}\right)\,, \end{align}
To prove this second inequality observe that $k\geq 1 \Rightarrow \log k \leq k$, $\frac{|x|^k}{k!} \leq e^{|x|}$ and therefore
\begin{gather*} |x_k|^{c_k} \;=\; z^{-\frac{k c_k}{2}}\,e^{c_k \log k}\, \biggl(\frac{z^{\frac{k}{2}} |x_k|}{k}\biggr)^{c_k} \;\leq\; \left(\frac{e^2}{z}\right)^{\frac{k c_k}{2}} \,c_k!\, \exp\biggl( \frac{z^{\frac{k}{2}} |x_k|}{k}\biggr)\,. \end{gather*}
Eq.~\eqref{eq:xalphaineq} follows by writing $|x^\alpha| = \prod_{k=1}^n |x_k|^{c_k}$, where $c_k$ is the number of $k$-cycles of $\alpha$, 
because $a!b!\leq (a+b)!$ for all non-negative integers $a,b$, and $\sum_k c_k \leq \sum_k k c_k = 2s$. 

For all $\varepsilon,z >0$  with $\varepsilon^2 z \geq 1$ and restricting the $x$-values to lie outside $B_n(\varepsilon) \subset \R^n$, i.e.~$(x_1,\ldots,x_n) \in \R^n \setminus B_n(\varepsilon )$, we have the third inequality 
\begin{gather} \label{eq:ineqball} 0\;<\; \sum_{k=1}^n \frac{z^{\frac{k}{2}}\,|x_k|}{k}\;-\; \varepsilon \sqrt{z}\,, \end{gather}
which holds, because $(x_1,\ldots,x_n) \in \R^n \setminus B_n(\varepsilon )$ implies
$ 1< \sum_{k=1}^n \frac{|x_k|}{k\varepsilon^k} =\frac{1}{\varepsilon \sqrt{z}} \sum_{k=1}^n \frac{z^{\frac{k}{2}} |x_k|}{k(\varepsilon \sqrt{z})^{k-1} }, $
as we require $\varepsilon^2 z \geq 1$, Eq.~\eqref{eq:ineqball} follows.

We can apply the inequalities Eq.~\eqref{eq:ineqxi}, \eqref{eq:xalphaineq}, and \eqref{eq:ineqball} to the definition of $\mu^\pm(z)$ in Eq.~\eqref{eq:defmu} and find that for $n \geq 2s \geq 2$,
$\alpha \in \SG_{2s}$, $z,\varepsilon \in \R_{>0}$ with $\varepsilon^2 z \geq 1$ and $\gamma \in \R_{\geq 0}$, we have 
\begin{gather*} \left| \int_{\R^n \setminus B_n(\varepsilon )} x^\alpha \mu^\pm(z) \right| \;\leq\; (2s)!\, \left(\frac{e^2}{z}\right)^{s}\, e^{-\gamma \varepsilon \sqrt{z} }\, \int_{\R^n \setminus B_n(\varepsilon )} \frac{ \exp\left( \sum_{k=1}^n f_k(|x_k|,z,\gamma) \right) }{\prod_{k=1}^n\sqrt{2 \pi k / z^{k} }} \, \dd x_1\cdots \dd x_n , \end{gather*}
where
$f_k(x_k,z,\gamma) = - \frac{z^k}{2k} x_k^2 +(2+\gamma)\frac{z^{\frac{k}{2}}}{k} x_k + \frac{1}{2k} = - \frac{z^k}{2k} (x_k+(2+\gamma) z^{-\frac{k}{2}})^2 +\frac{(2+\gamma)^2 +1}{2k} $
and we multiplied Eq.~\eqref{eq:ineqball} with the arbitrary non-negative $\gamma$ before its application under the integral sign.
We can write  the integrand on the right-hand side as an integral over the positive orthant, which we can extend to a purely Gaussian integral over $\R^n$:
\begin{align*} \int_{\R^n \setminus B_n(\varepsilon )} \frac{ \exp\left( \sum_{k=1}^n f_k(|x_k|,z,\gamma) \right) }{\prod_{k=1}^n\sqrt{2 \pi k / z^{k} }} \dd x_1\cdots \dd x_n &= 2^n \int_{\R_{\geq 0}^n \setminus B_n(\varepsilon )} \frac{ \exp\left( \sum_{k=1}^n f_k(x_k,z,\gamma) \right) }{\prod_{k=1}^n\sqrt{2 \pi k / z^{k} }} \dd x_1\cdots \dd x_n \\
\leq 2^n \int_{\R^n} \frac{ \exp\left( \sum_{k=1}^n f_k(x_k,z,\gamma) \right) }{\prod_{k=1}^n\sqrt{2 \pi k / z^{k} }} \dd x_1\cdots \dd x_n &= 2^n\exp\left(\sum_{k=1}^n \frac{(2+\gamma)^2 +1}{2k}\right), \end{align*}
where we used the formula $\sqrt{\pi/a} = \int_{\R} e^{-a (x+b)^2} \dd x$ that holds for all $a >0$ and $b\in \R$ in the last step.
The statement follows 
as $\sum_{k=1}^n \frac{1}{k} \leq n$
and by choosing $\gamma =1$
and $C_n = n! (2 e^6)^n$.
\end{proof}
\begin{corollary}
\label{cor:ballfullint}
For each $n \geq 0$,
there is a constant $C_n' > 0$ 
such that for all $z,\varepsilon \in \R_{>0}$ which fulfill $z \geq 1$ and $\varepsilon^2 z \geq 1$, the following inequality holds
\begin{gather*} \int_{\R^{n}\setminus B_n(\varepsilon)} \Biggl| \Biggl(\, \sum_{\substack{r,s \geq 0\\ 2s \leq n}} \sum_{\alpha \in \SG_{2s}} \, \frac{x^\alpha z^r}{(2s)!}\, |\mathcal V_r(\alpha) | \Biggr)\, \mu_{n}^\pm(z) \Biggr| \;\leq\; C_n'\, e^{-\varepsilon \sqrt{z} }\,. \end{gather*}
\end{corollary}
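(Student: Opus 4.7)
The plan is to reduce Corollary~\ref{cor:ballfullint} to Lemma~\ref{lmm:ball} by interchanging the sum and the absolute value, and then absorbing the $z^r$ factor using the combinatorial constraint that $|\mathcal V_r(\alpha)|$ vanishes unless $3r \leq 2s$.

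First, I would apply the triangle inequality under the integral to get
\begin{align*}
\int_{\R^{n}\setminus B_n(\varepsilon)} \left| \left(
\sum_{\substack{r,s \geq 0\\ 2s \leq n}}
\sum_{\alpha \in \SG_{2s}}
\frac{x^\alpha z^r}{(2s)!}
|\mathcal V_r(\alpha) |
\right) \mu_{n}^\pm(z) \right|
\leq
\sum_{\substack{r,s \geq 0\\ 2s \leq n}}
\sum_{\alpha \in \SG_{2s}}
\frac{z^r |\mathcal V_r(\alpha)|}{(2s)!}
\int_{\R^n \setminus B_n(\varepsilon)} \left| x^\alpha \mu_n^\pm(z) \right|.
\end{align*}
Since $s \leq n/2$ in the sum, Lemma~\ref{lmm:ball} applies and bounds each of the inner integrals by $C_n z^{-s} e^{-\varepsilon \sqrt{z}}$.

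The next step is to control $z^{r-s}$. Every $V \in \mathcal V_r(\alpha)$ is a partition of $[2s]$ into $r$ blocks each of size at least $3$, so $3r \leq 2s$, i.e., $r - s \leq -s/3 \leq 0$. Combined with the hypothesis $z \geq 1$, this yields $z^{r-s} \leq 1$ for every term contributing to the sum. Pulling out the common factor $C_n e^{-\varepsilon\sqrt{z}}$ and using this estimate leaves the finite sum
\begin{align*}
\sum_{\substack{r,s \geq 0\\ 2s \leq n}} \sum_{\alpha \in \SG_{2s}} \frac{|\mathcal V_r(\alpha)|}{(2s)!},
\end{align*}
which depends only on $n$. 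Setting $C_n'$ equal to $C_n$ times this (finite) combinatorial sum completes the proof.

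The argument is essentially bookkeeping: the only mildly subtle ingredient is the observation that the admissibility-type bound $3r \leq 2s$ is exactly what is needed to defeat the $z^r$ growth against the $z^{-s}$ decay supplied by Lemma~\ref{lmm:ball}. I do not expect any genuine obstacle here; the hypothesis $z \geq 1$ together with the shape of $\mathcal V_r(\alpha)$ makes the passage from Lemma~\ref{lmm:ball} to the corollary immediate.
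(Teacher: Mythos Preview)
Your argument is correct and is precisely the approach the paper takes: its proof is the single line ``Use Lemma~\ref{lmm:ball}, that $\mathcal V_r(\alpha)$ is only non-empty if $r \leq \frac{2s}{3}$ and $z \geq 1$,'' which you have simply unpacked in detail.
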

\begin{proof}
Use Lemma~\ref{lmm:ball}, that $\mathcal V_r(\alpha)$ is only non-empty if $r \leq \frac{2s}{3}$ and $z \geq 1$.
\end{proof}

\begin{proof}[Proof of Theorem~\ref{thm:integral}]
We will only discuss the even case in detail. The odd case follows analogously. 
We fix $n \geq 1$ and a $z$-dependent family of subsets $D(z) \in \R^n$ that fulfills $B_n(z^{-\frac{5}{12}}/8) \subset D(z) \subset B_n(z^{-\frac{5}{12}}/2).$
We define $A^+_n(z)$ to be the generating polynomial of signed even-orientable graphs with Euler characteristic bounded from below by $-n/6$, i.e.,~with $\chi^+_n$ from Eq.~\eqref{eq:chidef},
\begin{gather*} A^+_n(z)\;=\; \sum_{k=0}^{\lfloor \frac{n}{6}\rfloor} \chi^+_k \,z^{-k} \;=\; \sum_{\substack{s,r \geq 0\\ r-s \geq -n/6}} (-1)^s \, z^{r-s}\, \left|\{ G \in \mathbf{G}_+: |E_G|=s,|V_G|=r\}\right| \,. \end{gather*}
Recall that an (admissible) graph $G$ fulfills $2|E_G| \geq 3|V_G|$.
Hence, the Euler characteristic $\chi(G)= |V_G|-|E_G|$ is negative for every non-empty $G$, and the right-most sum above only has support if $2s \geq 3r$. In the sum, we therefore have $s-r \leq \frac13 s \leq n/6 \Rightarrow 2s \leq n$.
Hence, 
\begin{align*} A^+_n(z) \;=\; \sum_{\substack{s,r \geq 0\\ 2s \leq n}} (-1)^s \, z^{r-s}\, \left|\{ G \in \mathbf{G}_+: |E_G|=s,|V_G|=r\}\right| \;+\;\mathcal R^{(1)}_n(z)\,, \end{align*}
where the error term is a (finite) sum over graphs that have at most $n/2$ edges  but whose Euler characteristic is smaller than $-n/6$. Hence, $\mathcal R^{(1)}_n(z)\in z^{-\lceil n/6 \rceil-1} \Z[z^{-1}]$, and $\mathcal R^{(1)}_n(z) \in \bigO(z^{-\lceil n/6 \rceil-1})$ as $z\rightarrow \infty$.
Using Corollary~\ref{cor:firstgraphintegral}, we write the expression above as an integral,
\begin{gather*} A^+_n(z) \;=\; \int_{\R^{n}} \Biggl(\, \sum_{\substack{r,s \geq 0\\ 2s \leq n}} \sum_{\alpha \in \SG_{2s}} \frac{(ix)^\alpha z^r}{(2s)!}\, |\mathcal V_r(\alpha) | \Biggr)\, \mu_{n}^+(z) \;+\;\mathcal R^{(1)}_n(z)\,, \end{gather*}
whose integration to a domain we may restrict to $D(z)$,
\begin{gather*} A^+_n(z) \;=\; \int_{D(z)} \Biggl(\, \sum_{\substack{r,s \geq 0\\ 2s \leq n}} \sum_{\alpha \in \SG_{2s}} \frac{(ix)^\alpha z^r}{(2s)!}\, |\mathcal V_r(\alpha) | \Biggr)\, \mu_{n}^+(z) \;+\;\mathcal R^{(1)}_n(z) \;+\; \mathcal R^{(2)}_n(z)\,. \end{gather*}
Observe that if $\varepsilon(z) = z^{-\frac{5}{12}}/8$ 
and $z\geq 2^{36}$, then $\varepsilon(z)^2 z = z^{\frac16}/8^2 \geq 2^6/8^2 = 1$.
So, as $D(z) \supset B_n(z^{-\frac{5}{12}}/8)$, we may use Corollary~\ref{cor:ballfullint} to estimate the error term, 
 $|\mathcal R^{(2)}_n(z)| \leq C'_n \exp( -z^{\frac{1}{12}}/8)$. 

Recall the definition of $V$ in Eq.~\eqref{eq:defV} and its relation to the sum under the integral sign above due to Proposition~\ref{prop:genfun}.
For any $(x_1,\ldots,x_n) \in B_{n}(z^{-\frac{5}{12}}/2)$,
 we have $|x_k| \leq k 2^{-k}z^{-\frac{5}{12}k} \leq z^{-\frac{5}{12}k}$
for all $k\in \{1,\ldots,n\}$. So, if $(x_1,\ldots,x_n) \in D(z) \subset B_{n}(z^{-\frac{5}{12}}/2)$ and $z \geq 2^{12}$, then the conditions for Lemma~\ref{lmm:Vsubs} are fulfilled, and it is reasonable to write
\begin{gather*} A^+_n(z) = \int_{D(z)} \exp\left( \sum_{k = 1}^{n} \frac{z^k}{k} V(i x_{k},i x_{2k},\ldots,ix_{\lfloor n \rfloor_k},0,\ldots ) \right) \mu_{n}^+(z) + \mathcal R^{(1)}_n(z) + \mathcal R^{(2)}_n(z) + \mathcal R^{(3)}_n(z), \end{gather*}
where $\lfloor n \rfloor_k$ denotes the largest multiple of $k$ that does not exceed $n$. Lemma~\ref{lmm:Vsubs} provides an estimate of the third error term $|\mathcal R^{(3)}_n(z)| \leq \left(\frac87 \right)^2 \left( {2^{12}/z} \right)^{\frac{n+1}{12}}$
for $z \geq 2^{12}$.

By expanding the exponents in the definitions of $V$ in Eq.~\eqref{eq:defV} and the Gaussian measure $\mu_n^+(z)$ in Eq.~\eqref{eq:defmu} while using the definition of the action function $\mathcal S_n^+$ in Eq.~\eqref{eq:action}, we recover the partition function $I^+_n(z,D)$ as defined in Eq.~\eqref{eq:partitionfunc}, in the integral above. Hence, 
\begin{align*} A^+_n(z) \;=\; I^+_n(z,D(z)) \;+\; \mathcal R^{(1)}_n(z) \;+\; \mathcal R^{(2)}_n(z) \;+\; \mathcal R^{(3)}_n(z)\,. \end{align*}
The statements follow as the dominant error 
term for $z\rightarrow \infty$ is $\mathcal R^{(3)}_n(z)$, which falls off as $z^{-\frac{n+1}{12}}$ for large $z$,
and by definition,
$ A^+_n(z) = \sum_{k=0}^{\lfloor \frac{n}{6}\rfloor} \chi^+_k z^{-k} = \sum_{k=0}^{\lfloor \frac{n}{12}\rfloor} \chi^+_k z^{-k} +\bigO(z^{-\frac{n+1}{12}})$ as $z \rightarrow \infty$.
\end{proof}

\section{A variable transformation}
\label{sec:variabletrafo}

This section will show that a specific choice for the integration domain $D$ for the partition function~\eqref{eq:partitionfunc} results in a convenient simplification. These simplified partition functions indexed by $n$ and dependent on two parameters, $z$ and $\varepsilon$, will be denoted as $J_n^\pm(z, \varepsilon)$.

Recall that we defined $M_k(z) = \frac{1}{k} \sum_{d | k} \mu(d) z^{k/d}$ in Section~\ref{sec:introgenfun}. Again, we define two \emph{action}-type functions, which depend on an integer index $k \geq 1$ and variables $z,y$,
\begin{align} \begin{aligned} \label{eq:ssmalldef} s_k^+(z,y) &\;=\; \phantom{-}\; \frac{z^k}{k} \left( e^{i y} \;-\; 1 \right) \;-\; iy M_k(z) \\
s_k^-(z,y) &\;=\; -\; \frac{z^k}{k} \left( e^{\phantom{i} y} \;-\; 1 \right) \;+\; \phantom{i}y M_k(z)\,. \end{aligned} \end{align}
The functions $J_n^\pm$ are defined as
\begin{align} \label{eq:Jdef} J_n^\pm(z, \varepsilon)\;=\; \prod_{k=1}^n \biggl( \exp\Bigl(\pm \,\frac{\delta_{2|k}}{2k} \Bigr)\, \int_{-k \varepsilon^k}^{k\varepsilon^k} \exp\left( s_k^\pm(z,y) \right)\, \frac{\dd y}{\sqrt{2 \pi k / z^k}} \biggr)\,. \end{align}

To relate the integrals $I^\pm_n$ from Eq.~\eqref{eq:partitionfunc} 
with the ones above, we define a linear map
$ \iota_n : \R^n \rightarrow \R^n $
such that $y = \iota_n(x)$ is expressed coordinate-wise as
$y_k = \sum_{\ell=1}^{\lfloor n/k\rfloor} \frac{x_{k\cdot \ell}}{\ell}$.
The map $\iota_n$ is bijective, and the inverse transformation 
$x = \iota_n^{-1}(y)$ is given coordinate-wise by 
$x_k = \sum_{\ell=1}^{\lfloor n/k\rfloor} \mu(\ell) \frac{y_{k\cdot \ell}}{\ell}$. 

For each $\varepsilon > 0$, let $K_n(\varepsilon) = \{ (y_1,\ldots,y_n) \in \R^n : |y_k| \leq k \varepsilon^k \text{ for } k \in \{1,\ldots n\}\}$. As $K_n(\varepsilon)$ is a convex set, its preimage, $\iota_n^{-1}(K_n(\varepsilon))$, is too.
Further, it is easy to see that triangular matrices with $1$'s on the diagonal represent $\iota_n$ and $\iota_n^{-1}$. Therefore, $\det(\iota_n) = \det(\iota_n^{-1}) =1$.

\begin{proposition}
\label{prop:intTrafo}
Fix $n \geq 1$, and let
$D(\varepsilon) = \iota_n^{-1}(K_n(\varepsilon)) \subset \R^n$, 
then $J^\pm_n(z,\varepsilon) = I^\pm_n(z,D(\varepsilon))$.
\end{proposition}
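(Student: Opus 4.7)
The plan is to perform the linear change of variables $y=\iota_n(x)$ under the integral defining $I_n^\pm(z,D(\varepsilon))$ and observe that both the integrand and the domain factorize into a product of $n$ one-dimensional pieces, one for each index $k$, matching exactly the definition of $J_n^\pm(z,\varepsilon)$.

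First, since $\iota_n$ is represented by a triangular matrix with $1$'s on the diagonal, $|\det \iota_n|=1$, so the change of variables produces no Jacobian factor. By the very definition of $D(\varepsilon) = \iota_n^{-1}(K_n(\varepsilon))$, the new domain is the product box $K_n(\varepsilon)=\prod_{k=1}^n [-k\varepsilon^k, k\varepsilon^k]$. The normalizing prefactors $\prod_k \exp(\pm \delta_{2|k}/(2k))/\sqrt{2\pi k/z^k}$ are identical in the two definitions, so I only need to match the exponent.

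Next, I rewrite $\mathcal S^\pm_n$ in $y$-coordinates. By construction of $\iota_n$, the inner sum in the exponent of \eqref{eq:action} is precisely $y_k$, so for the $+$ case
\[
\mathcal S^+_n = \sum_{k=1}^n \frac{z^k}{k}\bigl(e^{i y_k}-1\bigr) \; - \; i\sum_{k=1}^n \frac{z^k}{k}x_k .
\]
The linear piece $\sum_k (z^k/k)x_k$ needs to be re-expressed in $y$. Substituting the Möbius-inverted $x_k=\sum_{\ell\leq n/k}\mu(\ell)y_{k\ell}/\ell$ and setting $m=k\ell$, the double sum collapses to
\[
\sum_{k=1}^n \frac{z^k}{k}x_k \;=\; \sum_{m=1}^n \frac{y_m}{m}\sum_{k\mid m}\mu(m/k)z^k \;=\; \sum_{m=1}^n y_m\, M_m(z),
\]
where the second equality is just the definition of $M_m(z)$ after the change of divisor index $d=m/k$. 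Hence $\mathcal S^+_n(z,x)=\sum_{k=1}^n s_k^+(z,y_k)$, and an identical computation (using $e^{y}$ instead of $e^{iy}$ and opposite overall signs) gives $\mathcal S^-_n(z,x)=\sum_{k=1}^n s_k^-(z,y_k)$.

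Finally, because the transformed exponent splits as a sum of one-variable functions and the transformed domain $K_n(\varepsilon)$ is a product of intervals, the integral factorizes into a product of $n$ one-dimensional integrals, each of which is exactly a factor in the definition~\eqref{eq:Jdef} of $J_n^\pm(z,\varepsilon)$. This yields the claimed identity. The main thing to get right is the Möbius-inversion bookkeeping that identifies $\sum_k (z^k/k)x_k$ with $\sum_k M_k(z)y_k$; everything else is direct unwinding of definitions.
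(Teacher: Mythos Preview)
Your proof is correct and follows essentially the same approach as the paper: change variables via $\iota_n$, note the trivial Jacobian, observe that the exponential terms become $e^{\pm i y_k}$, and use M\"obius inversion to rewrite the linear term $\sum_k (z^k/k)x_k$ as $\sum_k M_k(z)y_k$. The paper's proof is more terse but singles out exactly the same M\"obius computation as the only nontrivial step.
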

\begin{proof}
To recover Eq.~\eqref{eq:Jdef},
change variables to 
$y_k = \sum_{\ell =1}^{\lfloor n/k \rfloor} \frac{x_{\ell k}}{\ell}$
in the partition integral Eq.~\eqref{eq:partitionfunc}. As $\det(\iota_n) =1$, the Jacobian is trivial. The only nontrivial computation is 
$\sum_{k=1}^n \frac{z^k}{k} x_k = \sum_{k=1}^n \frac{z^k}{k} \sum_{\ell=1}^{\lfloor n/k \rfloor} \mu(\ell) \frac{y_{k \ell}}{\ell} = \sum_{k=1}^n y_k M_k(z)$, where we interchanged the order of summation.
\end{proof}

The main technical result of this section is that the choice of integration domain $D(\varepsilon) = \iota_n^{-1} (K_n(\varepsilon))$ also fulfills the requirements of Theorem~\ref{thm:integral} with a specific choice for $\varepsilon$.
\begin{theorem}
\label{thm:BnKnBn}
For all $n \geq 1$ and $\varepsilon \leq \frac13$,
we have $B_n(\frac34 \varepsilon) \subset \iota_n^{-1} (K_n(\varepsilon)) \subset B_n(3\varepsilon)$.
\end{theorem}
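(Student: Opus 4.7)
The proof splits into the two inclusions, and both reduce to elementary estimates using the explicit triangular formulas for $\iota_n$ and $\iota_n^{-1}$ stated just before the theorem, together with the bound $|\mu(\ell)|\leq 1$ on the M\"obius function. The plan is to estimate each coordinate of $y=\iota_n(x)$ (or $x=\iota_n^{-1}(y)$) by the triangle inequality, factor out a leading $k\varepsilon^k$, and then control the remaining geometric-type tail using the constraint $\varepsilon\leq 1/3$.

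For the first inclusion $B_n(\tfrac34\varepsilon)\subset \iota_n^{-1}(K_n(\varepsilon))$, I would set $\delta=\tfrac34\varepsilon$, take $x\in B_n(\delta)$, and write
\[
|y_k|\leq \sum_{\ell=1}^{\lfloor n/k\rfloor}\frac{|x_{k\ell}|}{\ell}
= k\delta^k\sum_{\ell=1}^{\lfloor n/k\rfloor}\delta^{k(\ell-1)}\,\frac{|x_{k\ell}|}{k\ell\,\delta^{k\ell}}.
\]
Since $\delta\leq 1/4<1$, each factor $\delta^{k(\ell-1)}\leq 1$, so the sum is bounded by $\sum_{j=1}^n|x_j|/(j\delta^j)\leq 1$. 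Hence $|y_k|\leq k\delta^k\leq k\varepsilon^k$, i.e. $y\in K_n(\varepsilon)$.

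For the second inclusion $\iota_n^{-1}(K_n(\varepsilon))\subset B_n(3\varepsilon)$, I would apply the M\"obius-inverted formula $x_k=\sum_\ell \mu(\ell)\, y_{k\ell}/\ell$, estimate using $|\mu(\ell)|\leq 1$ and $|y_{k\ell}|\leq k\ell\,\varepsilon^{k\ell}$, and sum the resulting geometric series:
\[
|x_k|\leq k\sum_{\ell\geq 1}\varepsilon^{k\ell}
= \frac{k\,\varepsilon^k}{1-\varepsilon^k}.
\]
Dividing by $k(3\varepsilon)^k$ gives $|x_k|/(k(3\varepsilon)^k)\leq 1/(3^k(1-\varepsilon^k))$, and the assumption $\varepsilon\leq 1/3$ yields $1-\varepsilon^k\geq 2/3$, so each term is at most $3/(2\cdot 3^k)$. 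Summing over $k\geq 1$ gives $3/4\leq 1$, confirming $x\in B_n(3\varepsilon)$.

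I do not foresee a serious obstacle: the constants $\tfrac34$ and $3$ in the statement, together with the hypothesis $\varepsilon\leq 1/3$, are exactly calibrated so that the geometric tails close with margin to spare. The only point requiring mild care is interchanging the order of summation (which is a finite sum here) and the uniform use of $|\mu(\ell)|\leq 1$, both of which are routine.
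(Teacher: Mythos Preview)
Your proof is correct. The first inclusion is essentially the paper's argument, though you exploit the full $B_n$-constraint $\sum_j |x_j|/(j\delta^j)\leq 1$ rather than only its coordinate-wise consequence $|x_j|\leq j\delta^j$; both routes give $|y_k|\leq k\varepsilon^k$ with room to spare.

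The second inclusion is where you genuinely diverge. The paper does not use the explicit M\"obius-inverted formula for $\iota_n^{-1}$ at this point; instead it proves an auxiliary lemma (Lemma~\ref{lmm:KnBound}) by downward induction on $k$: from $|\sum_\ell x_{k\ell}/\ell|\leq k\varepsilon^k$ one bootstraps the bound $|x_k|\leq 2k\varepsilon^k$, and then sums $\sum_k 2/3^k\leq 1$. Your approach plugs in $x_k=\sum_\ell \mu(\ell)y_{k\ell}/\ell$ directly, uses $|\mu(\ell)|\leq 1$ and $|y_{k\ell}|\leq k\ell\,\varepsilon^{k\ell}$, and lands on the same geometric tail. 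This is shorter and avoids the inductive lemma entirely; the paper's route, on the other hand, works from the defining relations alone and does not rely on having the closed form for $\iota_n^{-1}$ in hand. Since that closed form is stated just before the theorem, your use of it is legitimate and yields a cleaner argument.
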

\begin{corollary}
\label{cor:Jres}
For fixed $n \geq 1$,
\begin{align*} J^\pm_n(z,z^{-\frac{5}{12}}/6) \;=\; \sum_{k=0}^{\lfloor n/12 \rfloor} \chi^\pm_k \,z^{-k} \;+\;\bigO(z^{-\frac{n+1}{12}}) \text{ as } z \rightarrow \infty\,. \end{align*}
\end{corollary}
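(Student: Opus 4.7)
The plan is to obtain Corollary~\ref{cor:Jres} as a direct composition of the three preceding results, Proposition~\ref{prop:intTrafo}, Theorem~\ref{thm:BnKnBn}, and Theorem~\ref{thm:integral}. The corollary is essentially a bookkeeping statement: the specific choice $\varepsilon = z^{-5/12}/6$ is engineered so that the integration domain $\iota_n^{-1}(K_n(\varepsilon))$ arising from the variable transformation $\iota_n$ lands exactly inside the window of admissible domains permitted by Theorem~\ref{thm:integral}.

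Concretely, I would first apply Proposition~\ref{prop:intTrafo} to rewrite
\[
J_n^\pm\bigl(z,\,z^{-5/12}/6\bigr) \;=\; I_n^\pm\bigl(z,\,D(z)\bigr),
\qquad D(z) := \iota_n^{-1}\bigl(K_n(z^{-5/12}/6)\bigr).
\]
Next I would invoke Theorem~\ref{thm:BnKnBn} with $\varepsilon = z^{-5/12}/6$. The hypothesis $\varepsilon \leq 1/3$ of that theorem holds for all sufficiently large $z$ (specifically $z \geq 2^{-12/5}$), and the conclusion reads $B_n\bigl(\tfrac{3}{4}\varepsilon\bigr) \subset D(z) \subset B_n(3\varepsilon)$. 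The key arithmetic observation is that
\[
\tfrac{3}{4}\cdot\tfrac{z^{-5/12}}{6} \;=\; \tfrac{z^{-5/12}}{8}
\qquad\text{and}\qquad
3\cdot\tfrac{z^{-5/12}}{6} \;=\; \tfrac{z^{-5/12}}{2},
\]
so that the sandwich becomes $B_n(z^{-5/12}/8) \subset D(z) \subset B_n(z^{-5/12}/2)$, precisely matching the hypothesis of Theorem~\ref{thm:integral}.

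Finally, Theorem~\ref{thm:integral} applied to this $D(z)$ yields
\[
I_n^\pm(z,D(z)) \;=\; \sum_{k=0}^{\lfloor n/12\rfloor} \chi_k^\pm \, z^{-k} \;+\; \bigO\!\bigl(z^{-(n+1)/12}\bigr) \text{ as } z\to\infty,
\]
which delivers the desired asymptotic expansion for $J_n^\pm(z, z^{-5/12}/6)$.

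There is no substantive obstacle here: the calibration $\varepsilon = z^{-5/12}/6$ was chosen exactly so that the shrinkage and expansion constants $3/4$ and $3$ from Theorem~\ref{thm:BnKnBn} carry $K_n(\varepsilon)$ (through $\iota_n^{-1}$) into the admissible window $[z^{-5/12}/8,\,z^{-5/12}/2]$ of Theorem~\ref{thm:integral}. Accordingly, the proof is a one-paragraph chaining argument, with the only point worth noting being the verification that the tightening factor $3/4$ and the loosening factor $3$ combine with $1/6$ to give exactly $1/8$ and $1/2$.
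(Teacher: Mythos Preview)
Your proof is correct and follows exactly the same route as the paper: set $D(z)=\iota_n^{-1}(K_n(z^{-5/12}/6))$, use Theorem~\ref{thm:BnKnBn} together with the arithmetic $\tfrac34\cdot\tfrac16=\tfrac18$ and $3\cdot\tfrac16=\tfrac12$ to obtain the sandwich $B_n(z^{-5/12}/8)\subset D(z)\subset B_n(z^{-5/12}/2)$, and then conclude via Proposition~\ref{prop:intTrafo} and Theorem~\ref{thm:integral}.
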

\begin{proof}
Let $\varepsilon(z) = z^{-\frac{5}{12}}/6$ and 
$D(z) = \iota_n^{-1}(K_n(\varepsilon(z)))$. Then,
by Theorem~\ref{thm:BnKnBn},
$B_n(z^{-\frac{5}{12}}/8) \subset D(z) \subset B_n(z^{-\frac{5}{12}}/2)$.
So, the statement follows from 
 Proposition~\ref{prop:intTrafo} and Theorem~\ref{thm:integral}.
\end{proof}

The proof of Theorem~\ref{thm:BnKnBn} will occupy the remainder of this section.

\begin{lemma}
\label{lmm:KnBound}
If $n\geq 1$, $\varepsilon \leq \frac{1}{3}$ and $(x_1, \ldots,x_n) \in \iota^{-1}(K_n(\varepsilon))$, then $|x_k| \leq 2 k\varepsilon^k$ for $k\in \{1,\ldots,n\}$.
\end{lemma}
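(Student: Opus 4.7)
The plan is to use the explicit Möbius-inversion formula $x_k = \sum_{\ell=1}^{\lfloor n/k\rfloor} \mu(\ell)\, y_{k\ell}/\ell$ given just above the statement, and bound $|x_k|$ term by term using the bound $|y_j|\leq j\varepsilon^j$ that follows from $(y_1,\ldots,y_n) \in K_n(\varepsilon)$.

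First, I would set $y=\iota_n(x) \in K_n(\varepsilon)$ and apply the triangle inequality together with the crude bound $|\mu(\ell)|\leq 1$ to obtain
\begin{align*}
|x_k| \leq \sum_{\ell=1}^{\lfloor n/k\rfloor} \frac{|y_{k\ell}|}{\ell} \leq \sum_{\ell=1}^{\lfloor n/k\rfloor} \frac{k\ell\, \varepsilon^{k\ell}}{\ell} = k\sum_{\ell=1}^{\lfloor n/k\rfloor} \varepsilon^{k\ell}.
\end{align*}

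Then I would extend the sum to a geometric series over all $\ell\geq 1$, getting $|x_k| \leq k\,\varepsilon^k/(1-\varepsilon^k)$. The hypothesis $\varepsilon \leq 1/3$ implies $\varepsilon^k \leq 1/3 \leq 1/2$ for every $k\geq 1$, so $1/(1-\varepsilon^k)\leq 2$, which yields $|x_k|\leq 2k\varepsilon^k$ as desired.

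There is no real obstacle here; the only conceptual point is that the Möbius coefficients in the explicit inverse of $\iota_n$ are absolutely bounded by $1$, so the problem reduces immediately to summing a geometric series and using $\varepsilon \leq 1/3$ to absorb the factor $1/(1-\varepsilon^k)$ into a universal constant ($2$). The looseness in the constant $2$ is what allows the symmetric factor in the subsequent Theorem~\ref{thm:BnKnBn} (with the pair $3/4$ and $3$).
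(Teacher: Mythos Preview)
Your proof is correct and, in fact, more direct than the paper's. The paper states the explicit M\"obius-inversion formula $x_k = \sum_{\ell=1}^{\lfloor n/k\rfloor}\mu(\ell)\,y_{k\ell}/\ell$ in the paragraph preceding the lemma, but then does \emph{not} use it in the proof. Instead, the paper works only with the forward relation $y_k = \sum_{\ell}x_{k\ell}/\ell$, extracts the inequality
\[
|x_k|\leq k\varepsilon^k + \sum_{\ell=2}^{\lfloor n/k\rfloor}\frac{|x_{k\ell}|}{\ell},
\]
and proves $|x_k|\leq 2k\varepsilon^k$ by a descending induction on $k$ (the base case $k=n$ is immediate; the step halves the lower end of the range of $k$'s for which the bound is known). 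Your approach bypasses the induction entirely by plugging the $K_n(\varepsilon)$-bounds on $y$ directly into the inverse formula and summing a geometric series; since the M\"obius coefficients satisfy $|\mu(\ell)|\leq 1$, this immediately gives $|x_k|\leq k\varepsilon^k/(1-\varepsilon^k)\leq 2k\varepsilon^k$. Your route is shorter and makes the r\^ole of the hypothesis $\varepsilon\leq 1/3$ more transparent (it is only used to bound $1/(1-\varepsilon^k)$); the paper's inductive argument would generalize more readily to situations where no closed-form inverse of $\iota_n$ is available, but that generality is not needed here.
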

\begin{proof}
Suppose  $n$, $\varepsilon$, and $(x_1, \ldots,x_n)$ are given as required.
The definitions of $\iota_n$ and $K_n(\varepsilon)$ imply that
$ \left| \sum_{\ell=1}^{\lfloor n/k \rfloor} \frac{x_{k\cdot \ell}}{\ell} \right| \leq k \varepsilon^k$ for all $k\in\{1,\ldots, n\}$.
It follows that 
\begin{align} \label{eq:varep} |x_k| \;\leq\; k\varepsilon^k \;+\; \sum_{\ell=2}^{\lfloor n/k \rfloor} \frac{|x_{k\cdot \ell}|}{\ell} \text{ for all }k \in \{1,\ldots,n\}\,. \end{align}
We will use this inequality to prove the lemma via induction.
First, observe that Eq.~\eqref{eq:varep} implies $|x_n| \leq n \varepsilon^n \leq 2 n \varepsilon^n$.
Assume that for given $m \in \{1,\ldots,n\}$, we have
$|x_k| \leq 2k\varepsilon^k$ for all $k\in \{m,\ldots,n\}$.
It follows from Eq.~\eqref{eq:varep} that for all $k$ with $\frac{m}{2} \leq k \leq n$, 
$$|x_k| \leq k \varepsilon^k+\sum_{\ell=2}^{\lfloor n/k \rfloor} \frac{|x_{k\cdot \ell}|}{\ell} \leq 
k \varepsilon^k+2k \sum_{\ell=2}^{\lfloor n/k \rfloor} \varepsilon^{k\ell} =
k\varepsilon^k\left(1+2\sum_{\ell=2}^{\lfloor n/k \rfloor} \varepsilon^{k(\ell-1)} \right)
\leq
k\varepsilon^k\left(1+2\sum_{\ell=1}^{\infty} \varepsilon^{k\ell} \right),
$$
where we used the assumption in the second step.
Using $\varepsilon^k \leq \frac{1}{3}$ and $\sum_{\ell=1}^\infty \frac{1}{3^\ell} =\frac{1}{2}$ proves that $|x_k| \leq 2k\varepsilon^k$ for all $k$ with $\frac{m}{2}\leq k \leq n$, which inductively implies the statement.
\end{proof}

\begin{proof}[Proof of Theorem~\ref{thm:BnKnBn}]
We start by proving $B_n(\frac34 \varepsilon) \subset \iota_n^{-1} (K_n(\varepsilon))$.
Fix $(x_1,\ldots,x_n) \in B_n(\frac34 \varepsilon)$ which means that
$\sum_{k=1}^n \frac{|x_k|}{k(3\varepsilon/4)^k} \leq 1$ implying 
$|x_k| \leq k (3\varepsilon/4)^k$ for $k \in \{1,\ldots,n\}$.
Hence, 
$\left|\sum_{\ell=1}^{\lfloor n/k \rfloor} \frac{x_{k\cdot \ell}}{\ell} \right| \leq k\sum_{\ell=1}^{\lfloor n/k \rfloor} (3\varepsilon/4)^{k\ell} \leq k\sum_{\ell=1}^\infty (3\varepsilon/4)^{k\ell} = k\frac{(3\varepsilon/4)^k}{1-(3\varepsilon/4)^k} $
for all $k\in \{1,\ldots,n\}$.
The inclusion $B_n(3\varepsilon/4) \subset \iota_n^{-1} (K_n(\varepsilon))$ follows, as $\varepsilon \leq \frac13$ implies
$(\frac{3}{4})^{k}/(1-(3\varepsilon/4)^k) \leq (\frac{3}{4})^k \frac{4}{3} \leq 1.$

To prove $\iota_n^{-1}( K_n(\varepsilon)) \subset B_n(3 \varepsilon)$,
fix 
$(x_1,\ldots,x_n) \in \iota_n^{-1}( K_n(\varepsilon))$, which, by Lemma~\ref{lmm:KnBound}, implies 
$|x_k| \leq 2 k \varepsilon^k$ for all $k\in \{1,\ldots,n\}$ and therefore
$\sum_{k=1}^n \frac{|x_k|}{k (3\varepsilon)^k} \leq 2\sum_{k=1}^n \frac{1}{3^k} \leq 2\sum_{k=1}^\infty \frac{1}{3^k} = 1.$
\end{proof}

\section{Evaluating the partition functions}
\label{sec:matching}

This section will prove Theorem~\ref{thm:genfun} using the previously established asymptotic expansions. 
The strategy is to use a known asymptotic expansion for the classical $\Gamma$ function to find a more explicit expression for the $z\rightarrow \infty$ asymptotic expansion of the integrals in Eq.~\eqref{eq:Jdef}. Corollary~\ref{cor:Jres} then relates this more explicit asymptotic expression to the numbers $\chi_n^\pm$.
It is convenient to define the following notation for the part of the expression in Eq.~\eqref{eq:PsiDef} without the Bernoulli numbers:
\begin{align} \label{eq:Gdef} G_k^\pm(z) \;=\; \pm\; \left( (1\;-\;L_k(z))\, M_k(z) \;-\; \frac{z^k}{k} \;+\; \frac{\delta_{2|k}}{2k} \right) \;-\;\frac12 \,L_k(z) \end{align}

We will first prove the first part of Theorem~\ref{thm:genfun}, which states that $\Psi_k^\pm(z) \in z^{-\lceil \frac{k}{6} \rceil}\Q[[\frac{1}{z}]]$. The following two lemmas imply it.

\begin{lemma}
\label{eq:Mkpos2}
If $k\geq 1$, $u\in \C$ such that $|u| \leq \frac14$, then
$|k u^k M_k(\frac{1}{u})| \geq \frac12$.
\end{lemma}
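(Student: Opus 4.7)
The plan is to expand $k u^k M_k(1/u)$ using the definition of $M_k$ and show that the $d=1$ term dominates. By the definition of $M_k$, we have
\[
k u^k M_k(1/u) = u^k \sum_{d \mid k} \mu(d) u^{-k/d} = \sum_{d \mid k} \mu(d)\, u^{\,k - k/d}.
\]
The divisor $d = 1$ contributes $\mu(1) u^{0} = 1$, so the strategy is to bound the remaining terms (those with $d \geq 2$) by something strictly smaller than $\tfrac12$ and then conclude by the triangle inequality.

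For the bound, I would observe that if $d \mid k$ and $d \geq 2$, then $k/d$ is a positive integer at most $k/2$, so the exponent $k - k/d$ is an integer in the range $[\lceil k/2\rceil,\, k-1]$. Crucially, these exponents are distinct as $d$ varies over the divisors of $k$ with $d \geq 2$ (since the map $d \mapsto k/d$ is injective on divisors). Therefore the triangle inequality together with $|\mu(d)| \leq 1$ gives
\[
\left| k u^k M_k(1/u) - 1 \right|
\;\leq\; \sum_{\substack{d \mid k \\ d \geq 2}} |u|^{\,k - k/d}
\;\leq\; \sum_{e = \lceil k/2 \rceil}^{\infty} |u|^{e}
\;=\; \frac{|u|^{\lceil k/2 \rceil}}{1-|u|}.
\]

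For $k = 1$ the sum over $d \geq 2$ is empty, so $|k u^k M_k(1/u)| = 1 \geq \tfrac12$. For $k \geq 2$ one has $\lceil k/2 \rceil \geq 1$, and since $|u| \leq \tfrac14$ the tail is at most
\[
\frac{(1/4)^{1}}{1 - 1/4} \;=\; \frac{1}{3}.
\]
A final application of the reverse triangle inequality yields $|k u^k M_k(1/u)| \geq 1 - \tfrac13 = \tfrac{2}{3} \geq \tfrac12$, completing the argument.

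I don't anticipate any serious obstacle; the only subtlety worth stating explicitly is the injectivity of $d \mapsto k - k/d$ on divisors $d \geq 2$ of $k$, which justifies replacing the divisor sum by a geometric tail over all integers $e \geq \lceil k/2 \rceil$. The bound $\tfrac14$ on $|u|$ is what makes the geometric ratio small enough, and in fact gives the slightly stronger estimate $|k u^k M_k(1/u)| \geq \tfrac23$, which may be useful in subsequent applications of this lemma.
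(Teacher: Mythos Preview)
Your proof is correct and follows essentially the same approach as the paper: isolate the $d=1$ term (which equals $1$) and bound the remaining divisor sum by the triangle inequality. The only difference is in the tail estimate—the paper bounds the remainder crudely by $k|u|^{k/2}$ (at most $k$ terms, each of size at most $|u|^{k/2}$) and then invokes $k\leq 2^{k-1}$, whereas you use the distinctness of the exponents to dominate by a geometric series, which is slightly cleaner and yields the sharper constant $\tfrac23$.
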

\begin{proof}
Recall that $M_k(z) = \frac{1}{k} \sum_{d|k} \mu(d) z^{\frac{k}{d}}$
and hence $ku^k M_k(1/u) = 1+ \sum_{d|k,d\neq 1} \mu(d) u^{k(1-\frac{1}{d})}$.
Because 
$\mu(1) = 1$, $|\mu(d)|\leq 1$, and $|u| \leq 1$,
we have 
$|ku^k M_k(1/u)| \geq 1 - k|u|^{\frac{k}{2}}$.
For all $k \geq 1$, the inequality $k \leq 2^{k-1}$ holds.
Hence, 
if $|u| \leq \frac14$, then
$k |u|^{\frac{k}{2}} \leq \frac{k}{2^k} \leq \frac12$.
\end{proof}

\begin{lemma}
\label{lmm:Ganalytic}
For $k \geq 1$, the function $u\mapsto G_k^\pm(1/u)$ is
holomorphic in the disc $\{u\in \C : |u| \leq \frac18\}$. Moreover,
there is a constant $C$ such that 
$|G_k^\pm(1/u)| \leq C |u|^{\lceil\frac{k}{6}\rceil}$ for all $u$ inside that domain.
\end{lemma}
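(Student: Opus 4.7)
The plan is to establish two facts about $h_k^\pm(u):=G_k^\pm(1/u)$: (i) it is holomorphic on a disc slightly larger than $|u|\leq\tfrac18$, and (ii) it vanishes at $u=0$ to order at least $\lceil k/6\rceil$. The stated bound $|h_k^\pm(u)|\leq C|u|^{\lceil k/6\rceil}$ then follows by writing $h_k^\pm(u)=u^{\lceil k/6\rceil}H(u)$, with $H$ holomorphic on the larger disc and therefore bounded on the compact subdisc $|u|\leq\tfrac18$.

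For holomorphy, introduce $W_k(z):=kM_k(z)/z^k$. Substituting $z=1/u$ yields
\begin{equation*}
W_k(1/u)=1+\sum_{d\mid k,\,d>1}\mu(d)\,u^{k(1-1/d)},
\end{equation*}
which is a polynomial in $u$, hence entire. Lemma~\ref{eq:Mkpos2} gives $|W_k(1/u)|\geq\tfrac12$ on $|u|\leq\tfrac14$, so $L_k(1/u)=\log W_k(1/u)$ is holomorphic there. Rewriting
\begin{equation*}
(1-L_k(z))M_k(z)-\frac{z^k}{k}=\frac{z^k}{k}\bigl(f(W_k(z))-1\bigr),\qquad f(w):=w(1-\log w),
\end{equation*}
and noting $f$ is analytic at $w=1$, shows $h_k^\pm$ is holomorphic on $|u|\leq\tfrac14$.

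The heart of the proof is the vanishing order at $u=0$. Set $v:=W_k(1/u)-1$; it vanishes to order $e_1:=k(1-1/q)$, where $q$ is the smallest prime divisor of $k$. Using $f(w)-1=-\int_1^w\log t\,dt=-\tfrac12(w-1)^2+O((w-1)^3)$, $f(W_k)-1$ vanishes to order $\geq 2e_1$, and $-\tfrac12 L_k=-\tfrac12\log(1+v)$ vanishes to order $\geq e_1\geq k/2\geq\lceil k/6\rceil$. For the bracketed piece $\tfrac{z^k}{k}(f(W_k)-1)+\tfrac{\delta_{2|k}}{2k}$, I split on parity. If $k$ is odd ($q\geq3$), then $\delta_{2|k}=0$ and the order is $\geq 2e_1-k=k(q-2)/q\geq k/3\geq\lceil k/6\rceil$ (the case $k=1$ is trivial since $W_1=1$). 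If $k$ is even, the $u^k$-term in $-v^2/2$, which equals $-u^k/2$ since $\mu(2)=-1$, contributes $-1/(2k)$ to $\tfrac{z^k}{k}(f(W_k)-1)$, and this cancels exactly the added $\delta_{2|k}/(2k)=1/(2k)$. The remaining order is determined by cross terms in $v^2$ (when $k$ has an odd prime divisor $p$, giving exponent $k/2+k(1-1/p)-k=k(p-2)/(2p)$) or by $v^3$ (when $k$ is a power of two, giving $3k/2-k=k/2$).

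The main obstacle, and the content of the bound, is the uniform verification that $k(p-2)/(2p)\geq\lceil k/6\rceil$ in the even case. When $p=3$, both $2\mid k$ and $3\mid k$ force $6\mid k$, and $k(p-2)/(2p)=k/6=\lceil k/6\rceil$ with equality; this is the tight case that explains the ceiling in the lemma. When $p\geq5$, one has $k\geq 2p\geq 10$, and the estimate $k(p-2)/(2p)-k/6=k(p-3)/(3p)\geq 2(p-3)/3\geq 4/3>5/6$ more than compensates for the ceiling. The power-of-two subcase $k/2\geq\lceil k/6\rceil$ is immediate, completing the verification.
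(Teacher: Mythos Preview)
Your argument correctly establishes holomorphy on $|u|\le\tfrac14$ and, via the Taylor expansion of $f(w)=w(1-\log w)$ at $w=1$, the vanishing order $\ge\lceil k/6\rceil$ at $u=0$. The case analysis isolating $6\mid k$ as the tight case is a clean explanation for the exponent $\lceil k/6\rceil$ that the paper's more computational rearrangement does not make explicit.

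However, the final step has a genuine gap. Writing $h_k^\pm(u)=u^{\lceil k/6\rceil}H_k(u)$ with $H_k$ holomorphic on $|u|\le\tfrac14$ and invoking compactness only yields $|H_k|\le C_k$ on $|u|\le\tfrac18$ with a constant depending on $k$. The lemma, as it is used later (see the proof of Proposition~\ref{prop:GC_np1}, where one sums $\sum_{k,\ell\ge1}\frac{\mu(\ell)}{\ell}G_k^\pm(z^{-\ell})$ and needs uniform control to conclude convergence), requires a \emph{single} constant valid for all $k\ge1$; the paper's proof in fact produces the explicit value $C=\tfrac{14534}{343}$. To salvage your approach via the maximum principle you would need $\max_{|u|=1/4}|h_k^\pm(u)|\le C'\,4^{-\lceil k/6\rceil}$ uniformly in $k$, and establishing this amounts to the explicit estimates you have bypassed. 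The paper achieves uniformity by working with the auxiliary quantity $W_k=1-1/(ku^kM_k(1/u))$, proving the pointwise bounds $|W_k+\delta_{2|k}u^{k/2}|\le\tfrac{32}{7}|u|^{\lceil 2k/3\rceil}$ and $|W_k|\le\tfrac{11}{7}|u|^{\lceil k/2\rceil}$ on $|u|\le\tfrac18$, and then bounding each term of an algebraic decomposition of $G_k^\pm(1/u)$ directly; no compactness argument is invoked.
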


\begin{proof}
By Lemma~\ref{eq:Mkpos2}, $u\mapsto L_k(1/u) = \log(k u^k M_k(1/u))$ defines an 
holomorphic function for all $|u|\leq \frac14$.
It follows from Eq.~\eqref{eq:Gdef} that 
$u\mapsto G_k^\pm(1/u)$ is also holomorphic for all $|u| \leq \frac14$
with the possible exception of $u=0$ due to the $M_k(1/u)$ term, which has a pole at $u=0$.
We will prove that $G_k^\pm(1/u)$ is regular at $u=0$.
Let $W_k = 1-{1}/{(ku^kM_k(1/u))} $ and observe that 
$$
W_k\; =\; -\; \delta_{2|k}u^{\frac{k}{2}}
\;+\;
\frac{
(
ku^k M_k(\frac{1}{u}) -1 
+ \delta_{2|k} u^{\frac{k}{2}} )
+\delta_{2|k} u^{\frac{k}{2}}
\left(
k u^k M_k(\frac{1}{u})
-1
\right)
}
{
k u^k M_k(\frac{1}{u})
}\,.
$$
The denominator is bounded from below by $\frac12$ for all $|u|\leq \frac18 \leq \frac14$
due to Lemma~\ref{eq:Mkpos2}.
The numerator is a sum of two polynomials in $u^{\lceil \frac{2k}{3}\rceil}\Q[u]$ whose coefficients are $\pm 1$. 
Hence, 
$$|W_k + \delta_{2|k}\,u^{\frac{k}{2}}| 
\;\leq\; 2 u^{\lceil \frac{2k}{3}\rceil} (2 + 2u + 2u^2+\ldots) 
\;\leq\; 4 u^{\lceil \frac{2k}{3}\rceil}\frac{1}{1-u} 
\;\leq\; \frac{4}{1-2^{-3}}\, u^{\lceil \frac{2k}{3}\rceil}
\;\leq\;
\frac{32}{7}\, u^{\lceil \frac{2k}{3}\rceil}\,,
$$
and similarly 
$|W_k| \leq u^{\lceil \frac{k}{2}\rceil} (1 + 4u + 4u^2\ldots) \leq \frac{11}{7}u^{\lceil \frac{k}{2}\rceil}.$

Abbreviating $\ell_m(W_k) := \frac{1}{W_k^m}\left( \log \frac{1}{1-W_k} - \sum_{n=1}^{m-1} \frac{W_k^n}{n} \right) = \sum_{n=0}^\infty \frac{W_k^{n}}{n+m}$ allows 
us to write 
\begin{align*} G_k^\pm(1/u) \;=\; \mp\; \frac{u^{-k}}{k}\, W_k^3\, \frac{\ell_3(W_k)+\frac12} {1-W_k} \;\mp\; \frac{u^{-k}}{2k} \left(W_k +\delta_{2|k} \, u^{\frac{k}{2}} \right) \left(W_k - \delta_{2|k}\, u^{\frac{k}{2}} \right) \;-\;\frac12 W_k \,\ell_1(W_k) \,. \end{align*}
Each summand is regular at $u=0$ and falls off at least like $u^{\lceil \frac{k}{6}\rceil}$ for $u\rightarrow 0$ by the estimates for $W_k$ above.
Moreover, it follows from $k \geq 1, |u|\leq \frac18$ and  $|W_k| \leq \frac{11}{7}u^{\lceil \frac{k}{2}\rceil} \leq \frac{11}{7 \sqrt{8}} < \frac34$
that 
$|\ell_m(W_k)| \leq \sum_{n=0}^\infty\frac{|W_k|^n}{n+m}\leq \sum_{n=0}^\infty |W_k|^n = \frac{1}{1-|W_k|} \leq 4$ for all $m,k \geq 1$.
Therefore,
\begin{gather*} |G_k^\pm(1/u)| \;\leq\; 4\, \left(4+\frac12\right)\, \frac{|u|^{-k}}{k}\, |W_k|^3 \;+\; \frac{|u|^{-k}}{2k}\, \left|W_k +\delta_{2|k} \, u^{\frac{k}{2}} \right| \left(|W_k| + |u|^{\lceil \frac{k}{2} \rceil} \right) \;+\;\frac42 |W_k| \\
\leq\; \frac{18}{k}\, \left(\frac{11}{7}\right)^3|u|^{\lceil \frac{3k}{2}\rceil -k} \;+\; \frac{\frac{32}{7} \cdot (\frac{11}{7}+1)}{2k} |u|^{\lceil \frac{2k}{3}\rceil + \lceil \frac{k}{2} \rceil -k} \;+\; 2 \frac{11}{7}\, |u|^{\lceil \frac{k}{2}\rceil} \;\leq\; \frac{14534}{343}\, |u|^{\lceil \frac{k}{6} \rceil} \,.  \qedhere \end{gather*}
\end{proof}

To prove the expression for the Euler characteristics $\chi(\GC_\pm^g)$ in the statement of Theorem~\ref{thm:genfun}, we  define two integrals depending on three parameters,
\begin{align} \begin{aligned} \label{eq:Qdef} Q^+(z,\varepsilon,u) &\;=\; \int_{-\varepsilon+iu}^{\varepsilon+iu} e^{  z (e^{iy} -1 - iy)  } \frac{\dd y}{\sqrt{2 \pi/z}}, & Q^-(z,\varepsilon,u) &\;=\; \int_{-\varepsilon-u}^{\varepsilon-u} e^{  -z (e^{y}-1 -y)  } \frac{\dd y}{\sqrt{2 \pi/z}}\,, \end{aligned} \end{align}
because the integrands are entire functions in $y$, these integrals do not depend on the integration path and are  well-defined for all $\varepsilon \in \R_{>0}$ and $u \in \C$.
The proof of Theorem~\ref{thm:genfun} relies on the following two statements on the functions $Q^\pm$. 

\begin{proposition}
\label{prop:boundsbound}
If $z,\varepsilon,\varepsilon' \in \R_{>0}$, and $u \in \C$ with $|u| + |\varepsilon-\varepsilon'| \leq \frac{\varepsilon}{4} \leq \frac15$, and $z\varepsilon^3 \leq \frac{64}{125}$, then
\begin{align*} \left| Q^\pm(z,\varepsilon',u) - Q^\pm(z,\varepsilon,0) \right| \;\leq\; \frac{2}{5}\, e^{e}\, \frac{\exp(-\frac{7}{32}z\varepsilon^2) }{\sqrt{2 \pi/z}}\,. \end{align*}
\end{proposition}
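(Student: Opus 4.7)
The strategy is a straightforward contour shift. Since the integrands of $Q^\pm$ are entire in $y$, Cauchy's theorem expresses $Q^\pm(z,\varepsilon',u)-Q^\pm(z,\varepsilon,0)$ as the integral along two short connecting segments between the two sets of endpoints. In the $+$ case these are $\gamma_R$ from $\varepsilon$ to $\varepsilon'+iu$ and $\gamma_L$ from $-\varepsilon'+iu$ to $-\varepsilon$; in the $-$ case the segments connect $\pm\varepsilon$ to $\pm\varepsilon'-u$. Since $|iu|=|u|$ even for complex $u$, each segment has length at most $|\varepsilon-\varepsilon'|+|u|\leq \varepsilon/4$.

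The key step is a uniform pointwise bound on the integrand along these segments. Any point $y$ on either segment has the form $y=\pm\varepsilon+\delta$ with $\delta\in\C$, $|\delta|\leq\varepsilon/4$, so $|y|\leq 5\varepsilon/4\leq 1$ (using $\varepsilon\leq 4/5$). Two elementary estimates drive the argument. First, $\mathrm{Re}(y^2)=(\mathrm{Re}\,y)^2-(\mathrm{Im}\,y)^2\geq (3\varepsilon/4)^2-(\varepsilon/4)^2=\varepsilon^2/2$, and second, the cubic-and-higher Taylor remainders $R(y):=e^{iy}-1-iy+y^2/2$ and $\widetilde R(y):=e^y-1-y-y^2/2$ both satisfy $|R(y)|,|\widetilde R(y)|\leq |y|^3 e^{|y|}/6\leq \tfrac{125 e}{384}\varepsilon^3$. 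Combining these with the hypothesis $z\varepsilon^3\leq 64/125$, the exponent $z\cdot\mathrm{Re}(e^{iy}-1-iy)$ in the $+$ case and $-z\cdot\mathrm{Re}(e^y-1-y)$ in the $-$ case are both bounded above by
\[
-\frac{z\,\mathrm{Re}(y^2)}{2}+\frac{125\,z\varepsilon^3 e}{384}\leq -\frac{z\varepsilon^2}{4}+\frac{e}{6}\leq -\frac{7z\varepsilon^2}{32}+\frac{e}{6},
\]
where the final inequality is just $-8z\varepsilon^2/32\leq -7z\varepsilon^2/32$.

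Estimating each segment integral by (length)$\times$(maximum of integrand) and summing gives
\[
\bigl|Q^\pm(z,\varepsilon',u)-Q^\pm(z,\varepsilon,0)\bigr|\leq \frac{\varepsilon}{2}\cdot\frac{e^{e/6}\exp(-7z\varepsilon^2/32)}{\sqrt{2\pi/z}},
\]
which implies the stated estimate via the coarse inequality $\tfrac{\varepsilon}{2}e^{e/6}\leq \tfrac{2}{5}e^e$ (valid since $\varepsilon\leq 4/5$ and $e^{e/6}\leq e^e$). The main obstacle is the arithmetic balancing the quadratic lower bound against the cubic error so that the coefficient $7/32$ in the exponent emerges, and the combined hypotheses $\varepsilon/4\leq 1/5$ and $z\varepsilon^3\leq 64/125$ are tuned precisely to absorb the cubic remainder into a universal constant. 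Permitting complex $u$ introduces no further difficulty beyond the remark $|iu|=|u|$ and the fact that the bound on $\mathrm{Re}(y^2)$ is insensitive to the argument of the perturbation $\delta$.
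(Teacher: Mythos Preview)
Your proof is correct and follows essentially the same contour-shift strategy as the paper: both express the difference as two short connecting segment integrals and bound the integrand pointwise there. The organization differs slightly. The paper factors the integrand as $A(z,iy)\,e^{-zy^2/2}$ and invokes Lemma~\ref{lmm:Aestimate} with $R=0$ to get $|A(z,iy)|\leq e^e$, then estimates $|e^{-zy^2/2}|$ separately; you instead bound $\Re\bigl(z(e^{iy}-1-iy)\bigr)$ directly via the quadratic--cubic split, which is a little more self-contained. Your lower bound $\Re(y^2)\geq(3\varepsilon/4)^2-(\varepsilon/4)^2=\varepsilon^2/2$ is in fact the quantity one needs (since $|e^{-zy^2/2}|=e^{-z\Re(y^2)/2}$), and the paper's written estimate on $-|y|^2$ is at best a proxy for this; your version makes this step transparent. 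The remaining arithmetic---absorbing the cubic remainder via $z\varepsilon^3\leq 64/125$ and recovering the factor $\tfrac{2}{5}e^e$ from the segment lengths---is handled correctly.
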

The second proposition is the origin of the Bernoulli number, $B_m$, in Theorem~\ref{thm:genfun}. 
\begin{proposition}
\label{prop:Qeval}
If $\xi \in \R_{>0}$, then for all $R \geq 0$,
\begin{align*} Q^\pm(z,\xi \cdot z^{-\frac{5}{12}},0) \;=\; \exp\biggl( \mp\, \sum_{m=1}^{R-1} \frac{B_{m+1}}{m(m+1)} z^{-m} \biggr) \;+\;\bigO(z^{-R}) \text{ as } z\rightarrow \infty. \end{align*}
\end{proposition}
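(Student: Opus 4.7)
The plan is to express both $Q^\pm(z,\xi z^{-5/12},0)$ in terms of the $\Gamma$ function and invoke Stirling's classical asymptotic expansion $\Gamma(z) = z^z e^{-z}\sqrt{2\pi/z}\exp\bigl(\sum_{m=1}^{R-1}\frac{B_{m+1}}{m(m+1)}z^{-m}\bigr)(1+\bigO(z^{-R}))$.

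For $Q^-$, the substitution $t = ze^y$ turns the integrand directly into that of the Gamma function and yields the exact identity $Q^-(z,\infty,0) = e^z z^{1/2-z}\Gamma(z)/\sqrt{2\pi}$. Applying Stirling immediately gives $Q^-(z,\infty,0) = \exp\bigl(+\sum_{m=1}^{R-1}\frac{B_{m+1}}{m(m+1)}z^{-m}\bigr) + \bigO(z^{-R})$. To pass from the full-line integral back to the truncated $Q^-(z,\xi z^{-5/12},0)$, I would bound the tail $\int_{|y|>\xi z^{-5/12}} e^{-z(e^y-1-y)}\,dy$ using $e^y-1-y \geq y^2/2$ for $y\geq 0$ (all Taylor coefficients of $e^y-1-y$ are non-negative) together with the linear growth $e^y-1-y \sim |y|$ at $y\to -\infty$. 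The resulting tail is $\bigO(e^{-cz^{1/6}})$ for some positive $c$, which is absorbed into $\bigO(z^{-R})$.

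For $Q^+$, the parallel substitution $t = ze^{iy}$ maps $y \in [-\pi,\pi]$ bijectively onto the counterclockwise circle $|t|=z$, yielding $\int_{-\pi}^{\pi} e^{z(e^{iy}-1-iy)}\,dy = \tfrac{z^z e^{-z}}{i}\oint_{|t|=z} e^t t^{-z-1}\,dt$. Taking the principal branch of $t^{-z-1}$ with cut on $(-\infty,0)$, I would deform the circular contour to a Hankel contour $H$ around $0$ by adding the two segments along either side of the cut from $t=-z$ to $-\infty$. Combined with Hankel's formula $\int_H e^t t^{-z-1}\,dt = 2\pi i/\Gamma(z+1)$, this produces the identity
\[
\int_{-\pi}^{\pi} e^{z(e^{iy}-1-iy)}\,dy = \frac{2\pi z^{z-1}e^{-z}}{\Gamma(z)} + 2\sin(\pi z)\,z^z e^{-z}\int_z^\infty e^{-r}r^{-z-1}\,dr,
\]
whose second term is $\bigO(e^{-2z}/z)$ since $|\sin(\pi z)|\leq 1$ and $\int_z^\infty e^{-r}r^{-z-1}\,dr \leq z^{-z-1}e^{-z}$. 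Applying Stirling to the first term yields $Q^+(z,\pi,0) = \exp\bigl(-\sum_{m=1}^{R-1}\frac{B_{m+1}}{m(m+1)}z^{-m}\bigr) + \bigO(z^{-R})$. Finally, the discrepancy $|Q^+(z,\pi,0) - Q^+(z,\xi z^{-5/12},0)|$ is controlled via $|e^{z(e^{iy}-1-iy)}| = e^{z(\cos y-1)}$ and the elementary bound $\cos y - 1 \leq -2y^2/\pi^2$ for $|y|\leq \pi$, producing another $\bigO(e^{-cz^{1/6}})$ tail absorbed into $\bigO(z^{-R})$.

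The main technical obstacle is the Hankel-contour deformation in the $Q^+$ case: keeping track of the multivaluedness of $t^{-z-1}$ as the contour encircles $0$, correctly orienting the two cut-side segments, and extracting the $\sin(\pi z)$ correction term. Once that algebraic identity is established, both $Q^\pm$ cases follow from Stirling together with the same elementary Gaussian-tail analysis, and the $\mp$ sign in the statement is encoded in the $1/\Gamma$ vs $\Gamma$ dichotomy that arises from the two substitutions.
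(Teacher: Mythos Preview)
Your proposal is correct and takes a genuinely different route from the paper for the $Q^+$ case. Both approaches treat $Q^-$ identically: the substitution $t=ze^y$ converts the full-line integral into $\Gamma(z)$, Stirling gives the Bernoulli expansion, and a Gaussian-type tail estimate (the paper's Lemma~\ref{lmm:StirlingBound}) shows the truncation to $[-\xi z^{-5/12},\xi z^{-5/12}]$ costs only $\bigO(e^{-cz^{1/6}})$.

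For $Q^+$ the paper does \emph{not} use a Hankel contour. Instead it proves (Lemma~\ref{lmm:plusminusQ}) that $Q^+(z,\xi z^{-5/12},0)$ and $Q^-(z,\xi z^{-5/12},0)$ have formal asymptotic expansions in $z^{-1}$ related by the substitution $z\mapsto -z$, by expanding the non-Gaussian factor $A(z,y)=\exp(z(e^y-1-y-y^2/2))$ term-by-term and evaluating the resulting Gaussian moments. Then, having established the $Q^-$ case via $\Gamma$, the $Q^+$ case follows by flipping the sign of $z$ in the Bernoulli sum and using that $B_{m+1}=0$ for even $m\geq 1$.

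What each approach buys: the paper's method is elementary (no contour integration, no branch tracking) and reuses the Gaussian-moment machinery already built for Theorem~\ref{thm:integral}; the $z\mapsto -z$ symmetry is a clean one-line finish. Your Hankel argument is more direct and self-contained for $Q^+$, revealing the pleasant $\Gamma$ versus $1/\Gamma$ duality between the two integrals; the price is the bookkeeping you flag yourself---orienting the two cut-side segments and extracting the $\sin(\pi z)$ correction, which you handled correctly and which is indeed $\bigO(e^{-2z}/z)$ for real $z$.
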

Before we dive into the proofs of these last two statements, 
we use them to prove Theorem~\ref{thm:genfun}:

\begin{proof}[Proof of Theorem~\ref{thm:genfun}]
By definition of $M_k(z)$, $1/M_k(z)^m \in z^{-km} \Q[[\frac{1}{z}]]$ and, by Lemma~\ref{lmm:Ganalytic}, $G^\pm_k(z) \in z^{-\lceil \frac{k}{6} \rceil} \Q[[\frac{1}{z}]]$. Hence, 
$\Psi_k^\pm(z) \in z^{-\lceil \frac{k}{6} \rceil} \Q[[\frac{1}{z}]]$.
To prove the formula for $\chi(\GC_\pm^g)$, we write the integrals in Eq.~\eqref{eq:Jdef} in terms of the 
functions $Q^\pm$.
Fix $k \geq 1$ and assume that $z$ is sufficiently large such that $M_k(z) >0$.
For each $\varepsilon'\in \R_{>0}$,
\begin{align} \label{eq:integral1} \exp\Bigl( \pm \frac{\delta_{2|k}}{2k} \Bigr) \int_{-\varepsilon'}^{\varepsilon'} \exp({ s_k^\pm(z,y) })\, \frac{\dd y}{\sqrt{2 \pi k / z^k}} \;=\; \exp\left( { G^\pm_k(z) } \right)\, Q^\pm(M_k(z),\varepsilon',L_k(z))\,, \end{align}
which follows from the definition of $s_k^\pm$ in 
Eq.~\eqref{eq:ssmalldef}, of $Q^\pm$ in Eq.~\eqref{eq:Qdef}, of $G^\pm_k$ in Eq.~\eqref{eq:Gdef}, and by shifting the integration variable $y \rightarrow y -iL_k(z)$ in the $+$ case and $y \rightarrow y + L_k(z)$ in the $-$ case.

We now specialize to 
$\varepsilon'=\varepsilon'(z) = k z^{-\frac{5}{12}k}/6^k$
and also define $\varepsilon(z) = k^{\frac{17}{12}} {M_k}(z)^{-\frac{5}{12}}/6^k$.
Because $M_k(z) = z^k/k + \bigO(z^{k/2})$ as $z\rightarrow \infty$, we have
$\varepsilon(z) = k z^{-\frac{5}{12}k}/6^k + \bigO(z^{-\frac{11}{12}k})$.
Moreover, 
$L_k(z) \in \bigO(z^{-\frac{k}{2}})$
and hence $\varepsilon'(z) -\varepsilon(z) =\bigO(z^{-\frac{11}{12}k})$ as $z\rightarrow \infty$.
Similarly, it follows that $M_k(z) \varepsilon(z)^3 = \bigO(z^{-\frac14 k})$ and $M_k(z) \varepsilon(z)^2 = k z^{\frac{1}{6}k}/6^{2k} + \bigO(z^{-\frac13k})$.
Therefore, for all sufficiently large $z$ we find that $|L_k(z)|+|\varepsilon(z)-\varepsilon'(z)| \leq \varepsilon(z)/4 \leq 1/5$
and $z \varepsilon(z)^3 \leq \frac{64}{125}$.
Hence, Proposition~\ref{prop:boundsbound} implies
\begin{align} \label{eq:Qsimply} Q^\pm(M_k(z),\varepsilon'(z),L_k(z)) \,=\, Q^\pm(M_k(z),\varepsilon(z),0) \,+\,\bigO \biggl(z^{\frac{k}{2}}\exp\Bigl( - \frac{7 k}{32\cdot 6^{2k}} z^{\frac{1}{6}k}\Bigr) \biggr) \text{ as } z\rightarrow \infty\,. \end{align}
Applying Proposition~\ref{prop:Qeval} with $\xi =k^{\frac{17}{12}}/6^k$ while substituting $z\rightarrow M_k(z)$ gives for all $R\geq 0$,
\begin{align} \label{eq:Qsimply2} Q^\pm(M_k(z),\varepsilon(z),0) \;=\; \exp\biggl( \mp \sum_{m=1}^{R-1} \frac{B_{m+1}}{m(m+1)} M_k(z)^{-m} \biggr) \;+\;\bigO(M_k(z)^{-R}) \text{ as } z\rightarrow \infty\,. \end{align}
Combining Eqs.~\eqref{eq:integral1}, \eqref{eq:Qsimply}, and \eqref{eq:Qsimply2} with Eq.~\eqref{eq:Jdef} establishes that 
for all $R \geq 0$ and $n \geq 1$,
\begin{gather*} J_{n}^\pm(z, z^{-\frac{5}{12}}/6) \;=\; \prod_{k=1}^{n} \exp \biggl( G^\pm_k(z) \;\mp\; \sum_{m=1}^{\lfloor (R-1)/k \rfloor} \frac{B_{m+1}}{m(m+1)} M_k(z)^{-m} \biggr) \;+\;\bigO(z^{-R}) \text{ as } z\rightarrow \infty\,, \end{gather*}
where we used $\bigO(M_k(z)^{-R})=\bigO(z^{-kR})$ as $z \rightarrow \infty$.

Due to Corollary~\ref{cor:Jres}, we know that the 
asymptotic behavior of $J_n^\pm(z, z^{-\frac{5}{12}}/6)$ encodes the 
numbers $\chi^\pm_k$. By matching both asymptotic expansions, we find that for all $R > n/12 > 0$,
\begin{gather*} \sum_{k=0}^{\lfloor n/12 \rfloor} \chi^\pm_k \,z^{-k} \;=\; \prod_{k=1}^{n} \exp \biggl( G^\pm_k(z) \;\mp\; \sum_{m=1}^{\lfloor (R-1)/k \rfloor} \frac{B_{m+1}}{m(m+1)} \frac{1}{M_k(z)^{m}} \biggr) \;+\;\bigO(z^{-\frac{n+1}{12}}) \text{ as } z \rightarrow \infty. \end{gather*}
We can identify the expression in Eq.~\eqref{eq:PsiDef} in the exponent.
Due to the freedom to choose $n$ and $R$, both sides must agree as power series when $n,R \rightarrow \infty$. 
Therefore, as power series in $1/z$,
$$
\sum_{k=0}^{\infty} \chi^\pm_k \,z^{-k} 
\;=\;
\exp\biggl(\, \sum_{k=1}^\infty \Psi^\pm_k(z) \biggr)\,.
$$
Taking the $\log$ on both sides and applying Proposition~\ref{prop:disc} results in
\begin{gather*} \sum_{k=1}^\infty \Psi^\pm_k(z) \;=\; \log\biggl( \sum_{k=0}^{\infty} \chi^\pm_k \,z^{-k} \biggr) \;=\; \sum_{g \geq 2} \chi(\GC_\pm^g) \, \log \frac{1}{1-z^{1-g}} \;=\; \sum_{g \geq 2} \chi(\GC_\pm^g) \sum_{m = 1}^\infty \frac{z^{(1-g)m}}{m}\,. \end{gather*}
Hence,
\begin{gather*} \sum_{\ell=1}^\infty \frac{\mu(\ell)}{\ell} \sum_{k=1}^\infty \Psi^\pm_k(z^\ell) \;=\; \sum_{g \geq 2} \chi(\GC_\pm^g) \sum_{\ell=1}^\infty \frac{\mu(\ell)}{\ell} \sum_{m = 1}^\infty \frac{z^{(1-g)m\ell}}{m} \;=\; \sum_{g \geq 2} \chi(\GC_\pm^g) \sum_{m = 1}^\infty \frac{z^{(1-g)m}}{m} \sum_{\ell| m} {\mu(\ell)}\,. \end{gather*}
The statement follows from $\mu$'s defining property: $\mu(1) =1$ and $\sum_{d|m}\mu(d)=0$ for~$m\geq 2$.
\end{proof}

In the remainder of this section, we prove Propositions~\ref{prop:boundsbound} and ~\ref{prop:Qeval}.
To do so, we define the function $A(z,y)$ with expansions coefficients $a_{r,n}$ by
\begin{align} \label{eq:Adef} A(z,y)\;=\; \sum_{r,n} a_{r,n} \, z^r \, y^n \;=\; \exp\biggl(z\, \sum_{k\geq 3}\frac{y^k}{k!} \biggr) \;=\; \exp\left(z \left(e^y\;-\;1\;-\;y\;-\;\frac{y^2}{2}\right)\right)\,. \end{align}
It follows that $a_{r,n} \neq 0$ only if $0\leq r,n$ and $3r \leq n$. Moreover, we have $a_{r,n} \geq 0$ for all $r,n$. 
\begin{lemma}
\label{lmm:Aestimate}
If $R \geq0$, $\varepsilon \in \R_{>0}$, $z\in \R$, and $y\in \C$ with $|y| \leq \varepsilon \leq 1$ 
and $|z| \varepsilon^3 \leq 1$, then
\begin{align*} \biggl| A(z,y) \;-\; \sum_{n=0}^{R-1} \sum_{r=0}^{\lfloor n/3 \rfloor} a_{r,n} \,z^r\,y^n \biggr| \;\leq\; e^e\, \Bigl(\frac{|y|}{\varepsilon}\Bigr)^R\,. \end{align*}
\end{lemma}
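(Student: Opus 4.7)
The plan is to exploit that the lemma is a tail estimate for a Taylor series whose coefficients are nonnegative. By definition of $A(z,y)$ in Eq.~\eqref{eq:Adef}, the sum $\sum_{n=0}^{R-1} \sum_{r=0}^{\lfloor n/3 \rfloor} a_{r,n} z^r y^n$ is precisely the partial sum of $A(z,y)$ truncated at total $y$-degree $R$, because the support condition $3r \leq n$ is automatic from the shape of $A(z,y)$. Therefore, the quantity we want to bound is simply the tail
$$A(z,y) - \sum_{n=0}^{R-1} \sum_{r=0}^{\lfloor n/3 \rfloor} a_{r,n} z^r y^n = \sum_{n \geq R} \sum_{r=0}^{\lfloor n/3 \rfloor} a_{r,n} z^r y^n.$$

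Taking absolute values inside the sum and using that $a_{r,n} \geq 0$, I reduce the problem to estimating $\sum_{n \geq R} \sum_{r} a_{r,n} |z|^r |y|^n$. Since $|y| \leq \varepsilon$, for every $n \geq R$ the inequality $|y|^n = \varepsilon^n (|y|/\varepsilon)^n \leq \varepsilon^n (|y|/\varepsilon)^R$ holds. Pulling this factor out and extending the summation back to all $n, r \geq 0$ (which only increases the sum, as all terms are nonnegative) gives the bound
$$\sum_{n \geq R} \sum_{r} a_{r,n} |z|^r |y|^n \leq \left(\tfrac{|y|}{\varepsilon}\right)^R \sum_{n,r \geq 0} a_{r,n} |z|^r \varepsilon^n = \left(\tfrac{|y|}{\varepsilon}\right)^R A(|z|, \varepsilon).$$

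The remaining step is to check that $A(|z|,\varepsilon) \leq e^e$ under the hypotheses. From the closed form in Eq.~\eqref{eq:Adef}, $A(|z|,\varepsilon) = \exp\!\left(|z| \sum_{k\geq 3} \varepsilon^k/k!\right)$, and the crude estimate $\sum_{k\geq 3} \varepsilon^k/k! \leq \varepsilon^3 \sum_{k \geq 0} \varepsilon^k/k! = \varepsilon^3 e^\varepsilon$ together with $|z|\varepsilon^3 \leq 1$ and $\varepsilon \leq 1$ yields $A(|z|,\varepsilon) \leq \exp(e^\varepsilon) \leq e^e$. Combining the three inequalities completes the argument.

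No single step looks like a genuine obstacle; the only thing to keep an eye on is that the support constraint $3r \leq n$ is built into $a_{r,n}$, so the truncation really is a truncation in $n$ alone, and that the hypothesis $|z|\varepsilon^3 \leq 1$ is exactly what compensates for the cubic vanishing of $e^y - 1 - y - y^2/2$ at $y=0$ to produce the uniform $e^e$ constant.
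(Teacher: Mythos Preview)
Your proof is correct and essentially identical to the paper's own argument: both bound the tail by $(|y|/\varepsilon)^R A(|z|,\varepsilon)$ using nonnegativity of the $a_{r,n}$ and $|y|^n \leq \varepsilon^{n-R}|y|^R$ for $n\geq R$, and then estimate $A(|z|,\varepsilon)\leq \exp(|z|\varepsilon^3 e^\varepsilon)\leq e^e$ via the same crude bound $\sum_{k\geq 3}\varepsilon^k/k!\leq \varepsilon^3 e^\varepsilon$.
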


\begin{proof}
Because the coefficients $a_{r,n}$ are all non-negative and
$A(z,y)$ is entire,
we find for $|y|\leq \varepsilon$,
\begin{align*} \biggl| A(z,y) \;-\; \sum_{n=0}^{R-1} \sum_{r=0}^{\lfloor n/3 \rfloor} a_{r,n}\, z^r\, y^n \biggr|/|y|^R \;\leq\; \varepsilon^{-R} \Biggl(\, \sum_{n=R}^{\infty} \sum_{r=0}^{\lfloor n/3 \rfloor} a_{r,n}\, |z|^{r} \,\varepsilon^{n} \Biggr) \;\leq\; \varepsilon^{-R} A(|z|,\varepsilon)\,. \end{align*}
As  
$\sum_{k=3}^{\infty} x^k/k! \leq x^3 \exp(x)$ for $x\geq 0$, the statement follows from $A(|z|, \varepsilon) \leq \exp( |z| \varepsilon^3 e^\varepsilon)$.
\end{proof}

\begin{proof}[Proof of Proposition~\ref{prop:boundsbound}]
Recall the definitions of $Q^\pm$ in Eq.~\eqref{eq:Qdef}. We will only discuss the proof for the $+$ case to avoid repetition. The other case follows analogously. We may write 
\begin{align*} Q^+(z,\varepsilon',u) \;-\; Q^+(z,\varepsilon,0) \;=\; \int_{\varepsilon}^{\varepsilon'+u} A(z,iy)\, e^{-z\frac{y^2}{2}} \,\frac{\dd y}{\sqrt{2 \pi /z}}\;+\; \int_{-\varepsilon'+u}^{-\varepsilon} A(z,iy)\, e^{-z\frac{y^2}{2}}\, \frac{\dd y}{\sqrt{2 \pi /z}}\,, \end{align*}
with $A(z,y)$ from Eq.~\eqref{eq:Adef}.

If we choose a straight integration path for the integrals above, then $|y| \leq \varepsilon +|\varepsilon-\varepsilon'|+|u|$ inside the integration domains. 
So, as $\varepsilon +|\varepsilon-\varepsilon'|+|u| \leq \frac{5}{4}\varepsilon \leq 1$, and $z(\varepsilon+|\varepsilon-\varepsilon'|+|u|)^3 \leq z (\frac{5}{4} \varepsilon)^3 = \frac{125}{64} z \varepsilon^3\leq 1$, we find by applying Lemma~\ref{lmm:Aestimate} with $R=0$ and $\varepsilon$ substituted by $\varepsilon +|\varepsilon-\varepsilon'|+ |u|$ that $|A(z,iy)|\leq e^e$ under the integral signs.
Therefore, it is sufficient to provide an estimate for
$$
\biggl| 
 \int_{\varepsilon}^{\varepsilon'+u} e^{-z\frac{y^2}{2}} \frac{\dd y}{\sqrt{2 \pi /z}} 
\;+\;
\int_{-\varepsilon'+u}^{-\varepsilon}e^{-z\frac{y^2}{2}} \frac{\dd y}{\sqrt{2 \pi /z}}
\biggr|\,.
$$
For all points $y\in \C$ that lie on the line segment from $\varepsilon$ to $\varepsilon + (\varepsilon'-\varepsilon)+u$,
we have 
$$- \,|y|^2 \;\leq\; -\,\varepsilon^2 \;+\; 2 \,\varepsilon\, (|\varepsilon'\;-\;\varepsilon|\;+\;|u|) \;+\; (|\varepsilon'\;-\;\varepsilon|\;+\;|u|)^2
\;\leq\;
-\;\varepsilon^2 \;+\; \varepsilon^2/2 \;+\; \varepsilon^2/16 \;=\; -\,\frac{7}{16}\,\varepsilon^2\,
.
$$
A similar bound holds on the line segment $-\varepsilon'+u$ to $-\varepsilon$.
Hence, the integrals above are bounded by 
$|u| \exp(-\frac{7}{32}z \varepsilon^2)/\sqrt{2 \pi/z}$.
The statement follows as $|u| \leq \frac15$.
\end{proof}

\begin{lemma}
\label{lmm:compGaus}
If $k \in \Z_{\geq 0}$ and $z,\varepsilon \in \R_{>0}$ with $k \leq z\varepsilon^2$, then
\begin{align*} \int_{\varepsilon}^\infty y^{k} \,\exp\Bigl({-z \frac{y^2}{2}}\Bigr)\, \frac{\dd y}{\sqrt{2\pi/z}} \;\leq\; \frac12 \,\varepsilon^{k}\, \exp\Bigl({-z\frac{\varepsilon^2}{2}}\Bigr)\,. \end{align*}
\end{lemma}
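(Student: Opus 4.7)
The plan is to factor out the exponential $e^{-z\varepsilon^2/2}$ by shifting the integration variable and then dominate what remains by a full Gaussian integral.

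Concretely, I would substitute $y = \varepsilon + t$ with $t \in [0,\infty)$. This gives
$$\int_{\varepsilon}^\infty y^k e^{-zy^2/2}\,\dd y = e^{-z\varepsilon^2/2}\int_0^\infty (\varepsilon+t)^k\, e^{-z\varepsilon t}\, e^{-zt^2/2}\,\dd t.$$
Next I would use the elementary bound $(1+x)^k \leq e^{kx}$ for $x \geq 0$ (coming from $\log(1+x)\leq x$) to write $(\varepsilon+t)^k \leq \varepsilon^k e^{kt/\varepsilon}$. Combining the two exponentials linear in $t$ turns the integrand into $\varepsilon^k \exp\bigl(-(z\varepsilon - k/\varepsilon)t\bigr)\,e^{-zt^2/2}$.

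The hypothesis $k \leq z\varepsilon^2$ is used exactly here: it ensures $z\varepsilon - k/\varepsilon \geq 0$, so the linear factor $e^{-(z\varepsilon - k/\varepsilon)t}$ is bounded by $1$ on the domain of integration. Thus
$$\int_{\varepsilon}^\infty y^k e^{-zy^2/2}\,\dd y \leq \varepsilon^k e^{-z\varepsilon^2/2}\int_0^\infty e^{-zt^2/2}\,\dd t = \tfrac{1}{2}\sqrt{2\pi/z}\,\varepsilon^k e^{-z\varepsilon^2/2}.$$
Dividing by $\sqrt{2\pi/z}$ yields the lemma. There is no real obstacle; the one place where a miscalculation would easily occur is dropping the $k/\varepsilon$ correction in the linear term, so I would be careful to track the sign of $z\varepsilon - k/\varepsilon$ and invoke the hypothesis $k \leq z\varepsilon^2$ at that exact step.
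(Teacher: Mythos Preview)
Your proof is correct and follows essentially the same approach as the paper: shift $y \to \varepsilon + t$, bound $(\varepsilon+t)^k e^{-z\varepsilon t}$ by $\varepsilon^k$ using the hypothesis $k \le z\varepsilon^2$, and then evaluate the remaining half-Gaussian. The only cosmetic difference is that the paper bounds $(\varepsilon+t)^k e^{-z\varepsilon t}$ by locating its maximum via calculus (it peaks at $t_0 = k/(z\varepsilon) - \varepsilon \le 0$), whereas you use the inequality $(1+x)^k \le e^{kx}$; both routes yield the same pointwise bound.
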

\begin{proof}
The left-hand side is equal to 
\begin{align*} \int_{0}^\infty (y+\varepsilon)^{k}\, \exp\Bigl({-\,z \frac{y^2}{2} \;-\;z \varepsilon y \;-\;z\frac{\varepsilon^2}{2} }\Bigr) \,\frac{\dd y}{\sqrt{2\pi/z}}\,. \end{align*}
For fixed $z,\varepsilon$, and $k$, the function $\R \rightarrow \R$, $y\mapsto (y+\varepsilon)^{k} e^{-z\varepsilon y}$
has its unique maximum at $y_0=\frac{k}{z \varepsilon} - \varepsilon$, and it is decreasing for $y \geq y_0$.
As $k \leq z\varepsilon^2$, $y_0\leq 0$ and $(y+\varepsilon)^{k} e^{-z\varepsilon y} \leq \varepsilon^{k}$ for all $y \geq 0$.
Use also that for all $a\in \R_{>0}$, $\int_0^\infty e^{-a y^2} \dd y = \frac12 \int_{-\infty}^\infty e^{-a y^2} \dd y = \frac12\sqrt{\pi/a}$.
\end{proof}
\begin{lemma}
\label{lmm:plusminusQ}
If $\xi \in \R_{>0}$, then for all $R \in \Z_{\geq 0}$,
\begin{align*} Q^\pm(z,\xi \cdot z^{-\frac{5}{12}},0) &\;=\; \sum_{s=0}^{6R-1} \sum_{r=0}^{\lfloor 2s/3 \rfloor} (-1)^s\, (2s-1)!!\, (\pm z)^{r-s}\, a_{r,2s} \;+\;\bigO(z^{-R}) \text{ as } z \rightarrow \infty\,, \end{align*}
where $(2s-1)!! = (2s-1) \cdot (2s-3) \cdots 3 \cdot 1$ and $a_{r,n}$ are the coefficients defined in Eq.~\eqref{eq:Adef}.
\end{lemma}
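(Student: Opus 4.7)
The plan is to realise both $Q^\pm(z,\xi z^{-5/12},0)$ as Gaussian integrals against the function $A$ from Eq.~\eqref{eq:Adef}, truncate $A$ via Lemma~\ref{lmm:Aestimate}, and then evaluate the resulting polynomial moments term by term. Using the identities $z(e^{iy}-1-iy) = -\tfrac12 zy^2 + z\sum_{k\geq 3}(iy)^k/k!$ and $-z(e^{y}-1-y) = -\tfrac12 zy^2 - z\sum_{k\geq 3}y^k/k!$ in the definition~\eqref{eq:Qdef} gives
\begin{align*}
Q^+(z,\varepsilon,0) &= \int_{-\varepsilon}^{\varepsilon} A(z,iy)\, e^{-zy^2/2}\,\frac{\dd y}{\sqrt{2\pi/z}},
&
Q^-(z,\varepsilon,0) &= \int_{-\varepsilon}^{\varepsilon} A(-z,y)\, e^{-zy^2/2}\,\frac{\dd y}{\sqrt{2\pi/z}},
\end{align*}
so the two cases become parallel up to the sign of $z$ in the first argument of $A$ and the extra factor $i^n$ coming from $(iy)^n$.

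With $\varepsilon = \xi z^{-5/12}$, the hypotheses $|y|\leq \varepsilon \leq 1$ and $|z|\varepsilon^3 = \xi^3 z^{-1/4} \leq 1$ of Lemma~\ref{lmm:Aestimate} hold for all sufficiently large $z$. I would apply that lemma with truncation parameter $N = 12R$ to replace $A(\pm z,\pm y)$ under each integral by $\sum_{n=0}^{12R-1}\sum_{r=0}^{\lfloor n/3\rfloor} a_{r,n}(\pm z)^r(\pm y)^n$, incurring an error bounded by $e^e (|y|/\varepsilon)^{12R}$. The decisive bookkeeping observation is that after the replacement the symmetry of the Gaussian density on $[-\varepsilon,\varepsilon]$ kills all odd-$n$ terms, so only $n=2s$ with $s\in\{0,1,\dots,6R-1\}$ survives; the factor $i^{2s}=(-1)^s$ for $Q^+$, respectively the $(-z)^r = (-1)^r z^r$ for $Q^-$, then combine with $(-1)^s$ to produce the $(\pm z)^{r-s}$ in the claim.

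For each surviving $s$, the Gaussian moment $\int_{-\varepsilon}^{\varepsilon}y^{2s} e^{-zy^2/2}\,\dd y/\sqrt{2\pi/z}$ equals the full-line value $(2s-1)!!\,z^{-s}$ minus twice the tail from $\varepsilon$ to $\infty$, and for fixed $s$ we have $2s\leq z\varepsilon^2 = \xi^2 z^{1/6}$ for large $z$, so Lemma~\ref{lmm:compGaus} bounds this tail by $\tfrac12\varepsilon^{2s} e^{-\xi^2 z^{1/6}/2}$, which is super-polynomially small in $z$. The only subtle point is to confirm that the truncation error from Lemma~\ref{lmm:Aestimate}, once integrated, is indeed $\bigO(z^{-R})$: bounding $(|y|/\varepsilon)^{12R}$ by extending to $\R$ and using the standard Gaussian moment gives
\begin{align*}
e^e\, \varepsilon^{-12R}\!\int_{\R} y^{12R} e^{-zy^2/2}\,\frac{\dd y}{\sqrt{2\pi/z}}
= e^e (12R-1)!!\, \varepsilon^{-12R} z^{-6R}
= e^e(12R-1)!!\,\xi^{-12R}\, z^{-R},
\end{align*}
so the choice $N=12R$ exactly balances the $z^{-5/12}$ scaling of $\varepsilon$ against the $z^{-1/2}$ of the Gaussian. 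Collecting the surviving terms with their signs yields the stated expansion; this balancing of the two error sources against the odd power $z^{-5/12}$ is the main, and essentially the only, obstacle.
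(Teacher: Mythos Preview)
Your proof is correct and follows essentially the same route as the paper's own argument: rewrite $Q^\pm$ as a Gaussian integral against $A(\pm z,\cdot)$, truncate $A$ at order $12R$ via Lemma~\ref{lmm:Aestimate}, extend the Gaussian moments to the full line using Lemma~\ref{lmm:compGaus}, and check that both error terms are $\bigO(z^{-R})$. The only place you are a touch terse is the tail-extension error, where each term still carries a factor $a_{r,2s}(\pm z)^r$; since $r\leq 2s/3\leq 4R$ is bounded, these polynomial factors are harmless against the $e^{-\xi^2 z^{1/6}/2}$ decay (equivalently, the paper sums them back into $A(z,\varepsilon)\leq e^e$), but it is worth saying explicitly.
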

\begin{proof}
We will only prove the $+$ case in detail. The other case follows analogously after trivial adjustments of signs and factors of $i$.
We abbreviate $\varepsilon(z) = \xi \cdot z^{-\frac{5}{12}}$ and assume that $z$ is large enough such that $\varepsilon(z) \leq 1$, $z \varepsilon(z)^3 = \xi^3 z^{-\frac14} \leq 1$ and $z \varepsilon(z)^2 = \xi^2 z^{\frac{1}{6}} \geq 12R$.
First, we use the definition of $A(z,y)$ in Eq.~\eqref{eq:Adef} to rewrite the 
integral expression for $Q^+(z,\varepsilon(z),0)$ from Eq.~\eqref{eq:Qdef} and immediately apply
  the substitutions suggested by Lemmas~\ref{lmm:Aestimate} and \ref{lmm:compGaus}:
\begin{align*} Q^+(z,\varepsilon(z),0) &\;=\; \int_{-\varepsilon(z)}^{\varepsilon(z)} A(z,iy) \,\frac{e^{-z\frac{y^2}{2}} \, \dd y}{\sqrt{2 \pi/z}} \;=\; \sum_{n=0}^{12R-1} \sum_{r=0}^{\lfloor n/3 \rfloor} a_{r,n} \,z^r \, \int_{-\varepsilon(z)}^{\varepsilon(z)} (iy)^n \frac{e^{-z\frac{y^2}{2}}\, \dd y}{\sqrt{2 \pi/z}} \;+\; \mathcal{R}^{(1)}(z) \\
&\;=\; \sum_{n=0}^{12R-1} \sum_{r=0}^{\lfloor n/3 \rfloor} a_{r,n}\, z^r \, \int_{-\infty}^{\infty} (iy)^n \, \frac{e^{-z\frac{y^2}{2}} \,\dd y}{\sqrt{2 \pi/z}} \;+\; \mathcal{R}^{(1)}(z) \;+\; \mathcal{R}^{(2)}(z)\,. \end{align*}
The Gaussian integrals
$\int_{-\infty}^\infty (iy)^{2s} e^{-zy^2/2} \frac{\dd y}{\sqrt{2\pi/z}} = (2s-1)!! (-z)^{-s}$,
$\int_{-\infty}^\infty (iy)^{2s+1} e^{-zy^2/2} \frac{\dd y}{\sqrt{2\pi/z}} = 0$,
imply the statement if we ensure that $\mathcal{R}^{(1)}(z), \mathcal{R}^{(2)}(z) \in \bigO(z^{-R})$, 
which remains to be proven.
From Lemma~\ref{lmm:Aestimate}, we get
\begin{gather*} |\mathcal{R}^{(1)}(z)| \;\leq\; e^e\, \varepsilon(z)^{-12R}\, \int_{-\varepsilon(z)}^{\varepsilon(z)} y^{12R}\, e^{-z\frac{y^2}{2}} \frac{\dd y}{\sqrt{2 \pi/z}} \;\leq\; e^e\, \varepsilon(z)^{-12R}\, \int_{-\infty}^{\infty} y^{12R}\, e^{-z\frac{y^2}{2}} \,\frac{\dd y}{\sqrt{2 \pi/z}} \\
=\; e^e\, (12R-1)!!\, \varepsilon(z)^{-12R} \, z^{-12R} \;=\; e^e\, (12R-1)!! \, \xi^{12R}\, z^{-R}\,. \end{gather*}
By Lemma~\ref{lmm:compGaus},
\begin{gather*} \biggl| \int_{-\infty}^{\infty} (iy)^n \, e^{-z\frac{y^2}{2}}\, \frac{\dd y}{\sqrt{2 \pi/z}} \;-\; \int_{-\varepsilon(z)}^{\varepsilon(z)} (iy)^n e^{-z\frac{y^2}{2}} \,\frac{\dd y}{\sqrt{2 \pi/z}} \biggr| \;\leq\; \varepsilon(z)^n\, \exp\Bigl(-\frac{z\varepsilon(z)^2}{2}\Bigr)\,. \end{gather*}
So, also due to Lemma~\ref{lmm:Aestimate} applied with $R=0$, the second remainder term is bounded by
\begin{gather*} |\mathcal{R}^{(2)}(z)| \;\leq\; e^{-\frac{z\varepsilon(z)^2}{2}} \sum_{n=0}^{12R-1} \sum_{r=0}^{\lfloor n/3 \rfloor} a_{r,n}\, z^r \, \varepsilon(z)^n \;\leq\; e^{-\frac{z\varepsilon(z)^2}{2}}\, A(z,\varepsilon(z)) \;\leq\; e^e\, e^{-\frac12 \xi^2 z^{\frac16}}\,. \qedhere \end{gather*}
\end{proof}

\begin{lemma}
\label{lmm:stirling}
For all $R \geq 0$, the asymptotic expansion involving the Bernoulli numbers holds,
$$
\int_{-\infty}^{\infty} e^{-z(e^y-1-y)}\, \frac{\dd y}{\sqrt{2 \pi/z}}
\;=\;
\exp\biggl(\, \sum_{m=1}^{R-1} \frac{B_{m+1}}{m(m+1)} \,z^{-m} \biggr)
\;+\;\bigO(z^{-R}) \text{ as } z\rightarrow \infty\,.
$$
\end{lemma}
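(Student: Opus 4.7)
The plan is to recognize the integral as an exact Gamma-function expression via a change of variables, and then read off its asymptotic behavior from the classical Stirling expansion. This turns what at first glance looks like a saddle-point computation into a one-line exact identity followed by a quotation from a classical result.

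First, I would apply the substitution $t = z e^y$, i.e.,~$y = \log(t/z)$ with $\dd y = \dd t / t$. A direct computation gives
\[
-z(e^y - 1 - y) = -t + z \log t - z \log z + z,
\]
so the left-hand side of the claim becomes
\[
\sqrt{\tfrac{z}{2\pi}} \cdot e^{z} z^{-z} \int_0^\infty t^{z-1} e^{-t} \dd t = \sqrt{\tfrac{z}{2\pi}} \cdot e^{z} z^{-z} \Gamma(z).
\]

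Second, I would invoke Stirling's asymptotic expansion in Bernoulli-coefficient form,
\[
\log \Gamma(z) = \bigl(z - \tfrac{1}{2}\bigr) \log z - z + \tfrac{1}{2} \log(2\pi) + \sum_{m=1}^{R-1} \frac{B_{m+1}}{m(m+1)} z^{-m} + \bigO(z^{-R})
\]
as $z \to \infty$, valid for every $R \geq 0$. (Most terms with even $m \geq 2$ vanish, since $B_{n}=0$ for odd $n \geq 3$, so only the genuine Bernoulli contributions survive.) Substituting into the expression above, the factor $\sqrt{z/(2\pi)} \cdot e^{z} z^{-z}$ exactly cancels the elementary prefactors $\sqrt{2\pi}\, z^{z-1/2} e^{-z}$ from Stirling, leaving
\[
\exp\left( \sum_{m=1}^{R-1} \frac{B_{m+1}}{m(m+1)} z^{-m} + \bigO(z^{-R}) \right),
\]
which equals $\exp\bigl( \sum_{m=1}^{R-1} \frac{B_{m+1}}{m(m+1)} z^{-m} \bigr) + \bigO(z^{-R})$ because the leading exponential factor is bounded as $z \to \infty$ and $\exp(\bigO(z^{-R})) = 1 + \bigO(z^{-R})$.

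I do not expect any serious obstacle: the change of variables is a diffeomorphism between $\R$ and $(0,\infty)$, the integrand is positive and integrable for $z > 0$, and the stated form of Stirling's expansion is a classical result. The only piece of bookkeeping is converting the additive $\bigO(z^{-R})$ inside the exponent into an additive $\bigO(z^{-R})$ outside, which is immediate from the boundedness of the exponential of the partial Bernoulli sum as $z \to \infty$.
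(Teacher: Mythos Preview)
Your proposal is correct and follows essentially the same approach as the paper: the substitution $t = z e^y$ converts the integral exactly into $\Gamma(z)/(\sqrt{2\pi}\, z^{z-1/2} e^{-z})$, after which the classical Stirling expansion in Bernoulli-coefficient form gives the result. The paper's proof is virtually identical, including the same substitution and the same quotation of Stirling's series.
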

\begin{proof}
The $\Gamma$ function is defined by $\Gamma(z) = \int_0^\infty t^z e^{-t} \frac{\dd t}{t}$ for all $z > 0$.
The statement follows from the \emph{Stirling expansion} of this function. Using the variable substitution $t = ze^y$, we find
$$
\int_{-\infty}^{\infty} e^{-z(e^y-1-y)}\, \frac{\dd y}{\sqrt{2 \pi/z}}
\;=\;
\frac{
e^z \,z^{-z+\frac12}}{\sqrt{2 \pi}}\,
\int_{0}^{\infty} t^z \,e^{-t} \,\frac{\dd t}{t}
\;=\;
\frac{\Gamma(z)}{\sqrt{2\pi}\, z^{z-\frac12} \,e^{-z}}\,.
$$
The Stirling expansion (see, e.g.,~\cite{AIK14}) states that for all $R \geq 0$,
\begin{gather*} \Gamma(z) \;=\; \exp\biggl( \bigl(z-\frac12\bigr) \log z \;-\;z \;+\; \frac12\, \log(2\pi) \;+\; \sum_{m=1}^{R-1} \frac{B_{m+1}}{m(m+1)}\,z^{-m} \biggr) \;+\; \bigO(z^{-R}) \text{ as } z \rightarrow \infty\,. \qedhere \end{gather*}
\end{proof}

Only a special case of the following lemma is required for the proof of Proposition~\ref{prop:Qeval}, but we will need the general case later in Section~\ref{sec:asymp}.

\begin{lemma}
\label{lmm:StirlingBound}
Let $g(z,y,\alpha) = -z(e^y-1-y) + \alpha \sqrt{z} (e^{\frac12y} -1)$.
If $\varepsilon, z \in \R_{>0}$ and $\alpha \in \C$ such that $\varepsilon <2$, and $z \varepsilon^2\geq 16 |\alpha|^2$, then
$$
\left|
\int_{-\infty}^{\infty} e^{g(z,y,\alpha)} \,\dd y
\;-\;
\int_{-\varepsilon}^{\varepsilon} e^{g(z,y,\alpha)} \,\dd y
\right|
\;\leq\;
\frac{1}{z}\,
\exp\left({-\frac{z}{2}}\right)
\;+\;2\,
\sqrt{
\frac{2\pi}{z}
}\,
\exp\left({-z\frac{\varepsilon^2}{8}}\right)\,.
$$
\end{lemma}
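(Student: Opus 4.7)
The plan is to split the integration region $\{|y|\geq \varepsilon\}$ into three ranges and bound each separately: the right tail $y\geq \varepsilon$, the near left tail $-2\leq y\leq -\varepsilon$, and the far left tail $y\leq -2$. The first two will each contribute $\sqrt{2\pi/z}\,e^{-z\varepsilon^2/8}$, summing to the claimed $2\sqrt{2\pi/z}\,e^{-z\varepsilon^2/8}$ term, while the far left tail will account for the $(1/z)e^{-z/2}$ term. Throughout, I use $|e^{g}|=e^{\Re g}\leq \exp(-z(e^y-1-y)+|\alpha|\sqrt{z}\,|e^{y/2}-1|)$, noting that $e^y-1-y\geq 0$.

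For the right tail I would substitute $u=e^{y/2}-1\geq 0$ and use the identity $e^y-1-y=u^2+2(u-\log(1+u))$, which gives $e^y-1-y\geq u^2$. Thus $|e^g|\leq \exp(-zu^2+|\alpha|\sqrt{z}\,u)$. Since $u\geq e^{\varepsilon/2}-1\geq \varepsilon/2$ and the hypothesis $16|\alpha|^2\leq z\varepsilon^2$ yields $|\alpha|/\sqrt{z}\leq \varepsilon/4\leq u/2$, the cross-term is absorbed: $|\alpha|\sqrt{z}\,u\leq zu^2/2$, so $|e^g|\leq e^{-zu^2/2}$. Combined with $dy=2\,du/(1+u)\leq 2\,du$ and the standard tail estimate $\int_a^\infty e^{-zu^2/2}\,du\leq \sqrt{\pi/(2z)}\,e^{-za^2/2}$ (obtained from the substitution $u=a+s$), this yields $\int_\varepsilon^\infty |e^g|\,dy\leq \sqrt{2\pi/z}\,e^{-z\varepsilon^2/8}$.

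For the near left tail the bound $u^2$ is not tight enough, so I would use the sharper inequality $e^y-1-y\geq y^2/4$ on $[-2,0]$ (a calculus fact: $f(y)=e^y-1-y-y^2/4$ vanishes at $0$, has $f(-2)=e^{-2}>0$, and a straightforward critical-point analysis shows $f\geq 0$ on this interval), together with $|e^{y/2}-1|=1-e^{y/2}\leq |y|/2$ (from $e^{y/2}\geq 1+y/2$). Completing the square gives $-zy^2/4+|\alpha|\sqrt{z}\,|y|/2=-(z/4)(|y|-|\alpha|/\sqrt{z})^2+|\alpha|^2/4$. The hypothesis yields $|\alpha|/\sqrt{z}\leq |y|/4$ for $|y|\geq \varepsilon$, so $(|y|-|\alpha|/\sqrt{z})^2\geq 9y^2/16$; combined with $|\alpha|^2/4\leq z\varepsilon^2/64\leq zy^2/64$, this gives $|e^g|\leq e^{-zy^2/8}$. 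The same Gaussian tail bound then produces $\int_{-2}^{-\varepsilon}|e^g|\,dy\leq \sqrt{2\pi/z}\,e^{-z\varepsilon^2/8}$.

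For the far left tail use the crude bounds $e^y-1-y\geq |y|-1$ (from $e^y\geq 0$) and $|e^{y/2}-1|\leq 1$ (since $y\leq 0$) to obtain $|e^g|\leq e^{|\alpha|\sqrt{z}+z-z|y|}$. Direct integration gives $\int_{-\infty}^{-2}|e^g|\,dy\leq e^{|\alpha|\sqrt{z}-z}/z$, and since $|\alpha|\sqrt{z}\leq z\varepsilon/4<z/2$ (using $\varepsilon<2$), this is at most $e^{-z/2}/z$. Summing the three bounds yields the statement. The main technical obstacle is the near left range: the symmetric bound $(e^{y/2}-1)^2\leq e^y-1-y$ used for the right tail only supplies $|e^{y/2}-1|\geq 1-e^{-\varepsilon/2}\geq \varepsilon/4$ on $\{y\leq -\varepsilon\}$, which is a factor of two too weak to absorb the $\alpha$-term via the completion-of-squares step; the quadratic $y^2/4$ bypasses this, but its validity is restricted to a bounded interval, forcing the three-way split.
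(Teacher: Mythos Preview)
Your proposal is correct and follows essentially the same three-way split $(-\infty,-2]\cup[-2,-\varepsilon]\cup[\varepsilon,\infty)$ as the paper, reaching the identical bounds $\frac{1}{z}e^{-z/2}$ and $\sqrt{2\pi/z}\,e^{-z\varepsilon^2/8}$ on each piece. The only cosmetic difference is on the right tail: you substitute $u=e^{y/2}-1$ and use $e^y-1-y\geq u^2$, whereas the paper stays in the $y$-variable via $e^y-1-y\geq e^{y/2}-1-y/2$ together with $e^y-1-y\geq y^2/2$ to reach $\Re g\leq -zy^2/4+|\alpha|\sqrt{z}\,y/2$; both routes reduce to the same Gaussian tail estimate.
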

\begin{proof}
Observe  that $z \varepsilon^2\geq 16 |\alpha|^2$ and $\varepsilon <2$ imply
$\varepsilon \geq 4 \frac{|\alpha|}{\sqrt{z}}$ and 
$\sqrt{z} \geq 2 |\alpha|$.
In the range, $y \in (-\infty,-2]$, we have $-(e^y-1-y) \leq 1+y$
and $e^{\frac{y}{2}}-1\leq 1$. So, there $\Re (g(z,y,\alpha)) \leq z(1+y) + |\alpha| \sqrt{z}$.
In the range, $y \in [-2,-\varepsilon]$, it follows from $e^{y/2} \geq 1+y/2$ that $e^y \geq (1+y/2)^2\Rightarrow -(e^y-1-y) \leq -y^2/4$ 
    and $0 < 1- e^{y/2} \leq -\frac{y}{2}$.
Hence, in that range $\Re (g(z,y,\alpha)) \leq -y^2/4 + |\alpha| \sqrt{z} \frac{|y|}{2}$.
 For $y \in[\varepsilon,\infty)$, we have 
$e^y-1-y \geq e^{\frac12 y} -1 -\frac{y}{2}$, $e^y-1-y = \sum_{k=2}^\infty y^k/k! \geq y^2/2$ and because $\sqrt{z} \geq 2 |\alpha|$ 
we further find that $\Re (g(z,y,\alpha)) \leq -\frac{z}{4}y^2 + |\alpha| \sqrt{z} \frac{y}{2}$.
Using $\varepsilon \geq \frac{4 |\alpha|}{\sqrt{z}}$, we get
$ -z \frac{y^2}{4} +|\alpha|\sqrt{z} \frac{|y|}{2} \leq -z \frac{y^2}{8} $
for all $|y| \geq \varepsilon$.
Therefore, it follows from Lemma~\ref{lmm:compGaus} that
\begin{align*} \left|\int_{-\infty}^2 e^{g(z,y,\alpha)}\, \dd y \right| &\;\leq\; \int_{-\infty}^2 e^{z(1+y)+|\alpha|\sqrt{z}} \,\dd y \;=\; \frac{1}{z}\, e^{-z+|\alpha|\sqrt{z}} \;\leq\; \frac{1}{z}\, e^{-\frac{z}{2}} \\
\left|\int_{-2}^\varepsilon e^{g(z,y,\alpha)}\, \dd y \right| &\;\leq\; \int_{-2}^\varepsilon e^{-z\frac{y^2}{8}}\, \dd y \;\leq\; \frac12\, \sqrt{ \frac{8\pi}{z} }\, e^{-z\frac{\varepsilon^2}{8}} \\
\left|\int_{\varepsilon}^\infty e^{g(z,y,\alpha)}\, \dd y \right| &\;\leq\; \int_{\varepsilon}^\infty e^{-z\frac{y^2}{8}}\, \dd y \;\leq\; \frac12\, \sqrt{ \frac{8\pi}{z} }\, e^{-z\frac{\varepsilon^2}{8}}\,. \qedhere \end{align*}
\end{proof}

\begin{proof}[Proof of Proposition~\ref{prop:Qeval}]
For any fixed, $\varepsilon,z>0$ with $\varepsilon <2$, it follows from Eq.~\eqref{eq:Qdef} that 
\begin{align*} Q^-(z,\varepsilon,0)\;=\; \int_{-\infty}^{\infty} e^{-z(e^y-1-y)}\, \frac{\dd y}{\sqrt{2 \pi/z}} \;+\;\mathcal R(z,\varepsilon)\,, \end{align*}
where the remainder term is bounded as follows due to Lemma~\ref{lmm:StirlingBound} applied with $\alpha=0$:
$ | \mathcal R(z,\varepsilon)| \leq \frac{1}{z} e^{-\frac{z}{2}} +2 \sqrt{ \frac{2\pi}{z} } e^{-z\frac{\varepsilon^2}{8}} $.
Hence, for all $R\geq0$, $\mathcal R(z,\xi \cdot z^{-\frac{5}{12}}) = \bigO(z^{-R})$ as $z\rightarrow \infty$
and by Lemma~\ref{lmm:stirling},
$$
Q^-(z,\xi \cdot z^{-\frac{5}{12}},0)
\;=\;
\exp\biggl(\, \sum_{m=1}^{R-1} \frac{B_{m+1}}{m(m+1)} z^{-m} \biggr)
\;+\;\bigO(z^{-R}) \text{ as } z\rightarrow \infty\,.
$$
The statement follows as by Lemma~\ref{lmm:plusminusQ} the asymptotic $z\rightarrow \infty$ expansions of $Q^-(z,\xi \cdot z^{-\frac{5}{12}},0)$ and $Q^+(z,\xi \cdot z^{-\frac{5}{12}},0)$ only differ by a sign flip in $z$ and because $B_{m+1}=0$ for even $m \geq 1$.
\end{proof}

\section{Asymptotic analysis of the generating functions}
\label{sec:asymp}

This section will prove the asymptotic statements in Theorems~\ref{thm:eulerGCeven}, \ref{thm:eulerGCodd}, and \ref{thm:eulerAGC}. The proofs will be based on the generating function from Theorem~\ref{thm:genfun}. Asymptotically, only the term involving the Bernoulli numbers in Eq.~\eqref{eq:PsiDef} will contribute, and within that term, only summands with $\ell=1,k\in\{1,2\}$ and $m\rightarrow \infty$ turn out to be significant.

The super-exponential growth rate of the magnitude of the Euler characteristics is due to the rapid growth of the magnitude of the Bernoulli numbers. We can read off this growth rate from Euler's classical formula, which relates the Bernoulli numbers to the even values of the Riemann $\zeta$-function, $\zeta(n) =\sum_{k = 1}^\infty \frac{1}{k^n}$:
\begin{align} \label{eq:euler} B_{n} &\;=\; (-1)^{\frac{n}{2}+1} \,2\, \frac{n!}{(2\pi)^{n}}\, \zeta(n) \quad \text{ for all even } n \geq 2\,. \end{align}
For odd $n >1$, the numbers $B_n$ vanish.

We will fix a notation for the coefficients of the part of Eq.~\eqref{eq:PsiDef} that involves the Bernoulli numbers. Let $\xi_{n,k}^\pm \in \Q$ denote the coefficients defined via the following power series in $\Q[[\frac{1}{z}]]$,
\begin{align} \label{eq:xidef} \sum_{n=1}^\infty \xi_{n,k}^\pm\, z^{-n} \;=\; \mp\, \sum_{m=1}^\infty\frac{B_{m+1}}{m(m+1)}\, \frac{1}{M_k(z)^m} \text{ for all } k \geq 1\,. \end{align}
Because $M_k(z)$ has $\frac{1}{k} z^k$ as its leading monomial, $\xi_{n,k}^\pm\neq 0$ only if  $n \geq k$. 

We can estimate the coefficients $\xi_{n,k}^\pm$ uniformly over $n$ and $k$:
\begin{lemma}
\label{lmm:xibound}
There is a constant $C$ such that 
for all $n\geq k \geq 1$,
$ | \xi_{n,k}^\pm | \leq C 4^n ( \lfloor n/k\rfloor -1 )! $.
\end{lemma}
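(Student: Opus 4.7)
The plan is to expand $1/M_k(z)^m$ as a power series in $1/z$ and extract the coefficient of $z^{-n}$ via a Cauchy estimate, then to estimate the Bernoulli factor using Euler's formula~\eqref{eq:euler}.

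First I would factor $M_k(z) = \frac{z^k}{k}\bigl(1 + \tilde W_k(1/z)\bigr)$, where $\tilde W_k(u) = \sum_{d\mid k,\, d>1}\mu(d)\, u^{k(1-1/d)}$ is a polynomial in $u$. Then
\begin{align*}
M_k(z)^{-m} = k^m z^{-km}\bigl(1+\tilde W_k(1/z)\bigr)^{-m},
\end{align*}
so $\xi_{n,k}^\pm$ is a finite sum over $1 \leq m \leq \lfloor n/k\rfloor$ of $\mp\frac{B_{m+1}}{m(m+1)}$ times $k^m$ times $[u^{n-km}](1+\tilde W_k(u))^{-m}$.

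For the coefficient of $u^{n-km}$, Lemma~\ref{eq:Mkpos2} rewritten says $|1+\tilde W_k(u)| \geq \frac12$ for $|u|\leq \frac14$, so $(1+\tilde W_k(u))^{-m}$ is holomorphic on $|u|\leq \frac14$ and bounded there by $2^m$. Cauchy's estimate on the contour $|u|=\frac14$ then gives
\begin{align*}
\bigl|[u^{n-km}](1+\tilde W_k(u))^{-m}\bigr| \leq 2^m\cdot 4^{n-km}.
\end{align*}
Combining, $\bigl|[z^{-n}] M_k(z)^{-m}\bigr| \leq 4^n\cdot (2k/4^k)^m$, and since $2k\leq 4^k$ for all $k\geq 1$, this factor is at most $1$.

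For the Bernoulli bound, I would use Euler's formula~\eqref{eq:euler} together with $\zeta(2j)\leq \zeta(2)$: recalling that $B_{m+1}=0$ for even $m\geq 2$, the only surviving terms have $m+1$ even, in which case
\begin{align*}
\frac{|B_{m+1}|}{m(m+1)} \leq \frac{2\zeta(2)\,(m+1)!}{(2\pi)^{m+1}\,m(m+1)} = \frac{2\zeta(2)\,(m-1)!}{(2\pi)^{m+1}}.
\end{align*}
Putting this together:
\begin{align*}
|\xi_{n,k}^\pm| \leq 4^n \sum_{m=1}^{\lfloor n/k\rfloor} \frac{|B_{m+1}|}{m(m+1)} \leq C'\cdot 4^n \sum_{m=1}^{\lfloor n/k\rfloor} \frac{(m-1)!}{(2\pi)^{m+1}}.
\end{align*}
Finally, since $(m-1)!\leq (\lfloor n/k\rfloor-1)!$ for $m\leq \lfloor n/k\rfloor$ and $\sum_{m\geq 1}(2\pi)^{-m-1}$ is a convergent geometric series, the remaining sum is bounded by a constant multiple of $(\lfloor n/k\rfloor-1)!$, yielding the claim. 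The only real choice in the argument is the radius of the Cauchy contour; I expect no substantive obstacle, just bookkeeping of constants to ensure the final $C$ is uniform in $k$ (which works because $2k/4^k \leq 1$).
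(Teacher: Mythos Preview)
Your proof is correct and follows essentially the same approach as the paper: factor out the leading monomial of $M_k$, apply Lemma~\ref{eq:Mkpos2} to bound $(1+\tilde W_k(u))^{-m}$ by $2^m$ on the disc $|u|\le\frac14$, use Cauchy's estimate for the coefficients, bound the Bernoulli factor via Euler's formula~\eqref{eq:euler}, and finish by pulling out $(\lfloor n/k\rfloor-1)!$ and summing a geometric series. The only cosmetic difference is that the paper keeps the factor $(k/(4^k\pi))^m$ in the geometric series whereas you first discard $(2k/4^k)^m\le 1$ and use $(2\pi)^{-m-1}$ alone; both lead to a uniform constant.
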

\begin{proof}
We define the expansion coefficients $a_n^{(k,m)}$ such that $A^{(k,m)}(u)= \sum_{n=0}^\infty a_n^{(k,m)} u^n = (k u^{k} M_k(1/u))^{-m}$.
It follows from Lemma~\ref{eq:Mkpos2} that the functions $A^{(k,m)}:\C\rightarrow \C$ are holomorphic in the disc $|u| \leq \frac14$, where they are bounded by 
$|A^{(k,m)}(u)| \leq 2^{m}$. Hence, by Cauchy's inequality, 
$|a_n^{(k,m)}| \leq 4^n \sup_{|u| = \frac14} | A^{(k,m)}(u) | \leq 2^{2n+m}$.
Writing $\xi_{n,k}^\pm$ in terms of the $a_n^{(k,m)}$ gives
$$
\xi_{n,k}^\pm \;=\; 
\mp\,
\sum_{m=1}^{\lfloor \frac{n}{k} \rfloor}\frac{B_{m+1}}{m(m+1)}\, k^m\, a^{(k,m)}_{n-km}\,.
$$
As $\zeta(n)$ is decreasing with $n$, it follows immediately from Eq.~\eqref{eq:euler} that
$\left|\frac{B_{n+1}}{n(n+1)}\right| \leq \frac{2\zeta(2)}{(2\pi)^{n+1}} (n-1)!$  for all $n \geq 1$.
After using this fact and the bound on $a_n^{(k,m)}$, we get for all $n \geq k \geq 1$,
$$
|
\xi_{n,k}^\pm
|
\;\leq \;
4^n\,
\frac{\zeta(2)}{\pi}\,
\sum_{m=1}^{\lfloor \frac{n}{k} \rfloor}
\left(
\frac{k}{2^{2k} \pi }
\right)^m\,
(m-1)!
\;\leq\;
4^n\,
\left(\left\lfloor \frac{n}{k} \right\rfloor-1\right)!\,
\frac{\zeta(2)}{\pi}\,
\frac{1}{\pi-1}\,,
$$
where we used $k \leq 2^{2k}$ for all $k \geq 1$ and $\sum_{m=1}^\infty {1/\pi^m} = {1/(\pi-1)}$.
\end{proof}

Thanks to this estimate and our knowledge of the coefficients of the non-Bernoulli terms of Eq.~\eqref{eq:PsiDef} from Lemma~\ref{lmm:Ganalytic}, we can prove that the dominant asymptotic behavior of the numbers $\chi(\GC_\pm^{g})$ comes from only three terms in the double sum over $\ell$ and $k$ in Theorem~\ref{thm:genfun}:
\begin{proposition}
\label{prop:GC_np1}
$\chi(\GC_\pm^{n+1}) = \xi^\pm_{n,1} + \xi^\pm_{n,2} - \frac12 \delta_{2|n} \, \xi^\pm_{\frac{n}{2},1} +\bigO(4^n (\lfloor n/3\rfloor+1) !) $.
\end{proposition}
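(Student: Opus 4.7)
The plan is to extract the coefficient of $z^{-n}$ from the identity of Theorem~\ref{thm:genfun}. Since $\Psi_k^\pm(z^\ell) \in \Q[[z^{-\ell}]]$, only divisors $\ell | n$ contribute, yielding
\[
\chi(\GC_\pm^{n+1}) \;=\; \sum_{\ell | n} \frac{\mu(\ell)}{\ell} \sum_{k \geq 1} [z^{-n/\ell}]\Psi_k^\pm(z).
\]
Split $\Psi_k^\pm = G_k^\pm + \Xi_k^\pm$, where $\Xi_k^\pm(z) = \sum_{n\geq 1} \xi_{n,k}^\pm z^{-n}$ is the Bernoulli part from Eq.~\eqref{eq:xidef}. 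The three pairs $(\ell,k) \in \{(1,1),(1,2),(2,1)\}$ contribute, via $\Xi_k^\pm$, exactly $\xi_{n,1}^\pm + \xi_{n,2}^\pm - \tfrac12 \delta_{2|n}\xi_{n/2,1}^\pm$ (the pair $(2,1)$ only appearing when $n$ is even). What remains is to show every other contribution lies in $\bigO(5^n(\lfloor n/3\rfloor-1)!)$.

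For the $G_k^\pm$ part, Lemma~\ref{lmm:Ganalytic} gives $|G_k^\pm(1/u)| \leq C|u|^{\lceil k/6\rceil}$ on $|u|\leq 1/8$, so Cauchy's inequality at radius $1/8$ yields $|[z^{-N}]G_k^\pm(z)| \leq C\cdot 8^{N-\lceil k/6\rceil}$. Summing the geometric series in $k$ and then over $\ell | n$ bounds the total $G$-contribution by $\bigO(8^n)$, which by Stirling is dwarfed by $5^n(\lfloor n/3\rfloor-1)!$.

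For the remaining $\Xi$-contributions, Lemma~\ref{lmm:xibound} supplies $|\xi_{N,k}^\pm| \leq C\cdot 4^N(\lfloor N/k\rfloor -1)!$. The critical case $\ell=1$, $k\geq 3$ contains at most $n$ terms, each of size $\bigO(4^n(\lfloor n/3\rfloor-1)!)$; since $n \leq (5/4)^n$ for large $n$, the total lies in $\bigO(5^n(\lfloor n/3\rfloor-1)!)$. For $\ell=2$, $k\geq 2$ the bound becomes $\bigO(n\cdot 4^{n/2}(\lfloor n/4\rfloor-1)!)$, and for $\ell\geq 3$ it is even smaller; both fit comfortably inside the target error.

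The main point of care is the extra factor $n$ produced by the sum over $k$ at $\ell=1$; this is precisely what forces the base to be inflated from $4$ to $5$ and dictates the shape of the error bound. Aside from this bookkeeping, the argument is a direct combination of the coefficient-extraction identity from Theorem~\ref{thm:genfun} with the uniform coefficient bounds of Lemmas~\ref{lmm:Ganalytic} and~\ref{lmm:xibound}.
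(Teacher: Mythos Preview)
Your proposal is correct and follows essentially the same approach as the paper: extract the $z^{-n}$ coefficient from Theorem~\ref{thm:genfun}, split $\Psi_k^\pm$ into the analytic part $G_k^\pm$ and the Bernoulli part, isolate the three dominant terms $(\ell,k)\in\{(1,1),(1,2),(2,1)\}$, bound the $G$-contribution by $\bigO(8^n)$ via Lemma~\ref{lmm:Ganalytic}, and bound the remaining $\xi$-terms via Lemma~\ref{lmm:xibound}. The only cosmetic differences are that the paper bounds the entire $G$-sum at once as a single holomorphic function (rather than term-by-term plus a geometric sum over $k$) and uses the cruder ``at most $n^2$ terms'' count for the $\xi$-remainder instead of your case split by $\ell$; both routes land on the same $\bigO(5^n(\lfloor n/3\rfloor-1)!)$ error.
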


\begin{proof}
Let $h^\pm_n$ denote the expansion coefficients of 
$ G^\pm(z)= \sum_{k,\ell \geq 1} \frac{\mu(\ell)}{\ell} G^\pm_k(z^{\ell}) = \sum_{n=1}^\infty h^{\pm}_n z^{-n}. $
By Lemma~\ref{lmm:Ganalytic},
$G_k^\pm(1/u)$
is holomorphic in $u$ and bounded by $|G_k^\pm(1/u)| \leq \frac{14534}{343} |u|^{\lceil\frac{k}{6}\rceil}$ 
in a disc of size $\frac18$ around the origin.
The double sum \hbox{$ G^\pm(1/u) = \sum_{k,\ell \geq 1} \frac{\mu(\ell)}{\ell} G^\pm_k(u^{-\ell}) $}
converges uniformly for all $|u| \leq \frac18$,
due to the bound on $|G_k^\pm(1/u)|$.
  So, $G^\pm(1/u)$ is also  holomorphic in this domain.  Hence, there is a constant $C$ such that the upper bound $|h^\pm_n| \leq C 8^n$ holds for all $n \geq 1$.

From Theorem~\ref{thm:genfun}, the definitions in Eqs.~\eqref{eq:PsiDef}, \eqref{eq:Gdef}, and \eqref{eq:xidef}, we get by extracting coefficients
$$
\chi(\mathcal{GC}_\pm^{n+1}) \;=\;
h^{\pm}_{n} \;+\; 
\sum_{\ell | n} \frac{ \mu(\ell)}{\ell } \,\sum_{k=1}^{n/\ell} \, \xi^\pm_{\frac{n}{\ell},k}\,,
\quad \text{for all } n \geq 1\,.
$$
Due to Lemma~\ref{lmm:xibound}, there is a constant $C'$ such that 
$|\xi^\pm_{\frac{n}{\ell},k}| \leq C' 4^{n} (\lfloor \frac{n}{3} \rfloor -1)!$ 
if $n\geq k\cdot \ell > 2$.
The sum above has at most $n^2$ terms, $\mu(1)=1$, $\mu(2)=-1$, and $|\mu(\ell)|\leq 1$, so
\begin{gather*} \bigl| \chi(\mathcal{GC}_\pm^{n+1}) \;-\; \xi^\pm_{n,1} \;-\; \xi^\pm_{n,2} \;+\; \frac12\, \delta_{2|n} \, \xi^\pm_{\frac{n}{2},1} \bigr| \;\leq\; C\, 8^n\;+\; C'\, 4^n\, n^2 \, \left(\left\lfloor \frac{n}{3} \right \rfloor -1\right)!\,, \end{gather*}
and therefore it is in $\bigO(4^n (\lfloor n/3\rfloor+1) !)$.
\end{proof}

The following two propositions compute the explicit asymptotic behavior of the coefficients $\xi^\pm_{n,1}$ for all $n$ and the one of $\xi^\pm_{n,2}$ for all even $n$. For odd $n$, the large-$n$ asymptotic growth rate of $\xi^\pm_{n,1}$ overshadows one of $\xi^\pm_{n,2}$ and $\xi^\pm_{n/2,1}$.
For even $n$, $\xi^\pm_{n,1}$ vanishes and the $\xi^\pm_{n,2}$ term dominates: 
\begin{proposition}
\label{prop:oddxi}
For all $n \geq 1$,
$\xi^\pm_{2n,1} = 0 $
and 
$\xi^\pm_{2n-1,1} \sim \pm (-1)^n \sqrt{{\pi}/{n^3}} \left({n}/{(e \pi)}\right)^{2n} $
as $n\rightarrow \infty$.
\end{proposition}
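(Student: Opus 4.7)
The key observation is that $M_1(z)=z$, since by definition $M_1(z)=\frac{1}{1}\sum_{d\mid 1}\mu(d) z^{1/d}=z$. Substituting this into the defining identity~\eqref{eq:xidef} for $k=1$ collapses it to
\begin{align*}
\sum_{n\geq 1}\xi^\pm_{n,1}\,z^{-n}=\mp\sum_{m\geq 1}\frac{B_{m+1}}{m(m+1)}\,z^{-m},
\end{align*}
so that $\xi^\pm_{n,1}=\mp B_{n+1}/(n(n+1))$ for every $n\geq 1$ (and $\xi^\pm_{0,1}=0$ because the right-hand series starts at $m=1$). The vanishing statement $\xi^\pm_{2n,1}=0$ is then immediate from the vanishing of the odd-indexed Bernoulli numbers $B_{2n+1}$ for $n\geq 1$.

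For the asymptotic claim, plug Euler's formula~\eqref{eq:euler} (with index $2n$) into the expression for $\xi^\pm_{2n-1,1}$:
\begin{align*}
\xi^\pm_{2n-1,1}=\mp\frac{B_{2n}}{(2n-1)(2n)}=\pm(-1)^{n}\cdot 2\cdot\frac{(2n-2)!}{(2\pi)^{2n}}\,\zeta(2n).
\end{align*}
It then remains to extract the large-$n$ behavior of the right-hand side. Use Stirling's formula $(2n-2)!\sim\sqrt{2\pi(2n-2)}\,((2n-2)/e)^{2n-2}$ together with the elementary expansion $(1-1/n)^{2n-2}=e^{-2}(1+O(1/n))$; this rewrites $((2n-2)/e)^{2n-2}$ as $(2n)^{2n-2}e^{-2n}(1+O(1/n))$ and then, after dividing by $(2\pi)^{2n}$, yields
\begin{align*}
\frac{(2n-2)!}{(2\pi)^{2n}}\sim\tfrac{1}{2}\sqrt{\pi/n^3}\,\bigl(n/(e\pi)\bigr)^{2n}.
\end{align*}
Since $\zeta(2n)=1+O(2^{-2n})$, multiplying by $\pm 2(-1)^n$ gives the asserted asymptotic.

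I expect no serious obstacle: the whole proposition reduces to the identity $M_1(z)=z$, which trivializes the $k=1$ generating series, together with Euler's evaluation of $\zeta(2n)$ and a standard Stirling expansion. The only point requiring attention is a careful accounting of signs (the $\mp$ from~\eqref{eq:xidef}, the $(-1)^{n+1}$ from~\eqref{eq:euler}, and the $-1$ from $(2n-1)(2n)=(2n)!/(2n-2)!$) to confirm that they combine into the stated $\pm(-1)^n$.
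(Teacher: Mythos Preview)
Your proposal is correct and follows essentially the same approach as the paper: both use $M_1(z)=z$ to reduce $\xi^\pm_{n,1}$ to $\mp B_{n+1}/(n(n+1))$, invoke the vanishing of odd-indexed Bernoulli numbers, apply Euler's formula~\eqref{eq:euler}, and finish with Stirling's expansion together with $\zeta(2n)\to 1$. Your write-up simply spells out the Stirling computation in more detail than the paper does.
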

\begin{proof}
Eq.~\eqref{eq:xidef}, $M_1(z) = z$, and the vanishing of $B_m$ for all odd $m$ imply 
$\xi^\pm_{2n,1} = 0$ for all $n \geq 1$.
We get from Eq.~\eqref{eq:euler} that
$\xi^\pm_{2n-1,1} = \pm 2 (-1)^{n} {(2n-2)!}\cdot \zeta(2n)/{(2\pi)^{2n}}. $
The statement follows from Stirling's expansion $n! = \sqrt{2\pi n} n^n e^{-n} (1+ \bigO(1/n))$ and $\zeta(n) = 1 +\bigO({1}/{2^n})$ as $n\rightarrow \infty$.
\end{proof}

\begin{proposition}
\label{prop:xi22asy}
$$
\xi^\pm_{2n,2} \;\sim\;
\pm\,
\frac{1}{\sqrt{2n \pi}}\,
\left(
\frac{n}{e\pi}
\right)^n\,
e^{\sqrt{\frac{\pi n}{2}}}\,
\cos
\left(
\sqrt{\frac{\pi n}{2}} 
\;-\;\pi \frac{n+\frac34}{2}
\right)
\text{ as } n \rightarrow \infty\,.
$$
\end{proposition}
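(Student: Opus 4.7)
The plan is to express $\xi_{2n,2}^\pm$ as an oscillatory integral and evaluate it by steepest descent through a complex saddle; the super-exponential factor $e^{\sqrt{\pi n/2}}$ and the cosine in the statement arise naturally from this analysis.

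First I would extract $\xi_{2n,2}^\pm$ from Eq.~\eqref{eq:xidef}. With $M_2(z)=(z^2-z)/2$, expanding $M_2(z)^{-m}=2^m z^{-2m}(1-1/z)^{-m}$ geometrically, reading off $[z^{-2n}]$, using that $B_{m+1}=0$ for even $m\geq 2$, and invoking Euler's identity~\eqref{eq:euler} for $B_{2j}$ produces the finite alternating sum
\[
\xi_{2n,2}^\pm \;=\; \mp\frac{1}{\pi^2}\sum_{k\geq 0}(-1)^k\frac{(2k)!\,\zeta(2k+2)}{\pi^{2k}}\binom{2n-2k-2}{2k}.
\]
I would then insert $(2k)!=\int_0^\infty t^{2k}e^{-t}\,\dd t$, argue that replacing $\zeta(2k+2)$ by $1$ contributes only a geometrically small correction, and interchange the sum with the integral. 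The inner sum $\sum_k\binom{2n-2k-2}{2k}(-t^2/\pi^2)^k$ admits the closed form obtained from the generating-function identity $\sum_M z^M\sum_k\binom{M-2k}{2k}x^k=(1-z)/((1-z)^2-xz^4)$ via the factorization $(1-z-\sqrt{x}\,z^2)(1-z+\sqrt{x}\,z^2)$, combined with the classical formula $\sum_k\binom{M-k}{k}y^k=(\rho^{M+1}-\tilde\rho^{M+1})/\sqrt{1+4y}$ for the roots $\rho,\tilde\rho$ of $w^2-w-y=0$. This expresses the inner sum as $\Re\bigl[(\rho(it/\pi)^{2n-1}-\tilde\rho(it/\pi)^{2n-1})/\sqrt{1+4it/\pi}\bigr]$. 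Since $|\tilde\rho(it/\pi)|<1<|\rho(it/\pi)|$ for every $t>0$, the $\tilde\rho$ contribution is exponentially subdominant and may be dropped, reducing the problem to
\[
\xi_{2n,2}^\pm \;\sim\; \mp\frac{1}{\pi^2}\,\Re\!\int_0^\infty\frac{e^{-t}\,\rho(it/\pi)^{2n-1}}{\sqrt{1+4it/\pi}}\,\dd t.
\]

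The main technical step, and the principal obstacle, is steepest-descent analysis of this integral. From $\log\rho(y)=\tfrac12\log(4y)+(4y)^{-1/2}+O(y^{-3/2})$ as $|y|\to\infty$, the phase $E(t)=-t+(2n-1)\log\rho(it/\pi)$ has a complex saddle at $t_\ast=n-(1-i)\sqrt{\pi n/8}-\tfrac12+O(1)$. I would deform the contour into the upper half-plane through $t_\ast$; because $E''(t_\ast)\approx -1/n$ is real, the steepest-descent direction is horizontal and the Gaussian fluctuations contribute $\sqrt{2\pi n}$. A careful expansion of $E(t_\ast)$ must keep every $O(1)$ contribution --- the $-\tfrac12$ shift in $t_\ast$, the $i\pi/8$ arising from the $\delta^2/(2n^2)$ term of $\log(1+\delta/n)$, the $-i\pi/4$ from $(1-i)^2=-2i$ in $(n-\tfrac12)(1-i)\sqrt{\pi/(2t_\ast)}$, and the $-\pi/4$ from $\sqrt{1+4it_\ast/\pi}\approx\sqrt{2n/\pi}(1+i)$. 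Assembling these gives $e^{E(t_\ast)}/\sqrt{1+4it_\ast/\pi}\cdot\sqrt{2\pi n}=\pi^{3/2}(2n)^{-1/2}(n/(e\pi))^n e^{\sqrt{\pi n/2}}\,e^{i(n\pi/2-\sqrt{\pi n/2}-5\pi/8)}(1+o(1))$. Taking the real part and multiplying by $\mp 1/\pi^2$ yields $\mp(2\pi n)^{-1/2}(n/(e\pi))^n e^{\sqrt{\pi n/2}}\cos(n\pi/2-\sqrt{\pi n/2}-5\pi/8)$, which matches the stated asymptotic after applying $-\cos(\phi)=\cos(\phi+\pi)$ and the evenness of cosine. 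The principal difficulty is precisely this bookkeeping: omitting any one of the four listed $O(1)$ phase contributions would shift the cosine by a nonzero multiple of $\pi/4$.
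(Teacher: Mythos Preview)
Your outline is correct and reaches the stated asymptotic, but via a heavier route than the paper's. Both begin identically: expand $1/M_2(z)^m$, invoke Euler's formula~\eqref{eq:euler}, replace $\zeta(\cdot)$ by $1$ with controlled error, and pass to an integral via the $\Gamma$ function. The divergence is in \emph{which} factorial receives the integral representation. You set $(2k)!=\int_0^\infty t^{2k}e^{-t}\,\dd t$, leaving the inner sum $\sum_k\binom{2n-2k-2}{2k}(-t^2/\pi^2)^k$, which you close via a Fibonacci-type identity and then attack by complex steepest descent through a saddle near $t\approx n$. The paper instead reindexes the same sum as $f_n(A)=\sum_\ell A^\ell(n+\ell-1)!/(2\ell)!$ with $A=-i\pi$ (Lemma~\ref{lmm:sumA}), writes $(n+\ell-1)!=\int_0^\infty t^{n+\ell-1}e^{-t}\,\dd t$, and sums the resulting geometric-type series to the elementary $\cosh(\sqrt{At})$. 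After the substitution $t=ne^y$ this becomes a real Laplace integral of exactly the shape treated by Lemma~\ref{lmm:shiftGaus}, so the delicate $O(1)$ phase bookkeeping you flag as the principal obstacle collapses to a single Gaussian evaluation yielding the factor $e^{A/8}$ directly. The paper thus trades your generating-function identity and contour deformation for one change of summation index and a real-variable argument.

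Two slips to fix when you execute: the correct expansion is $\log\rho(y)=\tfrac12\log y+\tfrac12\,y^{-1/2}+O(y^{-3/2})$, without the factor of $4$ inside the logarithm (your stated version would introduce a spurious $2^{2n-1}$); and the claim $|\tilde\rho(it/\pi)|<1$ fails for large $t$, though the weaker inequality $|\rho(it/\pi)|>|\tilde\rho(it/\pi)|$ does hold for all $t>0$ and, combined with localization near the saddle, still suppresses the $\tilde\rho$ contribution by a factor of order $e^{-\sqrt{2\pi n}}$.
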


The proof of this proposition  relies on another integral expansion. We postpone this proof and first prove Theorems~\ref{thm:eulerGCeven} and Theorem~\ref{thm:eulerGCodd}.

\begin{proof}[Proof of Theorems~\ref{thm:eulerGCeven}, \ref{thm:HGCbound}, and~\ref{thm:eulerGCodd}]
By Proposition~\ref{prop:oddxi},
$\xi^\pm_{n,1} \in \bigO\left(\left({n}/{(2e\pi)} \right)^{n-\frac12}\right),$ and similarly
$\xi^\pm_{\frac{n}{2},1} \in \bigO\left(\left({n}/{(4e\pi)} \right)^{\frac{n-1}{2}}\right)$.
From Proposition~\ref{prop:xi22asy}, we get 
$\xi^\pm_{n,2} \in \bigO\left( e^{\sqrt{\frac{\pi n}{4}}} \left({n}/{(2e\pi)} \right)^{\frac{n-1}{2}}\right)$.

So, if $g$ is even and large, then by Proposition~\ref{prop:GC_np1}, $\chi(\GC^{g}_\pm) \sim \xi_{g-1,1}^\pm$,
as $\xi^\pm_{\frac{n}{2},1}$ and $\xi^\pm_{n,2}$ are negligible in comparison to $\xi^\pm_{n,1}$ which is 
non-zero if $n=g-1$ is odd.
If $g$ is odd and large then, $\xi^\pm_{g-1,1}$ vanishes,
and the asymptotic contribution from $\xi^\pm_{n,2}$ dominates.
Proposition~\ref{prop:cos} ensures that $\xi^\pm_{\frac{n}{2},1}$ is negligible in comparison to $\xi^\pm_{n,2}$.
The formula for the large-$g$ behavior in Theorems~\ref{thm:eulerGCeven} and \ref{thm:eulerGCodd} follows from 
Propositions~\ref{prop:oddxi},~\ref{prop:xi22asy} and $\lim_{g\rightarrow \infty}(1-1/g)^{g/2} = e^{-1/2}$.
Theorem~\ref{thm:HGCbound} and the estimate in Theorem~\ref{thm:eulerGCodd} follow, as explained in Section~\ref{sec:dioph}, from Proposition~\ref{prop:cos}.
\end{proof}

Proposition~\ref{prop:xi22asy}, which remains to be proven, requires two technical lemmas. These contain the last key step to the proof of the asymptotic formula of Theorem~\ref{thm:eulerGCeven}, which is to rephrase a heavily oscillating sum as an integral that we can evaluate in the desired limit.  We start by proving an asymptotic formula for the integral expression that already appeared in Lemma~\ref{lmm:StirlingBound}:

\begin{lemma}
\label{lmm:shiftGaus}
Let $g(z,y,\alpha) = - z( e^y-1-y) + \alpha \sqrt{z} (e^{ \frac12 y } -1). $
For all $\alpha \in \C$,
$$
\int_{-\infty}^\infty 
e^{
g(z,y,\alpha)
}\, \dd y
\;=\;
\sqrt{
\frac{2\pi}{z}
}\,
e^{\frac18 \alpha^2}
\;+\;\bigO(z^{-\frac34})
\text{ as } z\rightarrow \infty\,.
$$

\end{lemma}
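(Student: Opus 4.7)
The plan is a Laplace-method argument: localize the integral to $|y| \le \varepsilon := z^{-5/12}$, replace $g(z,y,\alpha)$ by its quadratic-plus-linear Taylor polynomial at $y=0$, and treat the remainder as an $\bigO(z^{-1/4})$ perturbation of the resulting Gaussian.

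In order, the steps would be: (1) Apply Lemma~\ref{lmm:StirlingBound} with this $\varepsilon$ to reduce the integral over $\R$ to one over $[-\varepsilon,\varepsilon]$. For $z$ sufficiently large (depending on $|\alpha|$), the hypotheses $\varepsilon < 2$ and $z\varepsilon^{2} = z^{1/6} \ge 16|\alpha|^{2}$ both hold, and the truncation error $z^{-1}e^{-z/2} + 2\sqrt{2\pi/z}\exp(-z^{1/6}/8)$ is super-polynomially small. (2) Write
\begin{align*}
g(z,y,\alpha) = -\tfrac{z}{2}\,y^{2} + \tfrac{\alpha\sqrt{z}}{2}\,y + R(z,y,\alpha),
\end{align*}
where $R = -z(e^{y}-1-y-\tfrac{y^{2}}{2}) + \alpha\sqrt{z}(e^{y/2}-1-\tfrac{y}{2})$, and apply elementary Taylor bounds on $|y| \le \varepsilon$ to get $|R| \le \bigO(z\varepsilon^{3}) + \bigO(|\alpha|\sqrt{z}\,\varepsilon^{2}) = \bigO(z^{-1/4})$ uniformly, whence $|e^{R}-1| = \bigO(z^{-1/4})$. (3) Split the truncated integral as
\begin{align*}
\int_{-\varepsilon}^{\varepsilon} e^{-zy^{2}/2 + \alpha\sqrt{z}y/2}\,\dd y
+ \int_{-\varepsilon}^{\varepsilon} e^{-zy^{2}/2 + \alpha\sqrt{z}y/2}\bigl(e^{R}-1\bigr)\,\dd y,
\end{align*}
extend the first integral back to $\R$ at exponentially small cost, and evaluate it by completing the square to obtain $\sqrt{2\pi/z}\,e^{\alpha^{2}/8}$. (4) Bound the second integral by $\bigO(z^{-1/4})\cdot \int_{\R} |e^{-zy^{2}/2 + \alpha\sqrt{z}y/2}|\,\dd y = \bigO(z^{-1/4})\cdot \sqrt{2\pi/z}\,e^{\Re(\alpha)^{2}/8} = \bigO(z^{-3/4})$, using $|e^{\alpha\sqrt{z}y/2}| = e^{\Re(\alpha)\sqrt{z}y/2}$ for real $y$.

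The main subtlety I anticipate is the careful handling of complex $\alpha$: every modulus bound on the integrand must be expressed in terms of $\Re(\alpha)$ rather than $\alpha$ itself, and the threshold beyond which each error estimate is valid depends on $|\alpha|$, so the asymptotics are understood at fixed $\alpha$. Everything else is bookkeeping: the scale $\varepsilon = z^{-5/12}$ is chosen precisely so that the cubic remainder $z\varepsilon^{3} = z^{-1/4}$ is small, which after multiplication by the Gaussian normalization $\sqrt{2\pi/z}$ produces exactly the $\bigO(z^{-3/4})$ error advertised in the statement.
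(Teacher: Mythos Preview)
Your proposal is correct and follows essentially the same approach as the paper: truncate via Lemma~\ref{lmm:StirlingBound} with $\varepsilon = z^{-5/12}$, Taylor-expand $g$ to its quadratic-plus-linear part with an $\bigO(z^{-1/4})$ remainder, and complete the square in the resulting Gaussian. The only cosmetic difference is that the paper packages the remainder as a multiplicative $(1+\bigO(z^{-1/4}))$ on the truncated Gaussian before extending back to $\R$, whereas you split additively and bound the $(e^{R}-1)$ piece against $\int_{\R}|e^{-zy^{2}/2+\alpha\sqrt{z}y/2}|\,\dd y$; both routes yield the same $\bigO(z^{-3/4})$.
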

\begin{proof}
From Lemma~\ref{lmm:StirlingBound}, we get 
$$
\int_{-\infty}^\infty 
e^{g(z,y,\alpha)}
 \,\dd y\;=\;
\int_{-z^{-\frac{5}{12}}}^{z^{-\frac{5}{12}}} 
e^{g(z,y,\alpha)}
\,\dd y
\;+\;\bigO(z^{-\frac12}\, \exp(-z^{\frac16})) \text{ as } z \rightarrow \infty\,.
$$
For $y\in [-z^{-\frac{5}{12}},z^{-\frac{5}{12}}]$ and large $z$,
we can approximate the exponent $g(z,y,\alpha)$ by a shifted quadric,
i.e.,~$ g(z,y,\alpha) = -z\frac{y^2}{2} + \frac12 \alpha \sqrt{z} y + \mathcal R(z,y,\alpha) $, where due to $\sum_{n\geq k} \frac{|y|^n}{n!} \leq \frac{1}{k!} |y|^k e^{|y|}$,
$$
|\mathcal R(z,y,\alpha)|
\;\leq\; 
\frac{z|y|^3}{3!}\,e^{|y|}\; +\;
|\alpha| \,\sqrt{z}\, \frac{(|y|/2)^2}{2!}\, e^{|y|/2}
\;\leq\;
e\,
\frac{
z^{-\frac14}}{6}
\;+\;
\sqrt{e}\,
|\alpha| \, \frac{z^{-\frac13}}{8}
\text{ if } |y|\; \leq\; z^{-\frac{5}{12}}\;\leq\; 1\,.
$$ 
Using also $|\exp(\mathcal R(z,y,\alpha))-1| \leq \frac{|\mathcal R(z,y,\alpha)|}{1-|\mathcal R(z,y,\alpha)|} \in \bigO(z^{-\frac14})$ as $z\rightarrow \infty$ results in
$$
\int_{-\infty}^\infty 
e^{g(z,y,\alpha)}
 \,\dd y\;=\;
\int_{-z^{-\frac{5}{12}}}^{z^{-\frac{5}{12}}}
e^{-z\frac{y^2}{2} + \frac12 \alpha \sqrt{z} y}
 \,\dd y\cdot \bigl(1 \;+\; \bigO(z^{-\frac14})\bigr)\,.
$$
If $z$ is large enough such that $z^{\frac{1}{12}} \geq 2|\alpha|$
and hence $\frac{z\cdot y^2}{4} \geq \frac12 |\alpha|\sqrt{z}|y|$ for all $|y| \geq z^{-\frac{5}{12}}$, then
$$
\int_{z^{-\frac{5}{12}}}^{\infty}
e^{-z\frac{y^2}{2} + \frac12 \alpha \sqrt{z} |y|}
 \dd y
\,\leq\,
\int_{z^{-\frac{5}{12}}}^{\infty}
e^{-z\frac{y^2}{4} + \frac12 |\alpha| \sqrt{z} |y| -z\frac{y^2}{4}}
 \dd y
\,\leq\,
\int_{z^{-\frac{5}{12}}}^{\infty}
e^{-z\frac{y^2}{4}}
 \dd y
\,\in\, \bigO\Bigl(z^{-\frac12} e^{-z^{\frac16}/4} \Bigr)\,,
$$
by Lemma~\ref{lmm:compGaus}.
Hence, we may complete the Gaussian integral, evaluate it, and obtain,
\begin{gather*} \int_{-\infty}^{\infty} e^{-z\frac{y^2}{2} + \frac12\alpha \sqrt{ z} y} \,\dd y \;=\; \int_{-\infty}^{\infty} e^{-\frac{z}{2}\left(y-\frac{\alpha}{2\sqrt{z}}\right)^2 + \frac18 \alpha^2} \,\dd y\;=\; \sqrt{ \frac{2\pi}{z}} e^{\frac18\alpha^2} \,. \qedhere \end{gather*}
\end{proof}

We will use this integral to evaluate the following sum,
which relates closely to $\xi^\pm_{n,2}$:
\begin{lemma}
\label{lmm:sumA}
For all $A \in \C$ that do not fall on the negative real axis,
\begin{align*} \sum_{\ell=0}^{n-1} A^\ell\, \frac{(n+\ell-1)!}{(2\ell)!} \;=\; \sqrt{\frac{\pi}{2n}}\, n^n \,e^{-n + \sqrt{An} +\frac{A}{8}} \, \bigl(1\;+\;\bigO(n^{-\frac14})\bigr), \end{align*}
where we take the square root of $A$ with a positive real part.
\end{lemma}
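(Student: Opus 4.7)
The strategy is to represent the factorial as a Gamma integral, swap sum and integral to produce a hyperbolic cosine, and then apply Lemma~\ref{lmm:shiftGaus} after a Stirling-style substitution.

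First I would use $(n+\ell-1)! = \int_0^\infty t^{n+\ell-1}e^{-t}\dd t$ and interchange the finite sum with the integral:
\begin{align*}
\sum_{\ell=0}^{n-1} A^\ell \frac{(n+\ell-1)!}{(2\ell)!}
= \int_0^\infty t^{n-1} e^{-t} \sum_{\ell=0}^{n-1} \frac{(At)^\ell}{(2\ell)!}\,\dd t.
\end{align*}
Extending the inner sum to $\ell=\infty$ turns it into $\cosh(\sqrt{At})$, which is well-defined since $\cosh$ is even and entire (no branch choice needed). The tail $\sum_{\ell\geq n} A^\ell(n+\ell-1)!/(2\ell)!$ is easy to control: the ratio of consecutive summands, $A(n+\ell)/((2\ell+2)(2\ell+1))$, is bounded in modulus by $|A|/(2n)$ whenever $\ell\geq n$, so by a geometric-series argument the tail is $\bigO(|A|^n/n)$. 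This is exponentially smaller than the target $n^n e^{-n}$ and is absorbed into the final error.

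Next, the substitution $t = n e^y$ rewrites the resulting integral as
\begin{align*}
n^n e^{-n} \int_{-\infty}^\infty e^{-n(e^y-1-y)} \cosh\bigl(\sqrt{An}\,e^{y/2}\bigr)\,\dd y.
\end{align*}
Splitting $\cosh(\sqrt{An}\,e^{y/2}) = \tfrac12\bigl(e^{\sqrt{An}} e^{\sqrt{An}(e^{y/2}-1)} + e^{-\sqrt{An}} e^{-\sqrt{An}(e^{y/2}-1)}\bigr)$ matches each half with an exponent of the form $g(n, y, \pm\sqrt{A})$ from Lemma~\ref{lmm:shiftGaus}; here I use $\sqrt{A}\sqrt{n}=\sqrt{An}$, valid since $\Re\sqrt{A}>0$ and $n>0$. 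Applying that lemma with $\alpha = \pm\sqrt{A}$ (so $\alpha^2 = A$ in both cases) yields
\begin{align*}
\int_{-\infty}^\infty e^{g(n, y, \pm\sqrt{A})}\,\dd y = \sqrt{\tfrac{2\pi}{n}}\,e^{A/8} + \bigO(n^{-3/4}).
\end{align*}

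Finally, the hypothesis that $A$ lies off the negative real axis $\R_{\leq 0}$ guarantees $\Re\sqrt{A} > 0$, so $|e^{-\sqrt{An}}|$ is exponentially small while $|e^{\sqrt{An}}|\geq 1$. Only the $e^{\sqrt{An}}$ branch contributes to the leading order, giving
\begin{align*}
\frac{n^n e^{-n}}{2}\, e^{\sqrt{An}} \sqrt{\tfrac{2\pi}{n}}\, e^{A/8}\bigl(1 + \bigO(n^{-1/4})\bigr)
= \sqrt{\tfrac{\pi}{2n}}\, n^n e^{-n + \sqrt{An} + A/8}\bigl(1+\bigO(n^{-1/4})\bigr).
\end{align*}
The relative $\bigO(n^{-1/4})$ appears because the absolute error $\bigO(n^{-3/4})$ from Lemma~\ref{lmm:shiftGaus} is divided by the leading prefactor $\sqrt{2\pi/n}\sim n^{-1/2}$. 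The main obstacle is bookkeeping the three error sources---the sum-truncation tail, the Lemma~\ref{lmm:shiftGaus} asymptotic, and the subdominant $e^{-\sqrt{An}}$ piece---so that they all collapse into a single relative $\bigO(n^{-1/4})$. The exclusion of $\R_{\leq 0}$ is indispensable: on that axis both exponentials $e^{\pm\sqrt{An}}$ have equal modulus and can cancel, destroying the leading-order behavior.
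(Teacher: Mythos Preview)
Your proof is correct and follows essentially the same route as the paper: represent the factorial via a Gamma integral, recognize the resulting sum as $\cosh(\sqrt{At})$, substitute $t = ne^y$, and apply Lemma~\ref{lmm:shiftGaus} to each of the two exponential branches. The only cosmetic difference is your tail estimate---you use a ratio test to get $\bigO(|A|^n/n)$, whereas the paper uses the factorial inequality $(2n+2\ell)!\geq (2n+\ell)!\,\ell!$ to obtain the uniform bound $|\mathcal{R}_n(A)|\leq e^{|A|}$; both are negligible against the super-exponential main term.
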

\begin{proof}
We denote the left-hand side by $f_n(A)$.
Completing the sum over $\ell$ and using the integral formula for the $\Gamma$ function 
$\Gamma(m) = (m-1)!= \int_{0}^\infty t^m e^{-t} \frac{\dd t}{t}$, valid for all integers $m\geq 1$, gives
$$
f_n(A) 
\;=\;
\sum_{\ell=0}^{\infty}
A^\ell\,
\frac{(n+\ell-1)!}{(2\ell)!}
\;+\;\mathcal{R}_n(A)
\;=\;
\sum_{\ell=0}^{\infty}
\frac{
A^\ell
}{
(2\ell)!}\,
\int_0^\infty t^{n+\ell}\,
e^{-t} \,\frac{\dd t}{t}
\;+\;\mathcal{R}_n(A)\,.
$$
Because $(2n+2\ell)! \geq (2n+\ell)!\ell!$, we get the following bound on the remainder term,
\begin{align*} |\mathcal{R}_n(A) | \leq \sum_{\ell=n}^{\infty} |A|^\ell \frac{(n+\ell-1)!}{(2\ell)!} = \sum_{\ell=0}^{\infty} |A|^{\ell+n} \frac{(2n+\ell-1)!}{(2n+2\ell)!} \leq \sum_{\ell=0}^{\infty} |A|^{\ell+n} \frac{(2n+\ell-1)!}{(2n+\ell)! \ell!} \leq |A|^{n} e^{|A|}. \end{align*}
We use the identity $\frac12 ( e^{\sqrt{x}} + e^{-\sqrt{x}} ) = \frac12 \sum_{\ell\geq 0}( 1- (-1)^\ell) \frac{\sqrt{x}^\ell}{\ell!} = \sum_{\ell\geq 0}\frac{x^\ell}{(2\ell)!}$
to perform the sum in our expression for $f_n(A)$ and get
\begin{align*} f_n(A) \;=\; \frac12 \int_0^\infty t^{n} \, \left( e^{\sqrt{A t } } + e^{-\sqrt{A t} } \right)\, e^{-t}\, \frac{\dd t}{t} \;+\;\mathcal{R}_n(A)\,. \end{align*}
The integral above is absolutely convergent for all positive values of $n$ and all $A \in \C$. So, exchanging 
sum and integral is justified. We will find the large-$n$ asymptotic expansion of each summand above.
With the change of variables $t = n e^y$, the integrals in the sum become
$$
\int_0^\infty t^{n}\,
e^{\pm\sqrt{A t } -t} \,
 \frac{\dd t}{t}
\;=\;
n^n\, e^{-n \pm \sqrt{An} }\,
\int_{-\infty}^\infty 
e^{g(n,y,\pm \sqrt{A})}\,
 \dd y
\;=\;
\sqrt{\frac{2\pi}{n}}\,
n^n\, e^{-n \pm \sqrt{An} +\frac{A}{8}}\,
\bigl(1+\bigO(n^{-\frac14})\bigr),
$$
where $g(n,y,\pm \sqrt{A}) = - n( e^y-1-y) \pm \sqrt{An } (e^{ \frac12 y } -1) $ and we used Lemma~\ref{lmm:shiftGaus} in the last step.
As we assume that $\sqrt{A}$ is not purely imaginary, 
only the square root of $A$ with a positive real part significantly contributes in the limit $n\rightarrow \infty$.
\end{proof}

\begin{proof}[Proof of Proposition~\ref{prop:xi22asy}]
Using $1/(1-x)^m = \sum_{\ell \geq 0} \binom{m+\ell-1}{\ell} x^\ell$ gives 
\begin{align*} \mp\; \sum_{m=1}^\infty \frac{B_{m+1}}{m(m+1)}\, \frac{2^m}{(z^2-z)^m} &\;=\; \mp\; \sum_{n \geq 2} z^{-n} \sum_{\substack{m\geq 1,\ell\geq 0\\ 2m+\ell=n}} \frac{B_{m+1}}{m(m+1)}\, 2^m\, \binom{m+\ell-1}{\ell}\,. \end{align*}
If $n$ is even, the last sum only has support for even $\ell$. Hence, by Eq.~\eqref{eq:xidef} and $M_2(z) = \frac12(z^2-z)$,
$$
\xi^\pm_{2n,2} \;=\; 
\mp\;
\sum_{\substack{m\geq 1,\ell\geq 0\\ m+\ell=n}} 
\frac{B_{m+1}}{m(m+1)} \,2^m \,\binom{m+2\ell-1}{2\ell}\,.
$$
Observe that $\Re\left( i^{m+1} \right) = 0$ for even $m$
and $\Re\left( i^{m+1} \right) = (-1)^{(m+1)/2}$ for odd $m$. By Eq.~\eqref{eq:euler}, 
$$
\frac{B_{m+1}}{m(m+1)} \;=\;
-\; 2 \Re\left( i^{m+1} \right)\, \frac{(m-1)!}{(2\pi)^{m+1}}\, \zeta(m+1)
\text{ for all } m \geq 1\,.
$$
We hence get the following expression for $\xi^\pm_{2n,2}$
by also using $\binom{a}{b} = {a!/(b!(a-b)!)}$:
\begin{align*} \xi^\pm_{2n,2} &\;=\; \pm\; \frac{1}{\pi}\, \sum_{\ell=0}^{n-1} \Re\left( i^{n-\ell+1} \right)\, \frac{(n+\ell-1)!}{\pi^{n-\ell} (2\ell)!}\, \zeta(n-\ell+1)\,. \end{align*}
Recall that $\zeta(m) = \sum_{k \geq 1} \frac{1}{k^m}$ and
therefore for all $m\geq 2$,
$$|\zeta(m)-1| \,=\, 2^{-m} \sum_{k\geq 1} \frac{2^m}{(k+1)^m} 
\,=\,
2^{-m} \left( \sum_{k\geq 1} \frac{1}{k^m} 
\,+\, 
\sum_{k\geq 1} \frac{1}{(k+\frac12)^m}
\right)
\,\leq\, 2^{-m} 2 \zeta(m) \,\leq\, 2^{1-m} \zeta(2)\,.
$$
Thus, with the notation 
$ f_n(A) = \sum_{\ell=0}^{n-1} A^\ell \frac{(n+\ell-1)!}{(2\ell)!}, $
\begin{align*} \xi^\pm_{2n,2} &\;=\; \pm\; \Re\left( \frac{ i^{n+1} }{\pi^{n+1}} f_n\left(-i \pi \right) \right) \;+\;\mathcal{R}_n\,, \end{align*}
where
$ |\mathcal{R}_n| \leq \frac{ 2\zeta(2) }{(2\pi)^{n+1}} f_n\left( 2 \pi \right) $.
The statement follows from Lemma~\ref{lmm:sumA} and
$\Re(e^{ix}) = \cos(x)$.
\end{proof}

\subsection{Extension to the formula of Getzler and Kapranov}
\label{sec:geka}
Let $\phi(n) = n \sum_{d|n} \frac{\mu(d)}{d} $ denote the \emph{Euler totient function}.
For any integer $k \geq 1$, we write $\widetilde M_k(z) = \frac{1}{k} \sum_{d|k} \phi(d) z^{k/d}$, $\widetilde L_k(z) = \log( k \widetilde M_k(z)/ z^k )$.
Getzler and Kapranov define the family of power series
\begin{align} \label{eq:PsiDefGK} \widetilde \Psi_k(z)\;=\; -\; \left( (1-\widetilde L_k(z))\, \widetilde M_k(z) \;-\; \frac{z^k}{k} \;+\; \frac{\delta_{2|k}}{2k} \right) -\;\frac12\, \widetilde L_k(z) \;+\; \sum_{m =1}^\infty \frac{B_{m+1}}{m(m+1)}\, \frac{1}{\widetilde M_k(z)^{m}}\,, \end{align}
and prove a generating function of the Euler characteristic of the associative graph complex $\AGC$:
\begin{theorem}[Theorem~9.18 of \cite{getzler1998modular}]
\label{thm:GK}
For all $k \geq 1$, $\widetilde \Psi_k(z) \in z^{-\lceil k/6 \rceil}\Q[[\frac{1}{z}]]$ and 
$$
\sum_{g \geq 2} \chi(\mathcal{AGC}^g)\, z^{1-g}
\;=\;\sum_{k,\ell \geq 1} \frac{\mu(\ell)}{\ell} \, \widetilde \Psi_k(z^\ell)\,.
$$
\end{theorem}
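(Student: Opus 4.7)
The plan is to derive Theorem~\ref{thm:GK} by transcribing Getzler and Kapranov's original Theorem~9.18, whose right-hand side is presented in terms of a cycle-index generating series $\widetilde Z$ over ribbon graphs, into the compact form stated here. The appearance of the Euler totient $\phi$ in $\widetilde M_k(z) = \frac{1}{k}\sum_{d\mid k}\phi(d)z^{k/d}$ is the direct analog of the Möbius function in $M_k(z)$: commutative graphs treat the half-edges at a vertex as a symmetric set, so the vertex-counting polynomial $V$ of Eq.~\eqref{eq:defV} involves $\mu$ via Möbius inversion on the symmetric-group cycle index, whereas associative (ribbon) graphs treat the half-edges at a vertex as a cyclic set, and the number of generators of the cyclic group $C_d$ is precisely $\phi(d)$. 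Passing from Getzler-Kapranov's raw formula to the exponent $\widetilde\Psi_k^\pm$ requires the same Möbius inversion step that was used in the final calculation of the proof of Theorem~\ref{thm:genfun}, collapsing $\sum_{\ell\geq 1}\frac{\mu(\ell)}{\ell}\widetilde\Psi_k^\pm(z^\ell)$ to a clean generating function via $\sum_{d\mid m}\mu(d)=\delta_{m,1}$.

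Concretely, I would first verify the generating function identity by rewriting Getzler-Kapranov's formula (which is a sum of connected-graph contributions encoded through a logarithm of a partition-function-like series) into the shape of the $\widetilde\Psi_k^\pm$ plus a prefactor of $\mu(\ell)/\ell$, exactly matching the manipulation at the end of the proof of Theorem~\ref{thm:genfun}. The pieces $(1-\widetilde L_k)\widetilde M_k - z^k/k + \delta_{2|k}/(2k)$, $-\frac12 \widetilde L_k$, and the Bernoulli tail correspond term-by-term to the edge prefactor, Gaussian normalization, and Stirling expansion contributions; since in the Getzler-Kapranov formalism these same three ingredients appear and only the vertex contribution differs by the replacement $\mu\rightsquigarrow\phi$, the match will be straightforward.

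For the polynomial bound $\widetilde\Psi_k^\pm(z)\in z^{-\lceil k/6\rceil}\Q[[1/z]]$, I would run the argument of Lemma~\ref{lmm:Ganalytic} verbatim. The only inputs used there are that $M_k(z)$ has leading term $z^k/k$ and that its subleading coefficients are bounded in absolute value; both hold for $\widetilde M_k$ since $|\phi(d)|\leq d$ implies $|k u^k \widetilde M_k(1/u)-1|\leq k|u|^{k/2}(1+|u|+\cdots)$ on a small disc, yielding the analog of Lemma~\ref{eq:Mkpos2}. The cancellation producing the $u^{\lceil k/6\rceil}$ bound then comes from the same three-fold telescoping in $W_k=1-1/(k u^k \widetilde M_k(1/u))$ against the $u^{-k}$ prefactor.

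The main obstacle will be bookkeeping: Getzler and Kapranov's original formulation is written in an operadic/symmetric-function language rather than in the explicit $\widetilde M_k,\widetilde L_k$ variables, so carefully tracking the Möbius/totient substitution through their formula to see that it reduces to the expression on the right-hand side here is where errors would creep in. Once that dictionary is fixed, both claims follow mechanically from the parallel structure with Theorem~\ref{thm:genfun}.
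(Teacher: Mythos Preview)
The paper does not give its own proof of this theorem: it is stated as a cited result (Theorem~9.18 of \cite{getzler1998modular}) and used as a black box in the subsequent proof of Theorem~\ref{thm:eulerAGC}. There is therefore no in-paper argument to compare your proposal against.

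That said, your plan is a sensible reconstruction of why the present formulation should agree with Getzler and Kapranov's original. Two remarks. First, the claim $\widetilde\Psi_k^\pm(z)\in z^{-\lceil k/6\rceil}\Q[[1/z]]$ is not obviously part of the cited Theorem~9.18 as originally stated; your idea of rerunning Lemma~\ref{lmm:Ganalytic} with $\widetilde M_k$ in place of $M_k$ is exactly how the paper would justify it if pressed, and the coefficient bound $|\phi(d)|\leq d$ you note is enough to make the analog of Lemma~\ref{eq:Mkpos2} go through on a slightly smaller disc. Second, your explanation of the $\mu\rightsquigarrow\phi$ substitution via cyclic versus symmetric vertex structure is the correct conceptual picture, and it matches the paper's later remark (in the proof of Theorem~\ref{thm:eulerAGC}) that the only asymptotically relevant change is $\mu(2)=-1$ becoming $\phi(2)=1$.
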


\begin{proof}[Proof of Theorem~\ref{thm:eulerAGC}]
Based on Theorem~\ref{thm:GK} instead of Theorem~\ref{thm:genfun}, the proof works completely analogously to the proof of Theorems~\ref{thm:eulerGCeven} and~\ref{thm:eulerGCodd}.
The only significant difference is that $\phi(1) = \phi(2) = 1$ instead of $\mu(1)=1$ and $\mu(2)=-1$. This modification only leads to a sign change that is not significant for the leading asymptotic term. Specifically, in the proof of  Proposition~\ref{prop:xi22asy}, the quantity $1/(1+x)$ instead of $1/(1-x)$ needs to be expanded. Eventually, only the even coefficients of these expansions are relevant in both cases.
\end{proof}

\appendix

\section{On the \texorpdfstring{$\SG_n$}{Sn}-equivariant top weight Euler characteristic of \texorpdfstring{$\M_{g,n}$}{Mgn}}
\label{sec:appendix}

The following appendix is a reproduction of a 2008 letter by Don Zagier to Carel Faber with a couple of modifications to fit the context of the present paper.
The letter discusses the \hbox{$\SG_n$-equivariant} top weight Euler characteristic of $\M_{g,n}$.
In it, Zagier used computations by Faber to conjecture a first, rather complicated formula for this Euler characteristic. Chan, Faber, Galatius, and Payne recently proved this conjectured formula as their Theorem~1.1 in~\cite{chan2019s_n} (see their Remark~1.9 for further details on this formula's history).
In his letter, Zagier continues to derive a second, simpler formula. 
This second, previously unpublished formula can easily be seen to reduce to the even (i.e.,~the $\GC_+$) case of Theorem~\ref{thm:genfun} if one sets \hbox{$n=0$}.

To provide further context for the following letter, we briefly explain the interpretation of both formulas in relation to $\M_{g,n}$.
The top weight cohomology of $\M_{g,n}$ can be decomposed into irreducible representations of the symmetric group $\SG_n$ that acts by permuting the marked points:
$$\Gr^W_{6g-6+2n} H^k(\M_{g,n};\Q) \;\iso\; \bigoplus_{\lambda \parts n}\, c^k_\lambda \,V_\lambda \qquad \text{ for all } k \in \{0,1,\ldots \},$$
where $c^k_\lambda$ is the multiplicity of the irreducible representation $V_\lambda$ indexed by the integer partition $\lambda$ of size $n$.  %
The $\SG_n$-equivariant top weight Euler characteristic of $\M_{g,n}$ is the element of the ring of formal symmetric power series, $\widehat \Lambda = \lim_{\leftarrow n} \Q[[x_1,\ldots,x_n]]^{\SG_n}$, given by $A_g = \sum_{k,\lambda} (-1)^k c^{k}_\lambda\, s_\lambda,$ 
where $s_\lambda \in \Q[x_1,\ldots,x_n]^{\SG_n}$ is the Schur polynomial associated to $\lambda.$
A key observation made by Carel Faber in 2008 is that, for $g\geq 2$, $A_g$ is a rational function in $P_i = 1+p_i$
with $p_i = \sum_j x_j^i \in \widehat \Lambda$. %

\section*{Formulas for the \texorpdfstring{$\SG_n$}{Sn}-equivariant top weight Euler characteristics of \texorpdfstring{$\M_{g,n}$}{Mgn} \texorpdfstring{\\[4pt]}{-} appendix by Don Zagier}

\def\={\;=\;} 
\def\Q{\Bbb Q} 
\def\B{\mathcal B} 
\def\A{\mathcal A} 
\def\R{\mathcal R} \def\h{\hbar}

\subsection*{Rule for the coefficients}
For $2\le g\le8$ Faber found explicit formulas
giving $A_g$ as an element of the ring $\R=\Q[P_1^{\pm1},P_2^{\pm1},\dots]$, homogeneous of degree $1-g$,
where $\,\deg(P_i)=i$, e.g.
$$ A_2\=\frac12\,\frac{P_1}{P_2}-\,\frac1{12}\,\frac1{P_1}\,-\,\frac16\,\frac{P_1^2}{P_3}
   \,-\,\frac1{12}\,\frac{P_1^3}{P_2^2}\,-\,\frac16\,\frac{P_2P_3}{P_6}\,.$$
By studying these formulas, one can eventually guess the rule: the monomials occurring in $A_g$ are those of the form
\begin{align} \frac{P_{d_1}^{a_1}\,\cdots\,P_{d_s}^{a_s}}{P_m^k} \label{a1} \end{align}
with $m$, $k$, $d_i$ and $a_i$ ($1\le i\le s$) satisfying
\begin{align} \begin{cases} & m\ge1,\quad 0<d_1<\cdots<d_s<m,\quad d_i|m, \\
 & k\ge1,\quad a_i\ge1,\quad r:=k+1-a_1-\cdots-a_s\ge0,\\ & km-a_1d_1-\cdots-a_sd_s=g-1\,, \end{cases} \label{a2} \end{align}
and the coefficient of the monomial \eqref{a1} is equal to
\begin{align} \frac{(-1)^{k-r}(k-1)!\,B_r}{r!\,a_1!\,\cdots\,a_s!}\;\mu(m/d_1)^{a_1}\cdots\mu(m/d_s)^{a_s}\; m^{r-1}\!\prod_{p|(m,d_1,\dots,d_s)}\Bigl(1\,-\,\frac1{p^r}\Bigr)\,, \label{a3} \end{align}
where $B_r$ denotes the $r$th Bernoulli number.  %
Notice that the coefficient~\eqref{a2} vanishes if $r$ is odd and greater than~1, or if any $m/d_i$ is not square-free,
or if $r=0$ and the numbers $m,d_1,\dots,d_s$ have a nontrivial common factor, so that not all of the monomials~\eqref{a1}
subject to the conditions~\eqref{a2} actually occur.

The formula for $A_g$ described in equations \eqref{a1}--\eqref{a3} was recently proven to hold for all $g \geq 2$ (Theorem~1.1~of~\cite{chan2019s_n}).

\subsection*{Formula for the generating function}
If we write the product $\prod_{p|(m,d_1,\dots,d_s)}(1-p^{-r})$ in~\eqref{a3} as $\sum_{d|(m,d_1,\dots,d_s)}\mu(d)/d^r$
and furthermore denote $a_1+\cdots+a_s$ by $a$ and observe that the multinomial coefficient $a!/a_1!\cdots a_s!$
counts the number of ways of writing the monomial $P_{d_1}^{a_1}\cdots P_{d_s}^{a_s}$ as an ordered product
$P_{\ell_1}\cdots P_{\ell_a}$ with $\ell_i|m$, we can express the rule given above in terms of the generating function
\begin{align} \A\=-\,\frac14\,\frac{P_1^{\,2}}{P_2}\,+\,\sum_{g=2}^\infty A_g\,\h^{g-1}\;\in\;\R[[\h]] \label{a4} \end{align}
as
\begin{align} \A\= \sum_{m\ge1}\sum_{d|m}\frac{\mu(d)}d\,\sum_{\substack{ a,\,r\ge0\\ a+r\ge2}}\frac{(-1)^{a+1}(a+r-2)!\,B_r}{r!\,a!}\, \bigl(\frac md\bigr)^{r-1}\,\biggl(\sum_{ \substack{ \ell|m\\  d|\ell \\ \ell\ne m}} \mu\bigl(\frac m\ell\bigr)\,\frac{P_\ell}{\h^\ell}\biggr)^a \,\biggl(\frac{\h^m}{P_m}\biggr)^{a+r-1}\,. \label{a5} \end{align}
This formula can be rewritten nicely in the following way.  Define
\begin{align*} H(x,y)\=\sum_{\substack{ a,\,r\ge0\\ a+r\ge2}}\frac{(-1)^{a+1}(a+r-2)!\,B_r}{r!\,a!}\,x^{r-1}\,y^a\quad\in\;\frac1x\,\Q[[x,y]]\,,  \end{align*}
so that \eqref{a5} becomes
\begin{align} \A\= \sum_{m\ge1}\sum_{d|m}\frac{\mu(d)}d\,H\biggl(\frac md\,\frac{\h^m}{P_m},\; \frac{\h^m}{P_m} \sum_{\substack{ \ell|m\\  d|\ell \\ \ell\ne m}} \mu\bigl(\frac m\ell\bigr)\,\frac{P_\ell}{\h^\ell} \biggr) \,. \label{a7} \end{align}
The Laurent series $H(x,y)$ can be expressed in terms of functions of a single variable by the formula 
\begin{align} H(x,y)\=\frac{y\,-\,(1+y)\log(1+y)}x\,-\,\frac12\,\log(1+y)\,-\,\B\Bigl(\frac x{1+y}\Bigr)\,, \label{a8} \end{align}
where
\begin{align*} \B(x)\=\sum_{r=2}^\infty\frac{B_r}{r(r-1)}\,x^{r-1}\=\frac x{12}\,-\,\frac{x^3}{360}\,+\,\frac{x^5}{1260}\,-\,\cdots\quad\in\Q[[x]]  \end{align*}
(essentially the generating function of virtual Euler characteristics of moduli spaces of curves).
The power series $\B(x)$ occurs in Stirling's formula, 
\begin{align*} \log\Gamma(z)\,\sim\,\Bigl(z-\frac12\Bigr)\log z\,-\,z\,+\,\frac12\log(2\pi)\,+\,\B\Bigl(\frac1z\Bigr)\qquad(z\to\infty)\,,  \end{align*}
and, as a consequence of this, or of the standard recursion formula for the Bernoulli numbers (see~\cite{AIK14}), satisfies the functional equation
\begin{align*} \B(x)\,-\,\B\Bigl(\frac x{1+x}\Bigr)\=\frac{2+x}{2\,x}\log(1+x)\,-\,1 \qquad\biggl(\,=\,\sum_{n=1}^\infty\frac{(n-1)\,(-x)^n}{2n(n+1)}\biggr)  \end{align*}
which determines it uniquely in $x\,\Q[[x]]$.  This functional equation together with~\eqref{a8} implies the functional equation
\begin{align*} H(x,\,y-x)\,-\,H(x,\,y)\=\log(1+y) \end{align*}
for the Laurent series $H(x,y)$, which is uniquely determined by this property and the boundary condition $H(x,0)=-\B(x)$.

Substituting \eqref{a8} into \eqref{a7} while using Theorem 1.1 of \cite{chan2019s_n} in the form of \eqref{a4} results in the following formula:
\begin{formula}[Corollary to Theorem 1.1 in \cite{chan2019s_n}]
The $\SG_n$-equivariant top weight Euler characteristics of $\M_{g,n}$ 
are given by the generating series 
$$ \sum_{g=2}^\infty A_g\,\h^{g-1}\=\sum_{k,\,d\ge1}\frac{\mu(d)}d\, \psi_d(H_k)\,,  $$
where $\psi_d\,:\,\mathcal R[[\h]]\to \mathcal R[[\h]]$ is the ring homomorphism sending $P_i$ to $P_{di}$ and $\h$ to $\h^{d}$,
$H_k$ is defined by
$$ H_k\= ( 1\;-\; \lambda_k )\, \xi_k\;-\;\frac{P_k}{k \h^k } 
\;+\;\frac1{2k}\,\frac{P_{k/2}^{\,2}}{P_k}
\;-\; \frac12\, \lambda_k
 \;-\;
\mathcal B\bigl({1}/{\xi_k}\bigr)\;\in\; \h^{\lceil k/6\rceil} \mathcal R[[\h]]\,,$$
with $P_{k/2}=0$ for $k$ odd, and $\xi_k \in \R[1/\h]$ and $\lambda_k\in \R[[\h]]$ are defined by
$$ \xi_k\=\frac{1}{k}\,\sum_{d|k}\,\mu(k/d)\, P_d/\h^d,\qquad
\lambda_k\= \log\bigl( k\hskip 1pt \h^k\hskip 1pt \xi_k / P_k \bigr).
$$
\end{formula}
Setting $P_i=1$ for $i=1,2,\ldots$ in the above formula amounts to specializing to the top weight Euler characteristic of $\M_g$. 
The resulting formula is equivalent to the one stated in Theorem~\ref{thm:genfun} in the even ($+$) case. 
This is consistent with the results of \cite{CGP} which imply that the top weight Euler characteristic of $\M_g$ equals $\chi(\GC_+^g)$ as explained in Section~\ref{sec:MG}.

\providecommand{\bysame}{\leavevmode\hbox to3em{\hrulefill}\thinspace}
\providecommand{\MR}{\relax\ifhmode\unskip\space\fi MR }
\providecommand{\MRhref}[2]{%
  \href{http://www.ams.org/mathscinet-getitem?mr=#1}{#2}
}
\providecommand{\href}[2]{#2}

\end{document}